\numberwithin{equation}{section}
\theoremstyle{plain}
\newtheorem{theorem}{Theorem}[section]
\newtheorem{lemma}[theorem]{Lemma}
\newtheorem{proposition}[theorem]{Proposition}
\newtheorem{corollary}[theorem]{Corollary}
\newtheorem{definition}[theorem]{Definition}
\theoremstyle{remark}
{
    
    \newtheorem{remark}[theorem]{Remark}

}
\newcommand{\norm}[2]{\left\lVert#1\right\rVert_{#2}}
\newcommand{\cU}{\mathcal{U}}
\newcommand{\A}{\hat{A}}
\newcommand{\B}{\hat{B}}
\newcommand{\hw}{\hat{w}}
\newcommand{\Y}{\mathcal Y}
\newcommand{\Uad}{\mathcal{U}_{ad}}
\newcommand{\xiz}{\xi}
\newcommand{\hatQ}{\widehat \Q}
\newcommand{\id}{{\rm id}}
\def\dd{{\rm d}}
\newcommand{\ddt}{\frac{\rm d}{{\rm d} t} }
\def\weight(#1,#2){c_{#1,#2}}
\def\ra{{\rm ra}}
\def\hh{\hat{h}}
\def\ph{\hat{p}}
\def\uh{\hat{u}}
\def\wh{\hat{w}}
\def\yh{\hat{y}}
\def\cala{{\mathcal  A}}
\def\calb{{\mathcal B}}
\def\cald{{\mathcal D}}
\def\calh{{\mathcal H}}
\def\call{{\mathcal L}}
\def\calt{{\mathcal T}}
\def\calu{{\mathcal U}}
\def\caly{{\mathcal Y}}
\def\cF{{\mathcal F}}
\def\cR{{\mathcal R}}
\def\A{\mathcal{A}}
\def\B{\mathcal{B}}
\def\H{\mathcal{H}}
\def\L{\mathcal{L}}
\def\Q{\mathcal{Q}}
\def\T{\mathcal{T}}
\def\U{\mathcal{U}}
\def\Y{\mathcal{Y}}
\def\eps{\varepsilon}
\def\Om{{\Omega}}
\def\om{{\omega}}
\def\gamh{\hat{\gamma}}
\def\psih{{\hat\psi}}
\def\Psih{{\hat\Psi}}
\def\Psih{{\hat\Psi}}
\def\xih{{\hat\xi}}
\def\Psih{{\hat\Psi}}
\def\Psih{{\hat\Psi}}
\def\1B{{\bf  1}}
\def\dist{\mathop{\rm dist}}
\def\ddiv{\mathop{\rm div}}
\def\dom{\mathop{{\rm dom}}}
\def\Min{\mathop{\rm Min}}
\def\dPsi{\mathop{\delta \Psi}}
\def\half{\mbox{$\frac{1}{2}$}}
\def\1B{{\bf  1}}
\newcommand{\NN}{\mathbb{N}}
\newcommand{\RR}{\mathbb{R}}
\def\cN{\mathbb{N}}
\def\cR{\mathbb{R}}
\newcommand\be{\begin{equation}}
\newcommand\ee{\end{equation}}
\newcommand\ba{\begin{array}}
\newcommand\ea{\end{array}}
\newcommand{\bea}{\begin{eqnarray}}
\newcommand{\eea}{\end{eqnarray}}
\newcommand{\bean}{\begin{eqnarray*}}
\newcommand{\eean}{\end{eqnarray*}}
\def\rar{\rightarrow}
\def\disp{\displaystyle}
\def\la{\langle}
\def\ra{\rangle}
\title[Optimal control of infinite dimensional bilinear systems]
{Optimal control of infinite dimensional \\ bilinear systems:
Application to\\ the heat and wave equations}
\author{M. Soledad Aronna}
\address{EMAp/FGV, Rio de Janeiro 22250-900, Brazil}
\email{soledad.aronna@fgv.br}
\author{Fr\'ed\'eric Bonnans}
\address{INRIA-Saclay and Centre de 
Math\'ematiques Appliqu\'ees, Ecole Polytechnique, 91128 Palaiseau, France}
\email{Frederic.Bonnans@inria.fr}  
\author{Axel Kr\"oner}
\address{INRIA-Saclay and Centre de 
Math\'ematiques Appliqu\'ees, Ecole Polytechnique, 91128 Palaiseau, France}
\email{Axel.Kroener@inria.fr}  
\thanks{This article will appear in {\em Mathematical Programming}.}
\begin{document}

\maketitle


\begin{abstract}
  In this paper we consider second order optimality conditions for a bilinear optimal control
  problem governed by a strongly continuous semigroup operator,
the control entering linearly in the cost function.
We derive first and second order optimality conditions, taking
advantage of the Goh transform. We then apply the results to the
 heat and wave equations. 
\end{abstract}

\vspace{5mm}

{\sc\small Keywords}: \keywords{\small
Optimal control, partial differential equations, 
second-order optimality conditions, Goh transform, 
semigroup theory, heat equation, wave equation, 
bilinear control systems. } 

\section{Introduction}
In this paper we derive no gap second order optimality conditions for optimal control problems governed by a bilinear system  being affine-linear in the control and with pointwise constraints on the control; more precisely for a Banach space $\calh$ we consider optimal control problems for equations of type
\be
\label{semg1}
\dot\Psi + \cala \Psi = f + u (\B_1 + \B_2\Psi); 
\;\; t\in (0,T); \quad \Psi(0)=\Psi_0,
\ee
where  $\A$ is the 
generator of a strongly continuous semigroup on $ \calh$, and
\be
\label{semg1-hyp}
\Psi_0\in \calh;\;\;  f\in L^1(0,T;\calh);\;\; 
\B_1\in \calh;\;\; 
u\in L^1(0,T);\;\; 
\B_2\in \L(\calh).
\ee
This general framework includes in particular optimal control problems for the bilinear heat and wave equations.
 
Optimal control problems which are affine-linear in the control are important when addressing problems with $L^1$-control costs. 
However, for affine-linear control problems, the classical techniques of the calculus of variations do not lead to the formulation of second order sufficient optimality conditions. This problem has been studied in the context of optimal control of ordinary differential equations (ODEs) based on the Legendre condition by Kelly \cite{Kel64}, Goh \cite{Goh66a}, Dmitruk \cite{Dmi77,MR914861}, Poggiolini and Stefani \cite{MR2472886}, Aronna et al.
\cite{ABDL12}, and Frankowska and Tonon
\cite{frankowska:hal-01067270}; the case of additional state
constraints was considered in Aronna et al. \cite{AronnaBonnansGoh2016}. 
In the context of optimal control of PDEs there exist only a few
papers on sufficient optimality conditions for affine-linear control
problems, see Bergounioux and Tiba \cite{MR1377719}, Tr\"oltzsch
\cite{MR2144184}, Bonnans and Tiba \cite{MR2656166}, 
who discuss generalized bang-bang control.
Bonnans \cite{Bonnans:2013} discussed singular arcs
in the framework of semilinear parabolic equations.
Let us also mention the results on second order necessary or
sufficient
conditions by Casas \cite{MR2974742} (for the elliptc case),
Casas and Tr\"oltzsch (review paper \cite{MR3311948}),
Casas, Ryll and Tr\"oltzsch 
({F}itz{H}ugh-{N}agumo equation \cite{MR3374636}).

Further, for optimal control of semigroups, the reader is referred to
Li et al.~\cite{LiYao:1985,LiYong:1991}, Fattorini et al. 
\cite{FattoriniFrankowska:1991,MR2158806} 
and Goldberg and Tr\"oltzsch \cite{MR1227544}.

The contribution of this paper is to derive sufficient second order
optimality condition using the Goh transform \cite{Goh66a}. We
generalize ideas in \cite{Bonnans:2013} 
to the case of bilinear systems, in a semigroup setting. A general framework is
presented which allows to obtain sufficient optimality conditions
under very general hypotheses. We verify additionally that these conditions are
satisfied in the case of control of
the heat and wave equations. We also discuss the case of
a general diagonalizable operator.
In the companion paper \cite{aronna:hal-01311421},
we wil extend these results to the case of complex spaces, 
with an application to the Schr{\"o}dinger equation.

The paper is organized as follows. 
Section \ref{sec:abset} presents the abstract control problem in
a semigroup setting and establishes some basic calculus rules.
Necessary second order optimality conditions are presented in 
Section \ref{sec:soocg}.
Sufficient ones are the subject of Section \ref{sec-suffici}.
Applications to the control of the heat equation and wave  equation
are presented in Section \ref{sec:applschr}.

\vspace{5pt}

\noindent {\bf Notation.}
Given a Banach space $\H$, with norm 
$\|\cdot \|_\H$, we denote by $\H^*$ its
topological dual and by $\la h^*,h\ra_\H$ the duality
product between $h\in \H$ and $h^*\in \H^*$. 
We omit the index $\H$ if there is no ambiguity.
If $\A$ is a linear (possibly unbounded) operator 
from $\H$ into itself, its adjoint operator is denoted by $\A^*$.
We let $|\cdot|$ denote the Euclidean norm and $AC(0,T)$ the space of absolutely continuous functions over $[0,T]$. By $\|\cdot\|_p$, for 
$p \in [1,\infty]$, we mean by default the norm of $L^p(0,T)$. 

\section{The abstract control problem in
a semigroup setting}\label{sec:abset}

\subsection{Semigroup setting}\label{sec:absetsese}

Let $\calh$ be a reflexive Banach space.
Consider the abstract differential equation 
\eqref{semg1} 
with data satisfying \eqref{semg1-hyp},
the unbounded operator $\A$ over $\H$ 
being the generator of a 
(strongly) continuous semigroup denoted by $e^{-t\A}$,
such that 
\be\label{cond:bounded}
\| e^{-t \A}\|_{\L(\calh)} \leq c_\A e^{\lambda_\A t},\quad t>0,
\ee
for some positive $c_\A$ and $\lambda_\A$.
Thus (\cite[Ch. 1, Cor. 2.5]{MR710486})  
$\A$ is a closed operator and has dense domain defined by
\be
\label{def-domA}
\dom(\A) := \left\{ y\in \H; \;\; \lim_{t\downarrow 0} 
\frac{e^{-t\A}y-y}{t} \;\text{exists} \right\}
\ee
and, for $y\in \dom(\A)$: 
\be
\label{def-domAa}
\A y = - \lim_{t\downarrow 0} \frac{e^{-t\A}y-y}{t}.
\ee
We define the {\em mild solution} of
\eqref{semg1} as the function
$\Psi\in C(0,T;\calh)$ such that,
for all $t\in [0,T]$:
\be
\label{semPsi}
\Psi(t) = e^{-t\A}\Psi_0 + 
\int_0^t e^{-(t-s)\A} \big( f(s)+ u(s) (\B_1+ \B_2 \Psi(s)) \big) \dd s.
\ee 
This fixed-point equation has a unique solution in 
$C(0,T;\H)$.
Indeed, letting $\calt(\Psi)(t)$ denote
the r.h.s. of \eqref{semPsi}, we see that 
$\calt$ is a continuous mapping from
$C(0,T;\calh)$ into itself, and that given 
$\Psi^1,\Psi^2$ in that space
we have that 
\be
\calt(\Psi^1)(t) - \calt(\Psi^2)(t)
= 
\int_0^t e^{-(t-s)\A}  u(s) \B_2 \Big(\Psi^1(s)-\Psi^2(s)\Big) \dd s.
\ee
For $t$ small enough, this is a contracting operator
and, by induction, we deduce that this equation is
well-posed.
We let $\Psi[u]$ denote the unique solution of \eqref{semPsi} for each $u\in L^1(0,T).$

We recall that the adjoint of $\A$ is defined as
follows:
its domain is 
\be
\dom(\A^*) := \{ \varphi\in \H^*; \;\; \text{for some $c>0$:}\;\;
| \la \varphi, A y \ra | \leq c \|y\|, \;\; \text{for all $y\in \dom(\A)$}\},
\ee
so that $y\mapsto \la \varphi, A y \ra$
has a unique extension to a linear continuous form over $\H$,
which by the definition is $\A^*\varphi$. 
This allows to define weak solutions \cite{Ball:1977}:

\begin{definition}
\label{ThmAC-def}
We say that $\Psi\in C(0,T;\H)$ is a
{\em  weak solution} of \eqref{semg1}
if  $\Psi(0)= \Psi_0$ and, for any 
$\phi \in \dom(\A^*)$,  the function $t\mapsto \la \phi , \Psi(t)\ra$ 
is absolutely continuous over $[0,T]$ and satisfies
\begin{align}
\label{weak_ball}
\ddt\la \phi , \Psi(t)\ra+\la \A^*\phi , \Psi(t)\ra 
= \la \phi , f+u(t)(\B_1+\B_2 \Psi(t))\ra,
\;\; \text{for a.a. $t\in [0,T]$.}
\end{align}
\end{definition}

We recall the following result,  see \cite{Ball:1977}:

\begin{theorem}
\label{weaksense-thm}
Let $\A$ be the generator of a strongly continuous semigroup.
Then there is a unique weak solution of \eqref{weak_ball} that coincides with the
mild solution.
\end{theorem}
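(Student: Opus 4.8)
The plan is to reduce the bilinear problem to a linear inhomogeneous one by freezing its right-hand side, and then to establish the equivalence of weak and mild solutions for that linear problem (this is Ball's equivalence \cite{Ball:1977}). Throughout I use that $\H$ is reflexive: this guarantees that $\A^*$ itself generates the strongly continuous adjoint semigroup, that $(e^{-t\A})^*=e^{-t\A^*}$, and that $\dom(\A^*)$ is dense in $\H^*$. (In a non-reflexive space the adjoint semigroup would only be weak-$*$ continuous and $\dom(\A^*)$ need not be dense, which is exactly why reflexivity was assumed.) Given any weak solution $\Psi\in C(0,T;\H)$ of \eqref{weak_ball}, set $g(t):=f(t)+u(t)(\B_1+\B_2\Psi(t))$; since $f\in L^1(0,T;\H)$, $u\in L^1(0,T)$, $\B_1\in\H$, $\B_2\in\L(\H)$ and $\Psi$ is continuous hence bounded, we have $g\in L^1(0,T;\H)$. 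Then $\Psi$ is a weak solution of the linear Cauchy problem $\dot\Psi+\A\Psi=g$, $\Psi(0)=\Psi_0$, and the entire difficulty is shifted to that linear problem.

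First I would show that the mild solution is a weak solution. Fix $\phi\in\dom(\A^*)$. Using $(e^{-t\A})^*=e^{-t\A^*}$ one writes $\la\phi,\Psi(t)\ra=\la e^{-t\A^*}\phi,\Psi_0\ra+\int_0^t\la e^{-(t-s)\A^*}\phi,g(s)\ra\,\dd s$. Because $\phi\in\dom(\A^*)$, the orbit $s\mapsto e^{-s\A^*}\phi$ is $C^1$ in $\H^*$ with derivative $-\A^* e^{-s\A^*}\phi$; inserting $e^{-\sigma\A^*}\phi=\phi-\int_0^\sigma\A^* e^{-r\A^*}\phi\,\dd r$ into both terms and applying Fubini, one obtains the integrated identity $\la\phi,\Psi(t)\ra-\la\phi,\Psi_0\ra=\int_0^t\big(-\la\A^*\phi,\Psi(\tau)\ra+\la\phi,g(\tau)\ra\big)\,\dd\tau$. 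This shows $t\mapsto\la\phi,\Psi(t)\ra$ is absolutely continuous and recovers \eqref{weak_ball}, so the mild solution (which exists and is unique by the fixed-point argument following \eqref{semPsi}) is indeed a weak solution.

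The converse --- that every weak solution coincides with the mild solution --- is the main obstacle, since it forces the use of the \emph{time-dependent} test function $v(s):=e^{-(t-s)\A^*}\phi$. My plan is to avoid a formal product rule and argue by discretization. Fix $t\in(0,T]$ and $\phi\in\dom(\A^*)$, take a partition $0=s_0<\dots<s_N=t$, and telescope $\la\phi,\Psi(t)\ra-\la e^{-t\A^*}\phi,\Psi_0\ra=\sum_k\big(\la v(s_{k+1}),\Psi(s_{k+1})\ra-\la v(s_k),\Psi(s_k)\ra\big)$. Splitting each summand as $\la v(s_{k+1}),\Psi(s_{k+1})-\Psi(s_k)\ra+\la v(s_{k+1})-v(s_k),\Psi(s_k)\ra$, I would evaluate the first piece by the integrated weak equation applied with the fixed test function $v(s_{k+1})\in\dom(\A^*)$, and the second piece by $v(s_{k+1})-v(s_k)=\int_{s_k}^{s_{k+1}}\A^* e^{-(t-r)\A^*}\phi\,\dd r$. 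As the mesh tends to $0$, the two occurrences of $\la\A^* e^{-(t-r)\A^*}\phi,\Psi(r)\ra$ cancel and what survives is $\la\phi,\Psi(t)\ra=\la e^{-t\A^*}\phi,\Psi_0\ra+\int_0^t\la e^{-(t-r)\A^*}\phi,g(r)\ra\,\dd r=\la\phi,\,e^{-t\A}\Psi_0+\int_0^t e^{-(t-r)\A}g(r)\,\dd r\ra$. The delicate point, and where I expect the real work, is the justification of these limits: it rests on the $\H^*$-continuity of $r\mapsto e^{-(t-r)\A^*}\phi$ and of $r\mapsto\A^* e^{-(t-r)\A^*}\phi=e^{-(t-r)\A^*}\A^*\phi$ (both continuous because the adjoint semigroup is strongly continuous and $\phi,\A^*\phi\in\H^*$), together with the boundedness of $\Psi$ and the $L^1$-bound on $g$, which provide the dominating functions.

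Since $\dom(\A^*)$ is dense in $\H^*$, the last identity determines $\Psi(t)$ uniquely and shows $\Psi(t)=e^{-t\A}\Psi_0+\int_0^t e^{-(t-r)\A}g(r)\,\dd r$ for every $t$; that is, $\Psi$ satisfies the fixed-point equation \eqref{semPsi}. As \eqref{semPsi} has a unique solution in $C(0,T;\H)$, the weak solution is unique and equals the mild solution --- and note that the bilinear coupling through $\B_2\Psi$ needs no separate Gronwall estimate, because freezing $g$ turns coincidence with \eqref{semPsi} into the conclusion directly.
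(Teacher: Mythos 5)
Your proof is correct, but note that the paper itself contains no proof of this statement: it is recalled as a known result with a citation to \cite{Ball:1977}, so what you have produced is a self-contained argument where the paper defers entirely to the literature. Your route is essentially a reconstruction of Ball's equivalence theorem: the reduction by freezing $g(t):=f(t)+u(t)(\B_1+\B_2\Psi(t))\in L^1(0,T;\H)$ correctly decouples the bilinear structure (so that no Gronwall argument is needed, exactly as you say, since coincidence with the fixed-point equation \eqref{semPsi} is itself the conclusion); the mild-implies-weak direction via the identity $e^{-\sigma\A^*}\phi=\phi-\int_0^\sigma e^{-r\A^*}\A^*\phi\,\dd r$ and Fubini is sound; and the weak-implies-mild direction is the genuinely delicate step, where your telescoping Riemann-sum argument is the right way to make rigorous the formal product rule for $s\mapsto\la e^{-(t-s)\A^*}\phi,\Psi(s)\ra$ --- the weak-solution definition \eqref{weak_ball} only allows \emph{fixed} test functions in $\dom(\A^*)$, so a discretization (or a mollification in time) is indeed required, and your limit justifications via uniform continuity of the adjoint orbits and of $\Psi$, plus the $L^1$ bound on $g$, are the correct dominating ingredients. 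One small correction of emphasis: reflexivity of $\H$ is not actually needed for this equivalence. Ball's theorem holds in an arbitrary Banach space, because $\dom(\A^*)$ is always weak-$*$ dense in $\H^*$, and weak-$*$ density suffices at your final step (you need $\la\phi,x\ra=0$ for all $\phi\in\dom(\A^*)$ to force $x=0$ for a \emph{fixed} $x\in\H$, which only requires that $\dom(\A^*)$ separate points of $\H$). The paper's reflexivity assumption is made for other purposes, namely to ensure in Section 2.3 that $\A^*$ generates a strongly continuous dual semigroup; under that standing assumption your use of strong continuity of $e^{-t\A^*}$ is of course legitimate and simplifies the limit arguments.
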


So in the sequel we can use any of the two equivalent formulations 
\eqref{semPsi} or \eqref{weak_ball}.

Let us set 
$\zeta (t) := w(t) y(t)$, 
where $w$ is a primitive of $v\in L^1(0,T)$
such that $w(0)=0$,  
and $y\in C(0,T;\H)$ is a mild solution
for some $b\in L^1(0,T;\calh)$:
\be
\label{geneq}
\dot y + \A y =  b
\ee

\begin{corollary}
\label{crprod.lc1}
Let $y$, $w$ be as above. Then 
$\zeta:= wy$ is a mild solution of
\be
\label{crprod1}
\dot \zeta + \A \zeta = v y + w b.
\ee 
\end{corollary}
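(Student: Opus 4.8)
The plan is to exploit the equivalence between mild and weak solutions furnished by Theorem~\ref{weaksense-thm}: rather than manipulating the integral formula \eqref{semPsi} directly, I would verify that $\zeta := wy$ is a \emph{weak} solution of \eqref{crprod1} in the sense of Definition~\ref{ThmAC-def}, and then invoke uniqueness to conclude that it is also the mild solution. First I would record the elementary facts that make this meaningful: $w$ is absolutely continuous on $[0,T]$ (being a primitive of $v\in L^1(0,T)$) and hence bounded, so together with $y\in C(0,T;\H)$ one gets $\zeta\in C(0,T;\H)$ and $\zeta(0)=w(0)y(0)=0$; moreover the source $vy+wb$ lies in $L^1(0,T;\H)$, since $v\in L^1(0,T)$ with $y$ bounded, and $w$ is bounded with $b\in L^1(0,T;\H)$.

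The core step is the test against an arbitrary $\phi\in\dom(\A^*)$. Since $y$ is a weak solution of \eqref{geneq}, the scalar map $t\mapsto\la\phi,y(t)\ra$ is absolutely continuous and satisfies $\ddt\la\phi,y(t)\ra = \la\phi,b(t)\ra-\la\A^*\phi,y(t)\ra$ a.e. Writing $\la\phi,\zeta(t)\ra = w(t)\la\phi,y(t)\ra$ as a product of two scalar absolutely continuous functions, I would apply the Leibniz rule to obtain, a.e.,
\[
\ddt\la\phi,\zeta(t)\ra = v(t)\la\phi,y(t)\ra + w(t)\big(\la\phi,b(t)\ra-\la\A^*\phi,y(t)\ra\big).
\]
Using the scalar-linearity identities $w(t)\la\A^*\phi,y(t)\ra=\la\A^*\phi,\zeta(t)\ra$ and $w(t)\la\phi,b(t)\ra=\la\phi,w(t)b(t)\ra$, this rearranges to $\ddt\la\phi,\zeta(t)\ra+\la\A^*\phi,\zeta(t)\ra=\la\phi,v(t)y(t)+w(t)b(t)\ra$, which is precisely \eqref{weak_ball} for $\zeta$ with source $vy+wb$. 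Hence $\zeta$ is a weak solution of \eqref{crprod1}, and by Theorem~\ref{weaksense-thm} it coincides with the unique mild solution, proving the claim.

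The only genuinely analytic point, and the step I would treat most carefully, is the justification that the product of the two scalar absolutely continuous functions $w$ and $t\mapsto\la\phi,y(t)\ra$ is again absolutely continuous with the product rule valid almost everywhere. This is a standard real-variable fact on a compact interval, so no regularity of $y$ beyond continuity (and no smoothness of $v$) is required; everything else is linear bookkeeping in the duality pairing. An alternative would be to work directly from \eqref{semPsi}, inserting the mild formula for $y$ and reorganizing the resulting double integral by Fubini together with an integration by parts against $w$, but that route is computationally heavier and ultimately reproves the same product rule, so I would favor the weak-solution argument above.
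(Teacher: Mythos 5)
Your proof is correct and follows essentially the same route as the paper: the paper likewise tests $\zeta$ against an arbitrary $\phi\in\dom(\A^*)$, uses the product rule for absolutely continuous scalar functions to verify the weak formulation \eqref{weak_ball}, and then invokes Theorem~\ref{weaksense-thm} to identify $\zeta$ with the mild solution. Your additional bookkeeping (continuity of $\zeta$, $\zeta(0)=0$, integrability of $vy+wb$) is a sound, if implicit in the paper, elaboration of the same argument.
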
 

\begin{proof}
Observe that a product of absolutely continuous
functions is absolutely continuous with the usual
formula for the derivative of the product. So, 
given $\varphi \in \dom(\A^*)$, the function $t \mapsto
\la\varphi,\zeta(t) \ra=w(t) \la\varphi,y(t)\ra$
is absolutely continuous and satisfies 
\be
\ddt \la\varphi,\zeta\ra + \la \A^* \varphi,\zeta\ra
= v \la\varphi,y\ra + w \left(\ddt \la\varphi,y\ra
+  \la \A^* \varphi,y\ra \right)
= v \la\varphi,y\ra + w \la\varphi,b\ra 
\ee
meaning that $\zeta$ is solution of \eqref{crprod1}
in a weak sense. The conclusion follows
with Theorem \ref{weaksense-thm}.
\end{proof}

 \begin{theorem}[Basic estimate]
 \label{Gronwall}
There exists $\gamma>0$ not depending on $(f,u)$
such that 
the solution $\Psi$ of \eqref{semg1} satisfies 
 \be
 \label{semest}
\|\Psi\|_{C([0,T];\H)} \leq 
\gamma \left(  \| \Psi_0\|_\H + \|f\|_{L^1(0,T;\H)} + \|\B_1\|_\H \|u\|_1
\right) e^{\gamma \|u\|_1}.
 \ee
 \end{theorem}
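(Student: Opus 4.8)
The plan is to read off the bound directly from the mild formulation \eqref{semPsi} and the semigroup estimate \eqref{cond:bounded}, closing the argument with Gronwall's lemma. Fix $t\in[0,T]$ and take the $\H$-norm in \eqref{semPsi}. Since $0\le s\le t\le T$ and $\lambda_\A>0$ we have $\|e^{-(t-s)\A}\|_{\L(\H)}\le c_\A e^{\lambda_\A T}=:M$ uniformly, and likewise $\|e^{-t\A}\|_{\L(\H)}\le M$. Bounding the integrand by $\|f(s)\|_\H+|u(s)|\big(\|\B_1\|_\H+\|\B_2\|_{\L(\H)}\|\Psi(s)\|_\H\big)$ and pulling the constant $M$ out of the integral gives
\[
\|\Psi(t)\|_\H \le M\Big(\|\Psi_0\|_\H+\|f\|_{L^1(0,T;\H)}+\|\B_1\|_\H\|u\|_1\Big) + M\|\B_2\|_{\L(\H)}\int_0^t |u(s)|\,\|\Psi(s)\|_\H\,\dd s.
\]

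Writing $\phi(t):=\|\Psi(t)\|_\H$ and $a:=M(\|\Psi_0\|_\H+\|f\|_{L^1(0,T;\H)}+\|\B_1\|_\H\|u\|_1)$, which is a nonnegative constant independent of $t$, this reads $\phi(t)\le a + M\|\B_2\|_{\L(\H)}\int_0^t |u(s)|\phi(s)\,\dd s$. Since $\Psi\in C([0,T];\H)$ the function $\phi$ is continuous, and the weight $|u|$ lies in $L^1(0,T)$, so I would apply the integral form of Gronwall's inequality with an $L^1$ kernel to obtain $\phi(t)\le a\exp\big(M\|\B_2\|_{\L(\H)}\int_0^t|u(s)|\,\dd s\big)\le a\exp(M\|\B_2\|_{\L(\H)}\|u\|_1)$ for every $t\in[0,T]$.

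Taking the supremum over $t$ and setting $\gamma:=\max\{M,\,M\|\B_2\|_{\L(\H)}\}$ — which depends only on $c_\A$, $\lambda_\A$, $T$ and $\|\B_2\|_{\L(\H)}$, hence not on $(f,u)$ — yields $\|\Psi\|_{C([0,T];\H)}\le a\,e^{M\|\B_2\|_{\L(\H)}\|u\|_1}\le \gamma(\|\Psi_0\|_\H+\|f\|_{L^1(0,T;\H)}+\|\B_1\|_\H\|u\|_1)\,e^{\gamma\|u\|_1}$, which is the claim. The only genuinely delicate point is the Gronwall step: because $u$ is merely $L^1$ the coefficient is unbounded, so one must invoke the generalized (integral-kernel) version of the lemma rather than the classical one with a continuous coefficient; this is routine once one notes that $\phi$ is continuous and $t\mapsto\int_0^t|u|$ is absolutely continuous. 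Everything else is a mechanical application of the triangle inequality and \eqref{cond:bounded}.
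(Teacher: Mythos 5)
Your proof is correct and follows essentially the same route as the paper: take norms in the mild formulation \eqref{semPsi}, bound the semigroup factors uniformly via \eqref{cond:bounded} (using $\lambda_\A>0$ and $t\le T$), and close with the Gronwall inequality for an $L^1$ kernel — which is exactly the version the paper invokes ($\theta\in L^1$, $a\in L^\infty$). The only cosmetic difference is that the paper retains the weights $e^{-\lambda_\A s}$ inside the integrals before applying Gronwall, whereas you absorb everything into the single constant $M=c_\A e^{\lambda_\A T}$ at the outset; both yield the same constant structure and the same conclusion.
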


 \begin{proof}
 From equation \eqref{semPsi} we get
\be
\begin{aligned}
  \|\Psi(t) \|_\H  \leq & c_\A e^{\lambda_\A t}\|\Psi_0 \|_\H
 + c_\A \int_0^t e^{\lambda_\A(t-s)} \Big( \|f(s)\|_\H + \|\B_1\|_\H |u(s)| \Big) \dd s \\
 & + c_\A e^{\lambda_\A T} \|\B_2\|_{\L(\H)} \int_0^t e^{-\lambda_\A s}  |u(s)| \| \Psi(s)\|_\H \dd s,
 \end{aligned}
 \ee
We conclude with the following Gronwall's inequality:
if $\theta\in L^1(0,T)$ and $a\in L^\infty(0,T)$, then
\be
a(t) \leq \delta + \int_0^t \theta(s) a(s) \dd s
\quad \text{implies}\quad 
a(t) \leq \delta e^{\int_0^t \theta(s) \dd s}.
\ee
 \end{proof}

 The control and state spaces are, respectively,
\be
\calu:=L^1(0,T); \quad \caly:=C(0,T;\calh).
\ee
For $s\in [1,\infty]$ we set 
$\calu_s := L^s(0,T)$.
Let $\uh \in \calu$ be given and $\Psih$ solution of \eqref{semg1}.
The linearized equation at $(\Psih,\uh)$,  to be understood in the 
mild or weak sense, is
\be
\label{semg5lin}
\dot z(t) + \A z(t) = \uh(t) \B_2 z(t) + v(t) (\B_1+\B_2\Psih(t));
\quad z(0)=0,
\ee
where $v\in \U.$ In view of the previous analysis,
for given $v\in \U,$ the equation \eqref{semg5lin} 
has a unique solution that we refer as $z[v].$

\begin{theorem}
\label{diffstateqeu}
The mapping 
$u\mapsto \Psi[u]$ 
(mild solution of \eqref{semPsi})
from $\U$ to $\Y$ is of class $C^\infty$ and we have that 
\be \label{equ:DPsi}
D\Psi[u]v= z[v],\quad \forall v\in \U.
\ee
\end{theorem}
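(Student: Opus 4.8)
The plan is to realize $\Psi[\cdot]$ as an implicit function and apply the implicit function theorem in Banach spaces. Following the fixed-point structure of \eqref{semPsi}, I would define $F\colon \U\times\Y\to\Y$ by
\be
F(u,\Psi)(t):=\Psi(t)-e^{-t\A}\Psi_0-\int_0^t e^{-(t-s)\A}\big(f(s)+u(s)(\B_1+\B_2\Psi(s))\big)\,\dd s,
\ee
so that, by the well-posedness established just after \eqref{semPsi}, the identity $F(u,\Psi)=0$ characterizes $\Psi=\Psi[u]$ uniquely. The first observation is that $F$ is a continuous polynomial of degree two in $(u,\Psi)$: it is the sum of a continuous affine map and the single bilinear term $(u,\Psi)\mapsto\int_0^t e^{-(t-s)\A}u(s)\B_2\Psi(s)\,\dd s$. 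Since a continuous bilinear map between Banach spaces is automatically $C^\infty$ (with vanishing derivatives of order $\ge 3$), $F$ will be of class $C^\infty$ once that bilinear term is shown to map $\U\times\Y$ continuously into $\Y$.

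I would verify this continuity using \eqref{cond:bounded}: for $0\le s\le t\le T$ one has $\|e^{-(t-s)\A}\|_{\L(\H)}\le c_\A e^{\lambda_\A T}$, whence
\be
\Big\|\int_0^t e^{-(t-s)\A}u(s)\B_2\Psi(s)\,\dd s\Big\|_\H\le c_\A e^{\lambda_\A T}\|\B_2\|_{\L(\H)}\,\|u\|_1\,\|\Psi\|_\Y,
\ee
which yields boundedness; continuity of $t\mapsto\int_0^t e^{-(t-s)\A}u(s)\B_2\Psi(s)\,\dd s$ into $\H$, needed so that the image lies in $\Y$, follows from the strong continuity of the semigroup together with dominated convergence.

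It remains to check that $D_\Psi F(u,\Psi)$ is a toplinear isomorphism of $\Y$. Differentiating in $\Psi$ gives $D_\Psi F(u,\Psi)\zeta=(I-K_u)\zeta$, where $K_u\zeta(t):=\int_0^t e^{-(t-s)\A}u(s)\B_2\zeta(s)\,\dd s$ is a Volterra operator. Iterating the estimate above yields the factorial bound $\|K_u^{\,n}\|_{\L(\Y)}\le\big(c_\A e^{\lambda_\A T}\|\B_2\|_{\L(\H)}\|u\|_1\big)^n/n!$, so the Neumann series $\sum_{n\ge0}K_u^{\,n}$ converges and $I-K_u$ is boundedly invertible. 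The implicit function theorem then shows that $u\mapsto\Psi[u]$ is $C^\infty$. To identify the first derivative, I would differentiate $F(u,\Psi[u])=0$ in the direction $v$, obtaining $(I-K_u)\,D\Psi[u]v=-D_uF(u,\Psi[u])v$ with $D_uF(u,\Psi)v(t)=-\int_0^t e^{-(t-s)\A}v(s)(\B_1+\B_2\Psi(s))\,\dd s$; hence $z:=D\Psi[u]v$ satisfies $z(t)=\int_0^t e^{-(t-s)\A}\big(u(s)\B_2 z(s)+v(s)(\B_1+\B_2\Psi(s))\big)\,\dd s$, which is precisely the mild form of the linearized equation \eqref{semg5lin} (with $\uh=u$ and $\Psih=\Psi[u]$). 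By its uniqueness, $z=z[v]$, giving \eqref{equ:DPsi}.

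The structural content is light: smoothness of $F$ is automatic for a quadratic map and the solvability of the linearized equation is already in hand, so the only genuine computation is the factorial decay of $\|K_u^{\,n}\|$ underlying the invertibility of $D_\Psi F$, which I expect to be the main (though routine) technical step. Care is also needed to confirm that the convolution term indeed takes values in $C(0,T;\H)$ rather than merely $L^\infty(0,T;\H)$, which is where the strong continuity of the semigroup is essential.
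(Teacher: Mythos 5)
Your proposal is correct and follows essentially the same route as the paper: both apply the implicit function theorem to the fixed-point map defined by \eqref{semPsi}, obtain smoothness from the fact that this map is affine plus a continuous bilinear term, and establish invertibility of the partial derivative in $\Psi$ via the Volterra structure (the paper invokes the contraction/induction argument from the start of the section, while you give the equivalent Neumann-series bound $\|K_u^{\,n}\|\le C^n\|u\|_1^n/n!$). Your explicit identification of $D\Psi[u]v$ with $z[v]$ by implicit differentiation just spells out what the paper compresses into ``The conclusion follows.''
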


\begin{proof}
In order to prove differentiability of the mapping $u\mapsto \Psi[u],$ we apply the Implicit Function Theorem
to the mapping
 $\cF\colon \U \times \caly
\rightarrow \caly \times \calh$
defined by
\be
\cF(u,\Psi):=\left( 
\Psi - e^{-t\A}\Psi_0 - 
\int_0^t e^{-(t-s)\A} \big( f(s) + u(s) (\B_1+ \B_2 \Psi(s)) \big) \dd s,
\Psi(0) \right).
\ee
This bilinear and continuous mapping is of class $C^\infty$
and it is easily checked that 
$\cF_{\Psi}(u,\Psi)$ 
is an isomorphism, that is, the linear equation 
\be
\label{semg5linz}
z - e^{-t\A}z_0 - 
\int_0^t e^{-(t-s)\A} u(s) \B_2 z(s) \dd s =g,
\quad z(0)=z_0
\ee
has, for any $(g,z_0) \in C(0,T; \calh)\times \calh$,
a unique solution $z$ in $C(0,T; \calh)$,
as can be deduced from the fixed-point argument
in the beginning of the section. 
The conclusion follows.
\end{proof}

\subsection{Regularity of the solution}\label{sec:regsol}

The above result may allow to prove
higher regularity results. 
\begin{definition}[Restriction property]\label{def:restr_prop}
Let $E$ be a Banach space, with norm denoted by
$\|\cdot\|_{E}$ with continuous inclusion in $\H$. 
Assume that the restriction of $e^{-t\A}$ to $E$ 
has image in $E$, and that it is 
a continuous semigroup over this space.
We let $\A'$ denote its associated generator,
and $e^{-t\A'}$ the associated semigroup.
By \eqref{def-domA}-\eqref{def-domAa},
we have that 
\be
\label{def-domAw}
\dom(\A') := \left\{ y\in E; \;\; \lim_{t\downarrow 0} 
\frac{e^{-t\A}y-y}{t} \;\text{belongs to $E$} \right\}
\ee
so that $\dom (\A') \subset \dom (\A)$,
and $\A'$ is the restriction of $\A$ to 
$\dom(\A')$. We have that 
\be
\label{cond:bounded2}
\| e^{-t \A'}\|_{\L(E)} \leq c_{\A'} e^{\lambda_{\A'} t}.
\ee
for some constants 
$c_{\A'}$ and $\lambda_{\A'}$.
Assume that $\B_1 \in E$, 
and denote by $\B'_2$ the restriction of $\B_2$ 
to  $E$, which is supposed to have image in 
$E$ and to be continuous in the topology of $E$,
that is,
\be
\label{lem-reg.l1-1}
\B_1 \in E; \quad \B'_2 \in \L(E). 
\ee
In this case we say that $E$ has the 
{\em restriction property}. 
\end{definition}

\begin{lemma}
\label{lem-reg.l1}
Let $E$ have the restriction property, 
$\Psi_0 \in E$, and $f\in L^1(0,T;E)$ hold.
Then 
$\Psi\in C(0,T;E)$ and the mapping 
$u\mapsto\Psi[u]$ is of class $C^\infty$ from
$L^1(0,T)$ to  $C(0,T;E)$.
\end{lemma}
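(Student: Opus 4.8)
The plan is to observe that Definition \ref{def:restr_prop} endows $E$ with exactly the same structural hypotheses that $\calh$ enjoys in the basic setting, and then to replay on $E$ the two arguments already carried out on $\calh$. First I would set up the problem on $E$: by \eqref{cond:bounded2} the restricted family $e^{-t\A'}$ is a strongly continuous semigroup on $E$ satisfying the growth bound that played the role of \eqref{cond:bounded} on $\calh$, and together with $\Psi_0\in E$, $f\in L^1(0,T;E)$, and $\B_1\in E$, $\B'_2\in\L(E)$ from \eqref{lem-reg.l1-1}, the data of the $E$-analogue of \eqref{semg1} (with $\A'$, $\B'_2$ in place of $\A$, $\B_2$) satisfy the counterpart of \eqref{semg1-hyp}. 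Hence the fixed-point computation at the beginning of Section \ref{sec:abset} applies verbatim with $\calh$ replaced by $E$: the map $\calt_E$ sending $\Phi\in C(0,T;E)$ to the right-hand side of the $E$-mild equation is, for $t$ small, a contraction, and by the usual induction over subintervals it admits a unique fixed point $\Psi_E\in C(0,T;E)$.

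The central step is to identify $\Psi_E$ with $\Psi$. Since the inclusion $E\hookrightarrow\calh$ is continuous and commutes with Bochner integration, and since $e^{-t\A'}$ and $\B'_2$ are by definition the restrictions to $E$ of $e^{-t\A}$ and $\B_2$, the integrand $f(s)+u(s)(\B_1+\B'_2\Psi_E(s))$ lies in $E$ and is mapped identically by $e^{-(t-s)\A'}$ and $e^{-(t-s)\A}$; therefore $\Psi_E$, regarded as an element of $C(0,T;\calh)$, satisfies the mild equation \eqref{semPsi} unchanged. By the uniqueness of the mild solution in $C(0,T;\calh)$ established at the start of Section \ref{sec:abset}, we conclude $\Psi_E=\Psi$, so in particular $\Psi\in C(0,T;E)$. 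This identification is the only genuinely non-routine point: it is where the two spaces interact, and it hinges precisely on the compatibility built into the restriction property, namely that the restricted semigroup and operators turn an $E$-mild solution into a bona fide $\calh$-mild solution, so that uniqueness in $C(0,T;\calh)$ may be invoked.

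Finally, for the $C^\infty$ regularity of $u\mapsto\Psi[u]$ as a map into $C(0,T;E)$, I would repeat the proof of Theorem \ref{diffstateqeu} on $E$. Define $\cF_E\colon \U\times C(0,T;E)\to C(0,T;E)\times E$ by the same formula as $\cF$ but with $\A'$, $\B'_2$; it is again bilinear and continuous, hence of class $C^\infty$. Its partial derivative $(\cF_E)_\Psi(u,\Psi)$ is an isomorphism because the $E$-version of the linear equation \eqref{semg5linz} is uniquely solvable in $C(0,T;E)$ by the same contraction argument used on $\calh$. The Implicit Function Theorem then yields that $u\mapsto\Psi[u]$ is $C^\infty$ from $L^1(0,T)$ into $C(0,T;E)$, which completes the proof. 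Everything outside the identification step is thus a transcription of arguments already given for $\calh$.
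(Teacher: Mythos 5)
Your proposal is correct and is essentially the paper's own argument: the paper's proof consists of the single line ``This follows from the semigroup theory applied to the generator $\A'$,'' and your three steps (fixed-point solvability in $C(0,T;E)$, identification of the $E$-mild solution with the $\calh$-mild solution via continuity of the inclusion and uniqueness in $C(0,T;\calh)$, and the Implicit Function Theorem argument of Theorem \ref{diffstateqeu} transcribed to $E$) are precisely the details that line compresses. Your explicit treatment of the identification step, which the paper leaves implicit, is a sound and worthwhile elaboration rather than a departure.
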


\begin{proof}
This follows from the semigroup theory 
applied to the generator $\A'$. 
\end{proof} 

\begin{remark}
\label{lem-reg.l1-r}
In view of \cite[Thm. 2.4]{MR710486}
the above Lemma applies 
with $E = \dom(\A)$.
\end{remark}

\subsection{Dual semigroup}\label{sec:absetdual}
Since $\H$ is a reflexive Banach space it is known, e.g. 
\cite[Ch. 1, Cor. 10.6]{MR710486}  
that
$\A^*$ generates another strongly continuous
semigroup called the {\em dual (backward) semigroup} on $\H^*$,
denoted by $e^{-t \A^*}$, which satisfies 
\be
\label{etatransp}
(e^{-t \A})^* = e^{-t \A^*}. 
\ee
Let $(y,p)$ be solution of the
forward-backward system 
\be
\label{yp}
\left\{\ba{rcl}
{\rm (i)} & 
\dot y + \A y &= ay + b,
\\ {\rm (ii)} & 
-\dot p + \A^*p &=a^* p + g,
\ea\right.\ee
where 
\be
\label{yp-bis}
\left\{\ba{l}
b\in L^1(0,T;\H), \\
g\in L^1(0,T;\H^*),\\
a \in L^\infty(0,T;\L(\H)),
\ea\right.\ee
and for a.a. $t\in (0,T)$, $a^*(t) \in \L(\H^*)$ is the adjoint operator of
$a(t)\in \L(\H)$,
so that $a^*\in L^\infty(0,T;\L(\H^*))$.

The mild solutions $y\in C(0,T;\H)$, $p\in C(0,T;\H^*)$
of \eqref{yp}, satisfy
for a.a. $t\in (0,T)$: 
\be
\label{pqform6}
\left\{ \begin{aligned}
{\rm (i)}\,\, & y(t) = e^{-t \A} y(0) +  \int_0^t e^{-(t-s) \A} (a(s)y(s)+b(s)) \dd s,
\\
{\rm (ii)} \,\,&  p(t) = e^{-(T-t) \A^*} p(T) +  \int_t^T e^{-(s-t) \A^*} (a^*(s)p(s)+g(s)) \dd s.
\end{aligned}
\right.
\ee
We have the integration by parts (IBP) Lemma:

\begin{lemma}\label{lem:IBP1}
 \label{pqform}
Let $(y,p) \in C(0,T;\H) \times C(0,T;\H^*) $ satisfy
\eqref{yp}-\eqref{yp-bis}. Then,
\be
\label{pqform0}
\la p(T), y(T)\ra+ \int_0^T \la g(t), y(t)\ra \dd t 
=
\la p(0), y(0)\ra + 
\int_0^T \la p(t), b(t)\ra \dd t.
\ee
\end{lemma}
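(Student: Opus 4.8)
The plan is to justify the formal identity $\tfrac{\dd}{\dd t}\la p(t),y(t)\ra=\la p(t),b(t)\ra-\la g(t),y(t)\ra$ and integrate it over $[0,T]$. Formally, differentiating the bilinear pairing and inserting the two equations of \eqref{yp} gives $\la\dot p,y\ra+\la p,\dot y\ra=\la\A^*p-a^*p-g,y\ra+\la p,-\A y+ay+b\ra$; here the terms $\la\A^*p,y\ra$ and $\la p,\A y\ra$ cancel by the definition of the adjoint, the terms $\la a^*p,y\ra$ and $\la p,ay\ra$ cancel since $a^*(t)$ is the adjoint of the bounded operator $a(t)$, and only $\la p,b\ra-\la g,y\ra$ survives; integrating and rearranging yields exactly \eqref{pqform0}. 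The obstacle is that this manipulation is meaningless for mild solutions: in general $y(t)\notin\dom(\A)$ and $p(t)\notin\dom(\A^*)$, so $\la\A^*p,y\ra$ and $\la p,\A y\ra$ are undefined, and $t\mapsto\la p(t),y(t)\ra$ is a priori only continuous, not absolutely continuous. The entire difficulty is concentrated in legitimizing this one cancellation.

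To make the argument rigorous I would regularize by the resolvent (Yosida) operators. For $n>\lambda_\A$, estimate \eqref{cond:bounded} guarantees that $R_n:=n(nI+\A)^{-1}$ is a bounded operator on $\H$ with norm uniformly bounded in $n$, that $R_n\to I$ strongly, that $R_n$ commutes with $e^{-t\A}$, maps $\H$ into $\dom(\A)$, and satisfies $\A R_n=n(I-R_n)\in\L(\H)$; by reflexivity $R_n^*=n(nI+\A^*)^{-1}$ has the analogous properties on $\H^*$ relative to the dual semigroup $e^{-t\A^*}$ of \eqref{etatransp}. Set $y_n:=R_ny$ and $p_n:=R_n^*p$. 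Applying $R_n$ to the mild formula \eqref{pqform6}(i), which commutes with both the semigroup and the integral, shows that $y_n$ is valued in $\dom(\A)$, is absolutely continuous in time, and satisfies $\dot y_n+\A y_n=R_n(ay+b)$ a.e.; symmetrically, $p_n$ is valued in $\dom(\A^*)$, absolutely continuous, and satisfies $-\dot p_n+\A^*p_n=R_n^*(a^*p+g)$ a.e.

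Now $t\mapsto\la p_n(t),y_n(t)\ra$ is a genuine product of absolutely continuous Banach-space-valued functions, hence absolutely continuous, and its derivative may be computed termwise exactly as in Corollary \ref{crprod.lc1}. Because $y_n(t)\in\dom(\A)$ and $p_n(t)\in\dom(\A^*)$, the cancellation $\la\A^*p_n,y_n\ra=\la p_n,\A y_n\ra$ is now legitimate, and integrating over $[0,T]$ gives
\[
\la p_n(T),y_n(T)\ra-\la p_n(0),y_n(0)\ra=\int_0^T\big(\la p_n,R_n(ay+b)\ra-\la R_n^*(a^*p+g),y_n\ra\big)\,\dd t.
\]

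It then remains to pass to the limit $n\to\infty$. Since $y$ and $p$ are continuous, their ranges are compact, so the strong convergences $R_n\to I$ and $R_n^*\to I$ together with the uniform operator bounds upgrade to uniform convergence $y_n\to y$ in $C(0,T;\H)$ and $p_n\to p$ in $C(0,T;\H^*)$; in particular the four boundary terms converge to the desired ones. For the integrals, $R_n(ay+b)\to ay+b$ in $L^1(0,T;\H)$ and $R_n^*(a^*p+g)\to a^*p+g$ in $L^1(0,T;\H^*)$ by dominated convergence, the dominating functions being independent of $n$ thanks to $a\in L^\infty(0,T;\L(\H))$, $b\in L^1$, $g\in L^1$ and the uniform bound on $R_n,R_n^*$; hence both integrals pass to the limit. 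In the limit I obtain $\la p(T),y(T)\ra-\la p(0),y(0)\ra=\int_0^T(\la p,ay+b\ra-\la a^*p+g,y\ra)\,\dd t$, and the pointwise identity $\la p(t),a(t)y(t)\ra=\la a^*(t)p(t),y(t)\ra$ cancels the $a$-terms, leaving precisely \eqref{pqform0}. The main technical point throughout is the regularization step that makes the adjoint cancellation meaningful; the passage to the limit is then only uniform convergence on compacts plus dominated convergence.
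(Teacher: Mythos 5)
Your proof is correct, but it follows a genuinely different route from the paper's. The paper never differentiates anything and never regularizes: it first adds $\int_0^T \la a^*(t)p(t),y(t)\ra \dd t = \int_0^T \la p(t),a(t)y(t)\ra \dd t$ to both sides of \eqref{pqform0}, then substitutes the mild representation \eqref{pqform6}(i) of $y$ into the terms $\la p(T),y(T)\ra$ and $\la a^*(t)p(t)+g(t),y(t)\ra$, moves the semigroup across the pairing via \eqref{etatransp}, and after an application of Fubini's theorem recognizes the backward mild formula \eqref{pqform6}(ii) for $p$; the identity thus falls out as a purely algebraic consequence of the two fixed-point equations, entirely within the mild formulation. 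Your argument instead rigorizes the formal product rule by Yosida regularization: you replace $(y,p)$ by $(R_n y, R_n^* p)$, differentiate the pairing, exploit the now-legitimate adjoint cancellation, and pass to the limit. Both are sound. What the paper's approach buys is economy: no resolvent machinery, no need to know when a mild solution is strict, and no limit passage beyond Fubini. What your approach buys is transparency (the identity is visibly the integrated product rule) and portability to other settings where one has a good regularizing family. One point worth emphasizing is that you correctly supplied the ingredient on which such regularization arguments usually founder: a mild solution with merely $L^1(0,T;\H)$ inhomogeneity and domain-valued initial datum need not be a strict solution, so the claim that $\dot y_n + \A y_n = R_n(ay+b)$ holds a.e.\ genuinely requires your observation that $\A R_n = n(I-R_n)\in\L(\H)$, whence $R_n(ay+b)\in L^1(0,T;\dom(\A))$ for the graph norm; had you skipped that, the proof would have a real gap.
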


\begin{proof}
Adding $\int_0^T \la a^*(t)p(t),y(t)\ra \dd t = \int_0^T \la p(t),a(t)y(t)\ra \dd t  $ to both sides of \eqref{pqform0}, we get the equivalent equation
\be
\label{equiv}
\la p(T), y(T)\ra+ \int_0^T \la a^*(t)p(t)+g(t), y(t)\ra \dd t 
=
\la p(0), y(0)\ra + 
\int_0^T \la p(t), a(t)y(t)+b(t)\ra \dd t.
\ee

By \eqref{pqform6}(i), we have the following expression for the first term in the l.h.s. of \eqref{equiv}:
\be
\label{pqform2}
\la p(T), y(T)\ra = 
\la e^{-T \A^*} p(T),y(0)\ra 
+ \int_0^T \la e^{-(T-s) \A^*} p(T), a(s)y(s) + b(s)\ra \dd s.
\ee
Similarly, for the integrand in the second term in the l.h.s. of \eqref{equiv} we get, in view of \eqref{pqform6}(i),
\be
\label{pqform3}
\begin{aligned}
&\la a^*(t)p(t)+ g(t), y(t)\ra \\
&= 
\la e^{-t \A^*} (a^*(t)p(t)+g(t)),y(0) \ra
\\ & \hspace{3mm}
+ \int_0^t \la e^{-(t-s) \A^*} (a^*(t)p(t)+g(t)),a(s)y(s)+ b(s) \ra \dd s.
\end{aligned}
\ee
Adding \eqref{pqform2}  and \eqref{pqform3}, and regrouping the terms we get
\be
\label{pqform4}
\la p(T), y(T)\ra + \int_0^T \la a^*(t)p(t)+ g(t), y(t)\ra \dd t = R_1+R_2,
\ee
where
\be
R_1 :=\la e^{-T\A^*} p(T),y(0) \ra+
\la  \int_0^T  e^{-t \A^*} (a^*(t)p(t)+g(t)) \dd t,y(0) \ra=\la p(0),y(0) \ra,
\ee
and $R_2$ is the remainder. Thanks to Fubini's Theorem
\be
\label{R2}
\begin{aligned}
 R_2
&=
 \int_0^T \la e^{-(T-s) \A^*}p(T) +\int_s^T e^{-(t-s)\A^*}(a^*(t)p(t)+g(t)) \dd t, a(s)y(s)+ b(s) \ra \dd s\\
 &= \int_0^T \la p(s), a(s)y(s)+ b(s) \ra \dd s
 \end{aligned}
 \ee
 From \eqref{pqform4}-\eqref{R2} we get \eqref{equiv}.
The result follows. 
\end{proof}

\begin{corollary}\label{lem:int_by_parts}
Let $(y,p)$ be as in Lemma \ref{pqform}, and $\varphi$ an absolutely continuous function over $(0,T)$. 
Then
\be
\label{lem:int_by_parts-1}
\int_0^T \dot \varphi (t) \la p(t),y(t) \big\ra \dd t
= \big[ \varphi (t) \la p(t),y(t) \ra \big]_0^T 
- \int_0^T \varphi (t) \Big( \la p(t),b(t)\ra - \la g(t),y(t)\ra  \Big) \dd t.
\ee
\end{corollary}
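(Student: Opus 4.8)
The plan is to deduce the statement from the integration-by-parts Lemma~\ref{pqform} by absorbing the scalar weight $\varphi$ into the forward variable. I set $\yt := \varphi y$ and first identify the equation it solves. Since $a\in L^\infty(0,T;\L(\H))$ and $y\in C(0,T;\H)$, the function $ay+b$ lies in $L^1(0,T;\H)$, so $y$ is a mild solution of $\dot y+\A y = ay+b$ with source in $L^1(0,T;\H)$. As $\varphi$ is absolutely continuous I write $\varphi(t)=\varphi(0)+w(t)$ with $w(t):=\int_0^t\dot\varphi(s)\,\dd s$, so that $w$ is a primitive of $v:=\dot\varphi\in L^1(0,T)$ with $w(0)=0$. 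Corollary~\ref{crprod.lc1} applied to the pair $(w,y)$ then gives that $wy$ is a mild solution of $\dot\zeta+\A\zeta=\dot\varphi\,y+w(ay+b)$, and by linearity of the mild-solution map, adding $\varphi(0)$ times the equation for $y$ yields
\be
\dot{\yt}+\A\yt = a\yt + \bt,\qquad \bt:=\dot\varphi\,y+\varphi b.
\ee

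Next I would check the integrability required to invoke Lemma~\ref{pqform} for the pair $(\yt,p)$. Indeed $\dot\varphi\in L^1(0,T)$ and $y\in C(0,T;\H)$ give $\dot\varphi\,y\in L^1(0,T;\H)$, while $\varphi$ is continuous hence bounded on $[0,T]$ and $b\in L^1(0,T;\H)$ give $\varphi b\in L^1(0,T;\H)$; thus $\bt\in L^1(0,T;\H)$. Since $p$, $a$ and $g$ are unchanged, the pair $(\yt,p)$ satisfies \eqref{yp}--\eqref{yp-bis} with $y$ replaced by $\yt$ and $b$ replaced by $\bt$.

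Applying Lemma~\ref{pqform} to $(\yt,p)$ then yields
\be
\la p(T),\yt(T)\ra+\int_0^T\la g,\yt\ra\,\dd t=\la p(0),\yt(0)\ra+\int_0^T\la p,\bt\ra\,\dd t.
\ee
I then substitute $\yt=\varphi y$ and $\bt=\dot\varphi\,y+\varphi b$, using $\la p,\dot\varphi\,y+\varphi b\ra=\dot\varphi\,\la p,y\ra+\varphi\,\la p,b\ra$, and solve for $\int_0^T\dot\varphi\,\la p,y\ra\,\dd t$. The two boundary contributions $\varphi(T)\la p(T),y(T)\ra-\varphi(0)\la p(0),y(0)\ra$ combine into $\big[\varphi\,\la p,y\ra\big]_0^T$, and the remaining two integrals give exactly $-\int_0^T\varphi\big(\la p,b\ra-\la g,y\ra\big)\,\dd t$, which is \eqref{lem:int_by_parts-1}.

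The only delicate point, and hence the main obstacle, is the justification that $\yt=\varphi y$ solves the modified equation in the mild sense when $\varphi(0)\neq 0$: Corollary~\ref{crprod.lc1} is stated for a primitive vanishing at the origin, so I split off the constant part $\varphi(0)$ and apply the corollary only to the genuinely time-dependent part $w$. Once this identification is in place, the remainder is the integrability check above together with a routine substitution and rearrangement.
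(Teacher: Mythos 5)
Your proof is correct, but it takes a genuinely different route from the paper's. The paper argues locally in time: it applies Lemma \ref{pqform} with $T$ replaced by an arbitrary $t\in(0,T)$, concluding that $h(t):=\la p(t),y(t)\ra$ is a primitive of the integrable function $\la p(t),b(t)\ra-\la g(t),y(t)\ra$ (so $h$ is absolutely continuous with this explicit derivative), and then finishes with the classical integration-by-parts formula for products of scalar absolutely continuous functions. You instead work globally: you absorb $\varphi$ into the forward variable by writing $\varphi=\varphi(0)+w$, apply Corollary \ref{crprod.lc1} to $(w,y)$ with the full source $ay+b\in L^1(0,T;\H)$, and use linearity of the mild-solution formula to conclude that $\tilde y=\varphi y$ solves $\dot{\tilde y}+\A\tilde y=a\tilde y+\dot\varphi\,y+\varphi b$; a single application of Lemma \ref{pqform} at the fixed horizon $T$, followed by substitution and rearrangement, then gives \eqref{lem:int_by_parts-1}. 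Your splitting off of the constant $\varphi(0)$ is exactly the right fix for the restriction $w(0)=0$ in Corollary \ref{crprod.lc1}, and your integrability check that $\dot\varphi\,y+\varphi b\in L^1(0,T;\H)$ (with $p$, $a$, $g$ unchanged) is what is needed to invoke the lemma. Comparing the two: the paper's proof is shorter and yields the slightly stronger intermediate fact that $t\mapsto\la p(t),y(t)\ra$ is absolutely continuous with identified derivative, but it implicitly relies on the (easy) observation that the pair $(y,p)$ restricted to $[0,t]$ still satisfies the forward--backward system with final datum $p(t)$; your argument avoids any time-localization at the price of invoking the product-rule Corollary \ref{crprod.lc1} and the linear superposition of mild solutions, neither of which the paper's proof needs.
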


\begin{proof}
By the IBP Lemma \ref{pqform}, replacing $T$ by an arbitrary 
time in $(0,T)$, we see that $h(t) := \la p(t), y(t)\ra$
is a primitive of the integrable function 
$\la p(t), b(t)\ra - \la g(t), y(t)\ra$. 
The Corollary follows then from the integration by parts formula in 
the space of absolutely continuous functions. 
\end{proof}

Given $(y,p)$ solution of \eqref{yp} and 
$B\in \L(\calh)$, set 
$\Phi(t) :=B y(t)$. 
Then $\Phi\in L^\infty(0,T;\H)$, is solution of an equation involving the {\em  operator} $ \A B - B\A.$ 
In order to defined properly the latter,
consider the following hypotheses:
\be
\label{commutator_identity-hyp}
\left\{ \ba{lll}
{\rm (i)}  \;\; 
B \dom(\A) \subset \dom(\A); \\
{\rm (ii)}  \;\; 
B^* \dom(\A^*) \subset \dom(\A^*).
\ea\right.\ee
Whenever these hypotheses hold,  we may define the operators
below, with domains $\dom(\A)$ and $\dom(\A^*),$ respectively:
\be
\left\{
\begin{split}
[\A,B] &:=\A B-B\A ,\\ 
[B^*,\A^*]&:=B^*\A^* - \A^*B^*.
\end{split}
\right.\ee
Let $E$ be a subspace  of $\H$ with norm denoted by
$\|\cdot\|_E$, and continuous
inclusion. Consider the following 
{\em bracket extension property}
\be
\label{hypEspace-EB}
\left\{ \ba{lll}
\dom(\A)  \subset E \subset \calh.
\\
\text{$[\A,B]$ has an extension by continuity over $E$,
say $\overline{[\A,B]}$.}
\ea\right.
\ee

\begin{proposition}
\label{commutator_identity.p}
{\rm (i)}
Let \eqref{commutator_identity-hyp}
hold, and consider $(y,\phi)\in \dom(\A)\times \dom(\A^*)$.
Then
$y\in \dom([B^*,\A^*]^*)$, 
$\phi\in \dom([\A,B]^*)$,  
and we have that 
\be
\label{commutator_identity}
\la \phi, [\A,B]y \ra 
=\la [B^*,\A^*]\phi,y \ra 
=\la [\A,B]^*\phi, y \ra
=\la \phi,[B^*,\A^*]^*y \ra.
\ee
{\rm (ii)}
Let in addition \eqref{hypEspace-EB} hold. Then 
\be
\label{commutator_identity2}
\la [B^*,\A^*]\phi,y \ra=\la \phi , \overline{[\A,B]}y \ra,
\quad \text{for all $y\in E$ and $\phi\in \dom(\A^*)$.}
\ee

\end{proposition}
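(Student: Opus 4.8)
The plan is to prove part (i) by a direct computation from the definitions of the two adjoint operators, and then to deduce part (ii) from part (i) by a continuity argument on $E$. For part (i), the hypotheses \eqref{commutator_identity-hyp} are precisely what make each term below meaningful: (i) ensures $By\in\dom(\A)$, so that $\A By$ is defined, while (ii) ensures $B^*\phi\in\dom(\A^*)$, so that $\A^*B^*\phi$ is defined. I would then expand $[\A,B]y=\A By-B\A y$ and transfer the operators across the duality pairing, using $\la\phi,\A\xi\ra=\la\A^*\phi,\xi\ra$ for $\xi\in\dom(\A)$ and $\phi\in\dom(\A^*)$, together with the boundedness identity $\la\phi,B\xi\ra=\la B^*\phi,\xi\ra$, to obtain
\[
\la\phi,[\A,B]y\ra=\la\A^*\phi,By\ra-\la B^*\phi,\A y\ra=\la B^*\A^*\phi,y\ra-\la\A^*B^*\phi,y\ra=\la[B^*,\A^*]\phi,y\ra,
\]
which is the first of the claimed equalities.

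The remaining statements of part (i) follow formally from the definition of the adjoint of an unbounded operator. For fixed $\phi\in\dom(\A^*)$, the identity above exhibits $y\mapsto\la\phi,[\A,B]y\ra$ as the functional $y\mapsto\la[B^*,\A^*]\phi,y\ra$ on $\dom([\A,B])=\dom(\A)$; since $[B^*,\A^*]\phi$ is a fixed element of $\H^*$, this is bounded by $\|[B^*,\A^*]\phi\|\,\|y\|$, whence $\phi\in\dom([\A,B]^*)$ and $[\A,B]^*\phi=[B^*,\A^*]\phi$. Symmetrically, for fixed $y\in\dom(\A)$ the map $\phi\mapsto\la[B^*,\A^*]\phi,y\ra$ equals $\phi\mapsto\la\phi,[\A,B]y\ra$ and is bounded by $\|[\A,B]y\|\,\|\phi\|$, so $y\in\dom([B^*,\A^*]^*)$ and, identifying $[B^*,\A^*]^*$ with an operator into $\H$ through the reflexivity of $\H$, $[B^*,\A^*]^*y=[\A,B]y$. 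Inserting these two identifications into the first equality produces the full chain \eqref{commutator_identity}.

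For part (ii), I would fix $\phi\in\dom(\A^*)$ and regard both sides of \eqref{commutator_identity2} as functionals of $y\in E$. On $\dom(\A)$ the identity already holds: part (i) gives $\la[B^*,\A^*]\phi,y\ra=\la\phi,[\A,B]y\ra$, and $\overline{[\A,B]}$ restricts to $[\A,B]$ there, so the right-hand side equals $\la\phi,\overline{[\A,B]}y\ra$. Both sides are, moreover, continuous in $y\in E$: the left because $[B^*,\A^*]\phi\in\H^*$ and the inclusion $E\hookrightarrow\H$ is continuous, the right because $\overline{[\A,B]}\colon E\to\H$ is continuous by \eqref{hypEspace-EB} and $\phi\in\H^*$. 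Two continuous linear functionals that agree on $\dom(\A)$, which is dense in $E$, coincide on all of $E$, which yields \eqref{commutator_identity2}.

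The computation is routine, so the difficulties are essentially bookkeeping rather than substance. The first is tracking which dual or bidual space each adjoint acts on, so that the identification $[B^*,\A^*]^*y=[\A,B]y$ is legitimate; this is exactly the point where the reflexivity of $\H$ must be invoked explicitly. The second, in part (ii), is that the extension-by-continuity argument presupposes both that $\dom(\A)$ is dense in $E$ and that $[\A,B]$ is bounded from $(\dom(\A),\|\cdot\|_E)$ into $\H$; I would make explicit that these are built into the bracket extension property \eqref{hypEspace-EB}, since otherwise $\overline{[\A,B]}$ would not be well defined and the passage from $\dom(\A)$ to all of $E$ would break down.
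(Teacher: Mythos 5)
Your proposal is correct and follows essentially the same route as the paper: the same four-step transfer of $\A$ and $B$ across the duality pairing for the first equality, the same adjoint-definition argument (which the paper leaves implicit with ``by the definition of the adjoint'') for the domain claims and remaining equalities, and the same density-plus-continuity passage from $\dom(\A)$ to $E$ for part (ii), which the paper phrases as taking a sequence $y_k\to y$ in $E$ and passing to the limit. Your explicit remarks on reflexivity of $\H$ and on the density of $\dom(\A)$ in $E$ being built into the bracket extension property are sound clarifications of points the paper uses tacitly.
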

\begin{proof}
 (i)
We have that 
 \be
\ba{lll}
\la \phi , [\A,B]y \ra
&=
\la \phi ,\A By \ra - \la \phi , B\A y \ra 
 = 
\la \A^*\phi, By \ra - \la B^*\phi , \A y \ra 
\\ & = 
\la B^*\A^*\phi, y \ra - \la \A^*B^*\phi , y \ra 
 = 
\la [B^*,\A^*]\phi, y \ra
\ea\ee
proving the first equality in
\eqref{commutator_identity}. 
This equality implies that 
$\phi\in \dom([\A,B]^*)$ as well as 
the second equality (by the definition of the adjoint). 
We obtain the last equality by similar arguments. 
\\ (ii)
Let $(y_k) \subset \dom(\A)$, $y_k\rar y$ in $E$. 
Then \eqref{commutator_identity}
holds for $y_k$, and passing to the limit in the first equality
we get \eqref{commutator_identity2}. 
\end{proof}

\begin{remark}
We do not have in general 
$[\A,B]^* = [B^*,\A^*]$ since the l.h.s has a domain which may be
larger than the one of $\A^*$. 
\end{remark}

Let us define $M\in \L(E,\H)$ by 
\be
\label{def-m-oper}
M y := \overline{[\A,B]} y,
\ee
so that 
$M^* \in \L(\H^*,E^*)$. 

\begin{corollary}
\label{lem:int_by_parts-1.c}
Let  \eqref{commutator_identity-hyp}
and \eqref{hypEspace-EB} hold, $(y,p)$
be solution of \eqref{yp}-\eqref{yp-bis}, and
$\varphi$ be an absolutely continuous function over $(0,T)$. 
{\rm (i)}
 Let $y\in L^1(0,T;E)$. Then
$\Phi(t)=B y(t)$ is a mild solution of
\be
\label{ypphi}
\dot \Phi + \A \Phi 
=  
B(ay+b) + My
=  
a \Phi + Bb + [B,a] \Phi + My,
\ee
and we have that 
\be
\label{lem:int_by_parts-2}
\begin{aligned}
\int_0^T &\dot \varphi (t) \la p(t),\Phi (t) \big\ra \dd t
 =   \disp
\big[ \varphi (t) \la p(t),\Phi(t) \ra \big]_0^T 
\\  &
- \int_0^T \varphi (t) \Big( \la p(t),B b + [B,a] y + 
M y (t)\ra - \la g(t),\Phi(t)\ra  \Big) \dd t.
\end{aligned}
\ee
{\rm (ii)} 
Assume that 
$E$ has the restriction property, and that
$M^* p\in L^1(0,T;\H^*)$. 
Then the following
IBP formula holds:
\be
\label{lem:int_by_parts-2tr}
\begin{aligned}
\int_0^T &\dot \varphi (t) \la p(t),\Phi (t) \big\ra \dd t
 =   \disp
\big[ \varphi (t) \la p(t),\Phi(t) \ra \big]_0^T 
\\  &
- \int_0^T \varphi (t) \Big( \la p(t),B b + [B,a] y \ra
+ 
\la M^* p(t), y (t) \ra - \la g(t),\Phi(t)\ra  \Big) \dd t.
\end{aligned}
\ee
\end{corollary}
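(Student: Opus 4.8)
The plan is to handle the two parts in turn: in (i) I first show that $\Phi=By$ satisfies the evolution equation \eqref{ypphi}, so that $(\Phi,p)$ fits the hypotheses of the integration-by-parts results already established, and then apply Corollary \ref{lem:int_by_parts} to the pair $(\Phi,p)$; in (ii) I transfer the operator $M$ from $y$ onto $p$ by duality.

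For the equation \eqref{ypphi} I would argue in the weak sense of Definition \ref{ThmAC-def}. Fix $\phi\in\dom(\A^*)$; by \eqref{commutator_identity-hyp}(ii) we have $B^*\phi\in\dom(\A^*)$, so testing the weak form of $\dot y+\A y=ay+b$ against $B^*\phi$ shows that $t\mapsto\la\phi,\Phi(t)\ra=\la B^*\phi,y(t)\ra$ is absolutely continuous with
\[
\ddt\la\phi,\Phi\ra+\la\A^*B^*\phi,y\ra=\la B^*\phi,ay+b\ra .
\]
The key step is to re-express $\la\A^*B^*\phi,y\ra$ in terms of $\Phi$: writing $\A^*B^*=B^*\A^*-[B^*,\A^*]$, using $\la B^*\A^*\phi,y\ra=\la\A^*\phi,\Phi\ra$, and invoking the commutator identity \eqref{commutator_identity2} of Proposition \ref{commutator_identity.p} (legitimate because $y(t)\in E$ for a.a.\ $t$), I get $\la[B^*,\A^*]\phi,y\ra=\la\phi,My\ra$ from \eqref{def-m-oper}. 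This yields the weak form of $\dot\Phi+\A\Phi=B(ay+b)+My$, which Theorem \ref{weaksense-thm} promotes to a mild solution; the second expression in \eqref{ypphi} is the purely algebraic identity $Bay=a\Phi+[B,a]y$.

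Once \eqref{ypphi} is available, note that $(\Phi,p)$ solves the forward-backward system \eqref{yp}-\eqref{yp-bis} with the same $a$ and $g$ and source $\tilde b:=Bb+[B,a]y+My$, which lies in $L^1(0,T;\H)$ since $B\in\L(\H)$, $a\in L^\infty(0,T;\L(\H))$, $b\in L^1(0,T;\H)$, and $My\in L^1(0,T;\H)$ by $M\in\L(E,\H)$ and $y\in L^1(0,T;E)$. Applying Corollary \ref{lem:int_by_parts} to $(\Phi,p)$ with this source gives \eqref{lem:int_by_parts-2}. For part (ii) the only term to recast is $\la p(t),My(t)\ra$: by definition of the adjoint $M^*\in\L(\H^*,E^*)$ one has $\la p(t),My(t)\ra=\la M^*p(t),y(t)\ra_{E^*,E}$ a.e., and the hypothesis $M^*p\in L^1(0,T;\H^*)$ means $M^*p(t)$ is represented by an element of $\H^*$, so that this coincides with the $\H^*$-$\H$ pairing $\la M^*p(t),y(t)\ra$ in \eqref{lem:int_by_parts-2tr} (the two dualities agreeing because $E$ is dense in $\H$, as $\dom(\A)\subset E$); the restriction property is what makes the setting of (i) available. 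Substituting into \eqref{lem:int_by_parts-2} gives \eqref{lem:int_by_parts-2tr}.

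I expect the main obstacle to be the derivation of \eqref{ypphi} in part (i): since $[\A,B]$ is unbounded on $\H$ and only extends continuously over the intermediate space $E$, the identification $\la[B^*,\A^*]\phi,y\ra=\la\phi,My\ra$ must be pushed through Proposition \ref{commutator_identity.p}, which is exactly why the domain conditions \eqref{commutator_identity-hyp}, the bracket extension \eqref{hypEspace-EB}, and the regularity $y\in L^1(0,T;E)$ are needed. In part (ii) the delicate point is the compatibility of the $E^*$-$E$ and $\H^*$-$\H$ dualities, which the integrability assumption on $M^*p$ is designed to secure.
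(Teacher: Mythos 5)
Your part (i) is correct and is essentially the paper's own argument: test the weak formulation against $B^*\phi$ for $\phi\in\dom(\A^*)$, use Proposition \ref{commutator_identity.p}(ii) to identify $\la [B^*,\A^*]\phi,y\ra$ with $\la\phi,My\ra$ (legitimate since $y(t)\in E$ a.e.), upgrade to a mild solution via Theorem \ref{weaksense-thm}, and then apply Corollary \ref{lem:int_by_parts} to the pair $(\Phi,p)$ with source $Bb+[B,a]y+My\in L^1(0,T;\H)$.

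Part (ii), however, has a genuine gap. In the corollary the hypothesis $y\in L^1(0,T;E)$ is local to item (i); item (ii) is meant to hold for an \emph{arbitrary} solution $(y,p)$ of \eqref{yp}-\eqref{yp-bis}, i.e.\ with $y$ merely in $C(0,T;\H)$. That is the entire point of replacing $\la p,My\ra$ (which requires $y(t)\in E$) by $\la M^*p,y\ra$ (which only requires $M^*p(t)\in\H^*$), and it is why the paper later applies \eqref{lem:int_by_parts-2tr} to states that are only known to be continuous with values in $\H$ — for instance $y:=\xi$ in the proof of Theorem \ref{thm:Q_Omega}, and $y:=\eta$, $y:=\delta\Psi$ in Proposition \ref{prop:expansion_w}. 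Your argument keeps the standing assumption $y\in L^1(0,T;E)$ and merely rewrites one term of \eqref{lem:int_by_parts-2} by duality, so it proves only this weaker version. The tell is that your proof never actually uses the restriction property: your closing claim that it ``makes the setting of (i) available'' is wrong, since (i) neither assumes nor needs that property.

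What is missing is a density argument, and the restriction property is precisely what drives it. Approximate the data: take $y_{0k}\in E$ with $y_{0k}\rar y(0)$ in $\H$ and $b_k\in L^1(0,T;E)$ with $b_k\rar b$ in $L^1(0,T;\H)$. The restriction property guarantees that the corresponding solutions $y_k$ of \eqref{yp}(i) lie in $C(0,T;E)$, so part (i) applies to $(b_k,y_k)$, and your duality identification (which is fine, and matches the paper's: the $E^*$--$E$ and $\H^*$--$\H$ pairings agree because $\dom(\A)\subset E$ gives dense inclusion of $E$ in $\H$) turns \eqref{lem:int_by_parts-2} into \eqref{lem:int_by_parts-2tr} for $(b_k,y_k)$. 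One then passes to the limit: $y_k\rar y$ and $\Phi_k=By_k\rar\Phi$ in $C(0,T;\H)$ by the basic estimate for the linear equation, and every term of \eqref{lem:int_by_parts-2tr} — unlike the term $\la p,My_k\ra$ in \eqref{lem:int_by_parts-2} — is continuous under this convergence, precisely because $M^*p\in L^1(0,T;\H^*)$, $g\in L^1(0,T;\H^*)$, and $[B,a]\in L^\infty(0,T;\L(\H))$. This limit passage is the substance of part (ii) that your proposal omits.
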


\begin{proof}
(i)
By Theorem \ref{weaksense-thm},
 it suffices to prove that $\Phi$
is a weak solution of \eqref{ypphi}.
Let $\phi\in \dom(\A^*)$ and set $f:=ay+b$. Then 
$\la \phi ,  \Phi (t) \ra =\la B^*\phi , y(t) \ra$ is absolutely continuous, 
and so, by \eqref{weak_ball} and the previous Proposition:
\be
\label{transbaphi}
\ba{lll }\disp
\ddt \la \phi , \Phi(t) \ra &=& \disp
\ddt \la  B^*\phi, y(t) \ra =  - \la \A^*B^*\phi , y(t)\ra + \la B^*\phi , f \ra
\\ & = &\disp
-\la B^*\A^* \phi, y(t) \ra  + 
\la [B^*,\A^*] \phi ,y(t) \ra + \la \phi , B f \ra
\\ & =& \disp
-\la  \A^* \phi , \Phi(t) \ra  + 
\la [B^*,\A^*] \phi , y(t) \ra + \la \phi , B f \ra
\\ & =& \disp
-\la \A^* \phi , \Phi(t) \ra  +M y(t)  + \la \phi , B f \ra,
\ea\ee
where we use 
Proposition \ref{commutator_identity.p}(ii)  in the last equality.
Point (i) follows.
\\ (ii)
Let $y_{0k}$ in $E$ converge to $y_0$ in $\H$, 
and $b_k \in L^1(0,T;E)$, $b_k\rar b$ in $L^1(0,T;\H)$.
Since $E$ has the restriction property, 
the associated $y_k$ belong to $C(0,T;E)$ and therefore
 \eqref{lem:int_by_parts-2}
holds for $(b_k,y_k)$. 
Since $M\in \L(E, \H)$ we have that 
\be
\int_0^T \varphi (t) \la p(t), M y(t)\ra \dd t  
=
\int_0^T \varphi (t) \la M^* p(t), y(t)\ra_E \dd t  
=
\int_0^T \varphi (t) \la M^* p(t), y(t)\ra_H \dd t  
\ee
where in the last equality we use the fact 
that $M^*p \in L^2(0,T;\H)$, and that 
since $E$ is a subspace of $\H$ with dense inclusion,
the action of $\H^*$ over $E$ can be identified to the
duality pairing in $\H$.
So, \eqref{lem:int_by_parts-2tr}  holds with $(b_k,y_k)$. 
Passing to the limit in the latter we obtain the conclusion. 
\end{proof}

\subsection{The optimal control problem}\label{sec:notation}

Let $q$ and $q_T$ be continuous quadratic forms over $\H$,
with associated symmetric and continuous 
operators 
\be
 Q, Q_T \in \L(\H,\H^*); \;\; 
q(y) :=\la Qy,y\ra; \;\; q_T(y) :=\la Q_Ty,y\ra.
\ee
Given
\be
\label{psi_d-psi-dt}
\Psi_d\in L^\infty(0,T;\H); \quad \Psi_{dT}\in\H,
\ee
we introduce the cost function
\be
\label{def-cost-fun}
J(u, \Psi) := 
\alpha \int_0^T  u(t) \dd t 
+
\half\int_0^T q( \Psi(t)-\Psi_d(t) )\dd t 
+
 \half q_T(\Psi(T)-\Psi_{dT})
\ee
with $\alpha \in \cR$.
The reduced cost is
\be
F(u) := J(u,\Psi[u]). 
\ee
The set of feasible controls is 
\be
\label{Uad}
\calu_{ad}:= \{ u\in\calu; \; u_m \leq u(t) \leq u_M \; \text{a.e. on } [0,T] \},
\ee
with $u_m<u_M$ given constants. 
The optimal control problem is 
\be\label{problem:OC}
\tag{P}
\Min_u F(u); \quad u\in \calu_{ad}.
\ee
We say that $\uh \in \calu_{ad}$ is a 
{\em minimum (resp. weak minimum)} of problem \eqref{problem:OC}
if $F(\uh) \leq F(u)$, for any 
$u\in \calu_{ad}$ 
(resp. $u\in \calu_{ad}$,
sufficiently close to $\uh$ in the norm
of $L^\infty(0,T)$). 

Given 
$(f,y_0)\in L^1(0,T;\H)\times \H$, 
denote by $y[y_0,f]$
the mild solution of 
\be
\dot y(t) + \A y(t)= f (t),\quad
t\in (0,T), 
\qquad y(0)=y_0. 
\ee
The {\em compactness hypothesis} is
\be
\label{compachyp}
\left\{ \ba{lll}
\text{For given $y_0\in \calh$,
the mapping $f\mapsto \B_2 y [y_0,f]$}
\\
\text{is compact from
$L^2(0,T;\calh)$ to $L^2(0,T;\calh)$.}
\ea\right.\ee

\if { 
Often we will need to assume that
\be
\label{compachypw}
\text{Either $\calb_2=0$ or the 
compactness hypothesis \eqref{compachyp}
holds.}
\ee
} \fi 

\begin{lemma}
\label{compactlemu}
Let \eqref{compachyp} hold. 
Then the mapping 
$u\mapsto \Psi[u]$
is sequentially
continuous  from $\calu_\infty$ endowed with the 
weak$*$ topology, to $C(0,T;\H)$  endowed with the 
weak topology.
\end{lemma}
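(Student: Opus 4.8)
The plan is to combine the subsequence principle with the compactness hypothesis \eqref{compachyp}, whose entire purpose is to upgrade weak convergence of the states into strong $L^2$ convergence of $\B_2\Psi_k$, thereby rescuing the passage to the limit in the bilinear term. Let $u_k \rightharpoonup^* \hat u$ in $\calu_\infty = L^\infty(0,T)$ and set $\Psi_k := \Psi[u_k]$, $\hat\Psi := \Psi[\hat u]$. Since convergence in a topological space may be tested via the subsequence principle (if every subsequence has a further subsequence converging to a common limit, then the whole sequence converges to it), it suffices to show that every subsequence of $(\Psi_k)$ admits a further subsequence converging to $\hat\Psi$ weakly in $C(0,T;\H)$; so I fix an arbitrary subsequence, not relabeled. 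Being weak$*$ convergent, $(u_k)$ is bounded in $L^\infty(0,T)$, hence in $L^1(0,T)$, and Theorem \ref{Gronwall} yields a uniform bound on $\|\Psi_k\|_{C(0,T;\H)}$, hence also in $L^2(0,T;\H)$.

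Next I would exploit the affine structure of the equation. Writing $h_k := u_k(\B_1 + \B_2\Psi_k)$, the mild formula \eqref{semPsi} splits $\Psi_k = y[\Psi_0,f] + y[0,h_k]$, with the first term independent of $k$. The sequence $(h_k)$ is bounded in $L^2(0,T;\H)$ since $u_k$ is bounded in $L^\infty$, $\B_1 \in \H$, and $\B_2\Psi_k$ is bounded in $L^2(0,T;\H)$. Hypothesis \eqref{compachyp} applied with $y_0=0$ says precisely that $h \mapsto \B_2\, y[0,h]$ is a compact operator on $L^2(0,T;\H)$; therefore $\{\B_2\, y[0,h_k]\}$, and hence $\{\B_2\Psi_k\}$, is relatively compact in $L^2(0,T;\H)$, and I extract a further subsequence with $\B_2\Psi_k \to \chi$ strongly in $L^2(0,T;\H)$ for some $\chi$.

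Then I would pass to the limit. The elementary product lemma---if $u_k \rightharpoonup^* \hat u$ in $L^\infty(0,T)$ and $w_k \to w$ strongly in $L^2(0,T;\H)$ with $\sup_k\|u_k\|_\infty<\infty$, then $u_k w_k \rightharpoonup \hat u\, w$ weakly in $L^2(0,T;\H)$, as seen by splitting $u_k w_k - \hat u w = u_k(w_k-w) + (u_k-\hat u)w$ and testing against $L^2(0,T;\H^*)$---gives $h_k \rightharpoonup \hat u(\B_1+\chi)$ weakly in $L^2(0,T;\H)$. Since $h \mapsto y[0,h]$ is bounded and linear from $L^2(0,T;\H)$ into both $L^2(0,T;\H)$ and $C(0,T;\H)$, it is weak-to-weak continuous, so $\Psi_k \rightharpoonup \Psi^* := y[\Psi_0,f] + y[0,\hat u(\B_1+\chi)]$ weakly in $C(0,T;\H)$, and also weakly in $L^2(0,T;\H)$. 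The latter, with weak continuity of $\B_2$ on $L^2(0,T;\H)$ and uniqueness of weak limits, forces $\chi = \B_2\Psi^*$. Substituting back, $\Psi^*$ satisfies the fixed-point identity \eqref{semPsi} with control $\hat u$, so by uniqueness of the mild solution $\Psi^*=\hat\Psi$.

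Since the extracted weak limit is always $\hat\Psi$, independently of the initial subsequence, the subsequence principle yields $\Psi_k \rightharpoonup \hat\Psi$ weakly in $C(0,T;\H)$, as claimed. The main obstacle is exactly the bilinear term $u_k\B_2\Psi_k$: a product of two merely weakly convergent sequences need not converge to the product of the limits, and \eqref{compachyp} is what supplies the strong $L^2$ convergence of $\B_2\Psi_k$ needed to close this gap. A minor technical point to verify is the boundedness of $h\mapsto y[0,h]$ from $L^2(0,T;\H)$ into $C(0,T;\H)$, which follows from \eqref{cond:bounded} together with the embedding $L^2(0,T)\hookrightarrow L^1(0,T)$ on the bounded interval.
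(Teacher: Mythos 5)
Your proof is correct and follows essentially the same route as the paper's: extract a subsequence, use the compactness hypothesis \eqref{compachyp} to obtain strong $L^2(0,T;\H)$ convergence, pass to the limit in the bilinear term $u_k\B_2\Psi_k$ via weak$*$--strong pairing, and identify the weak limit in $C(0,T;\H)$ through the mild-solution formula and uniqueness. If anything, your version is slightly more careful than the paper's own proof, which loosely asserts that $\Psi_k$ itself converges strongly in $L^2(0,T;\H)$ (the hypothesis only yields this for $\B_2\Psi_k$, which is all that is needed), whereas you correctly work with the strong limit $\chi$ of $\B_2\Psi_k$ and then identify $\chi=\B_2\Psi^*$, and you also make the subsequence principle explicit.
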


\begin{proof}
If $\calb_2=0$, the mapping $u\mapsto \Psi[u]$
is linear continuous,
and therefore weakly continuous from
 $\calu_\infty$ to  $C(0,T;\H).$ 
 
Otherwise,  for a bounded sequence $(u_k)$ in 
$\calu_\infty$ and associated sequence of states $(\Psi_k)$,  
extracting if necessary a subsequence, we have that 
$(u_k)$ weakly$*$ converges to some $\tilde u$
in $\calu_\infty,$ and 
$\Psi_k$ strongly converges in $L^2(0,T;\H)$ to some
$\tilde \Psi$, so that 
$u_k \calb_2 \Psi_k$ weakly converges in 
$L^2(0,T;\H)$ to $\tilde u\calb_2 \tilde\Psi.$ Hence, by the 
expression of mild solutions, 
$\Psi_k$ weakly converges in $C(0,T;\H)$  to $\tilde \Psi$
and $\tilde\Psi$ is the state associated with $\tilde u$. 
\end{proof}

\begin{theorem}
\label{existsol}
Let \eqref{compachyp} hold.  
Then problem $(P)$ has a nonempty set of minima. 
\end{theorem}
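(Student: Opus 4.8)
The plan is to apply the direct method of the calculus of variations. First I would verify that $F$ is bounded below on the feasible set. Since every $u\in\calu_{ad}$ satisfies $u_m\le u\le u_M$ a.e., the set $\calu_{ad}$ is bounded in $L^\infty(0,T)$, hence in $L^1(0,T)$ as well; the basic estimate of Theorem \ref{Gronwall} then provides a uniform bound on $\|\Psi[u]\|_{C([0,T];\H)}$ over $\calu_{ad}$. As $q$ and $q_T$ are continuous quadratic forms and the term $\alpha\int_0^T u\,\dd t$ is bounded on $\calu_{ad}$, the functional $F$ is bounded there, so $m:=\inf_{\calu_{ad}}F$ is finite and I may fix a minimizing sequence $(u_k)\subset\calu_{ad}$ with $F(u_k)\to m$.

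Next I would extract a weak$*$ limit. Since $\calu_{ad}$ is bounded in $L^\infty(0,T)=(L^1(0,T))^*$ and $L^1(0,T)$ is separable, Banach--Alaoglu yields a subsequence (not relabelled) with $u_k\stackrel{*}{\rightharpoonup}\uh$ in $L^\infty(0,T)$. The feasible set is weak$*$ sequentially closed: testing the bounds $u_m\le u_k\le u_M$ against an arbitrary nonnegative $\phi\in L^1(0,T)$ and passing to the limit gives $u_m\le\uh\le u_M$ a.e., so $\uh\in\calu_{ad}$. It then suffices to show $F(\uh)\le\liminf_k F(u_k)=m$, which forces $F(\uh)=m$ and makes $\uh$ a minimizer.

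The core of the argument is therefore the sequential lower semicontinuity of $F$ along $(u_k)$, and this is where the compactness hypothesis \eqref{compachyp} is used. By Lemma \ref{compactlemu} we have $\Psi[u_k]\rightharpoonup\Psi[\uh]$ weakly in $C(0,T;\H)$, and the argument behind that lemma, through \eqref{compachyp}, in fact gives $\Psi[u_k]\to\Psi[\uh]$ strongly in $L^2(0,T;\H)$. Consequently the control cost satisfies $\alpha\int_0^T u_k\,\dd t\to\alpha\int_0^T\uh\,\dd t$ (since $1\in L^1(0,T)$ and the convergence is weak$*$); the running term converges, $\tfrac12\int_0^T q(\Psi[u_k]-\Psi_d)\,\dd t\to\tfrac12\int_0^T q(\Psi[\uh]-\Psi_d)\,\dd t$, because $y\mapsto\int_0^T q(y(t)-\Psi_d(t))\,\dd t$ is locally Lipschitz, hence continuous, on $L^2(0,T;\H)$ and the state convergence there is strong; and the terminal weak convergence $\Psi[u_k](T)\rightharpoonup\Psi[\uh](T)$ in $\H$ gives $\tfrac12 q_T(\Psi[\uh](T)-\Psi_{dT})\le\liminf_k\tfrac12 q_T(\Psi[u_k](T)-\Psi_{dT})$ by weak lower semicontinuity of the terminal form. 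Summing the three contributions yields $F(\uh)\le\liminf_k F(u_k)=m$.

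I expect the decisive difficulty to be precisely the terminal term. In contrast with the running cost, the state converges only weakly at $t=T$, because the operator $g\mapsto\int_0^T e^{-(T-s)\A}g(s)\,\dd s$ from $L^2(0,T;\H)$ to $\H$ need not be compact for a general semigroup (the wave equation being the typical case), so continuity fails and one must invoke weak lower semicontinuity; for a continuous quadratic form this holds exactly when the form is convex, i.e. $Q_T\succeq 0$. The two points to check with care are thus (a) that \eqref{compachyp} genuinely upgrades the weak convergence of the states to strong $L^2$ convergence, so that no convexity is needed for the possibly indefinite running form $q$, and (b) that $q_T$ is weakly lower semicontinuous on $\H$ (automatic in the heat-equation setting, where the semigroup is compact and even $\Psi[u_k](T)$ converges strongly, and guaranteed by $Q_T\succeq0$ otherwise).
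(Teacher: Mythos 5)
Your proposal has the same skeleton as the paper's proof (direct method: minimizing sequence in the bounded set $\calu_{ad}$, Banach--Alaoglu, weak$*$ closedness of $\calu_{ad}$, then lower semicontinuity via Lemma \ref{compactlemu}), but the two arguments part ways at the lower semicontinuity step, and that is where yours has a genuine gap. The paper never uses strong convergence of the states: it takes from Lemma \ref{compactlemu} only the weak convergence $\Psi[u_k]\rightharpoonup\Psi[\uh]$ (which, being weak convergence in $C(0,T;\H)$, also gives $\Psi[u_k](T)\rightharpoonup\Psi[\uh](T)$ in $\H$ for the terminal term) and concludes because $J$ is \emph{convex} and continuous jointly in $(u,\Psi)$, the control cost being linear in $u$; convexity plus continuity gives weak lower semicontinuity of all terms at once. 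The price is an implicit positivity assumption: the paper's assertion that $J$ is convex amounts to $Q\succeq 0$ \emph{and} $Q_T\succeq 0$, i.e.\ exactly the kind of hypothesis you flag in your point (b), needed for the running form as much as for the terminal one (both hold in the heat and wave applications, where $q$ and $q_T$ are squared norms).

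Your route instead rests on the claim that \eqref{compachyp} upgrades the weak convergence of the states to strong convergence $\Psi[u_k]\to\Psi[\uh]$ in $L^2(0,T;\H)$, so that the possibly indefinite running form passes to the limit by continuity. That step is not justified as you state it: hypothesis \eqref{compachyp} asserts compactness of $f\mapsto \B_2\, y[y_0,f]$, \emph{not} of the solution map $f\mapsto y[y_0,f]$. Applied to the bounded sources $f+u_k(\B_1+\B_2\Psi[u_k])$ it yields, along a subsequence, strong $L^2$ convergence of $\B_2\Psi[u_k]$ --- which is all that is needed to pass to the limit in the bilinear term $u_k\B_2\Psi[u_k]$ --- and nothing more; the sources themselves converge only weakly, and for the wave equation the solution map is genuinely non-compact, so no direct invocation of \eqref{compachyp} produces strong convergence of the states. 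The claim happens to be true, but for a reason you would have to supply, which exploits that the control is scalar: for fixed $g\in L^2(0,T;\H)$, the map $u\mapsto y[0,u\,g]$ is an integral operator with kernel $K(t,s)=e^{-(t-s)\A}g(s)$ for $s<t$ (and $K=0$ otherwise), square integrable with values in $\H$, hence compact from $L^2(0,T)$ into $L^2(0,T;\H)$. Writing $\Psi[u_k]=y[\Psi_0,f]+y[0,u_k\B_1]+y[0,u_k\B_2\Psi[\uh]]+y[0,u_k\B_2(\Psi[u_k]-\Psi[\uh])]$, the second and third terms converge strongly by this compactness (since $u_k\rightharpoonup\uh$ in $L^2(0,T)$), and the last tends to zero in $C(0,T;\H)$ by \eqref{compachyp} together with the uniform bound on $\|u_k\|_\infty$. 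Unless you add this argument --- or assume $Q\succeq 0$ and fall back on the paper's convexity argument --- your treatment of the running cost term is unsupported.
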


\begin{proof}
Let us first notice that the problem is feasible. 
Since $\calu_{ad}$
is a bounded subset of $\calu$,   any 
minimizing sequence $(u_k)$ has a weakly$*$
 converging  subsequence to some $\tilde u\in \calu$. 
Reindexing, we may assume that $(u_k)$ weakly$*$
converges to $\tilde u$. So $(u_k)$ also weakly converges
to $\tilde u$ in $L^2(0,T)$. Since $\calu_{ad}$
is a closed subset of $L^2(0,T)$, necessarily  $\tilde u\in \calu_{ad}$.
By Lemma \ref{compactlemu}, 
$\Psi[u_k]\rar \Psi[\tilde u]$ weakly in $L^2(0,T;H)$.
Since $J$ is convex and continuous in
$L^2(0,T)\times L^2(0,T;\calh)$, it is weakly l.s.c.
so that $J(\tilde u,\Psi[\tilde u]) \leq \lim_{k\rightarrow \infty} J(u_k,\Psi[u_k]).$ Since the limit in the right hand-side of latter inequality is the optimal value, necessarily $(\tilde u,\Psi[\tilde u])$ is optimal.
The result follows.
\end{proof}

The costate equation is
\be
\label{semg8}
- \dot p + \A^* p = Q(\Psi-\Psi_d) + u \B_2^* p; 
\quad
p(T) = Q_T(\Psi(T)-\Psi_{dT} ).
\ee
We denote by $p[u]$ its mild (backward) solution:
\be
\label{semadj}
p(t) = e^{ (t-T) \A^*} Q_T(\Psi(T)-\Psi_d(T) )
+ \int_t^T e^{ (t-s) \A^*} 
\big( Q(\Psi(s)-\Psi_d(s) ) + u(s) \B_2^* p(s) \big) \dd s.
\ee
We set
\be
\label{expr-lambda}
\Lambda(t):=\alpha + \la p(t),\B_1+\B_2\Psih(t) \ra.
\ee

\begin{theorem}
\label{diffcostfct}
The mapping $u\mapsto F(u)$ is of class 
$C^\infty$ from $\U$ to $\cR$ and we have that 
\be
\label{diffcostfct1}
DF(u)v=\int_0^T \Lambda(t)  v(t) \dd t,\qquad \text{for all } v\in \U.
\ee
\end{theorem}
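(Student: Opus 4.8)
The plan is to differentiate the reduced cost $F(u) = J(u, \Psi[u])$ using the chain rule, exploiting the smoothness established in Theorem~\ref{diffstateqeu}, and then to simplify the resulting expression by introducing the costate $p[u]$ via the integration-by-parts Lemma~\ref{lem:IBP1}. The $C^\infty$ regularity is essentially immediate: $F$ is the composition of the smooth map $u\mapsto \Psi[u]$ (of class $C^\infty$ by Theorem~\ref{diffstateqeu}) with the cost functional $J$, which is a polynomial (quadratic) expression in its arguments and hence smooth. So the only substantive content is the \emph{formula} \eqref{diffcostfct1} for the first derivative.

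First I would compute $DF(u)v$ directly from the definition of $J$ in \eqref{def-cost-fun}. Writing $z = z[v] = D\Psi[u]v$ (the solution of the linearized equation \eqref{semg5lin}, by \eqref{equ:DPsi}), the chain rule gives
\begin{equation}
DF(u)v = \alpha \int_0^T v(t)\,\dd t
+ \int_0^T \la Q(\Psi(t)-\Psi_d(t)), z(t)\ra\,\dd t
+ \la Q_T(\Psi(T)-\Psi_{dT}), z(T)\ra,
\end{equation}
where I have used the symmetry of $Q$ and $Q_T$ to differentiate the quadratic terms. The goal is now to eliminate the dependence on $z$, which is the linearized state, and replace it by an integral against $v$ alone.

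The key step is to apply the integration-by-parts Lemma~\ref{lem:IBP1} to the pair $(y,p) = (z, p[u])$. Here $z$ solves the forward equation \eqref{yp}(i) with $a = \uh \B_2$ and $b = v(\B_1 + \B_2\Psih)$ (compare \eqref{semg5lin}), while $p = p[u]$ solves the backward costate equation \eqref{semg8}, which is exactly \eqref{yp}(ii) with the same $a^* = \uh \B_2^*$ and with $g = Q(\Psi-\Psi_d)$, together with the terminal condition $p(T) = Q_T(\Psi(T)-\Psi_{dT})$. Since $z(0) = 0$, the IBP identity \eqref{pqform0} reduces to
\begin{equation}
\la Q_T(\Psi(T)-\Psi_{dT}), z(T)\ra
+ \int_0^T \la Q(\Psi(t)-\Psi_d(t)), z(t)\ra\,\dd t
= \int_0^T \la p(t), v(t)(\B_1 + \B_2\Psih(t))\ra\,\dd t.
\end{equation}
Substituting this into the expression for $DF(u)v$ collapses the two $z$-dependent terms into a single integral, yielding
\begin{equation}
DF(u)v = \int_0^T \Big(\alpha + \la p(t), \B_1 + \B_2\Psih(t)\ra\Big) v(t)\,\dd t
= \int_0^T \Lambda(t)\, v(t)\,\dd t,
\end{equation}
which is \eqref{diffcostfct1} by the definition \eqref{expr-lambda} of $\Lambda$.

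The main obstacle, such as it is, lies in verifying that the hypotheses of Lemma~\ref{lem:IBP1} genuinely apply: one must check that $b = v(\B_1+\B_2\Psih)$ lies in $L^1(0,T;\H)$ and that $g = Q(\Psi-\Psi_d)$ lies in $L^1(0,T;\H^*)$, which follows from $v\in L^1(0,T)$, boundedness of $\B_2$ and $Q$, and the regularity $\Psih, \Psi_d \in L^\infty(0,T;\H)$, and that $a = \uh\B_2 \in L^\infty(0,T;\L(\H))$, which holds since $\uh\in\calu_{ad}$ is essentially bounded. Once the matching of coefficients $(a,b,g)$ between the linearized and costate systems is made explicit, the identity follows and no delicate estimates are needed; the smoothness is then recorded as a direct consequence of the composition structure.
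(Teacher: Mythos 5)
Your proof is correct and takes essentially the same route as the paper's: the chain rule through Theorem \ref{diffstateqeu} and the smoothness of $J$ gives the $C^\infty$ property and the intermediate expression for $DF(u)v$, and the integration-by-parts Lemma \ref{lem:IBP1} (which the paper cites as Lemma \ref{pqform}; they are the same lemma) then collapses the two $z$-dependent terms into $\int_0^T \Lambda(t)v(t)\,\dd t$. The only difference is that you make explicit the identification of $(a,b,g)$, the terminal condition, and the integrability hypotheses, which the paper leaves implicit.
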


\begin{proof}
That $F(u)$ is of class $C^\infty$
follows from Theorem \ref{diffstateqeu}
and the fact that $J$ is of class $C^\infty$. This also implies
that, setting $\Psi:=\Psi[u]$ and $z:=z[u]$: 
$$
DF(u)v = 
\alpha \int_0^T v(t) \dd t
+
\int_0^T Q( \Psi(t)-\Psi_d(t), z(t) ) \dd t 
+
Q_T( \Psi(T)-\Psi_{dT}, z(T) ).
$$
We deduce then \eqref{diffcostfct1} from Lemma \ref{pqform}.
\end{proof}

Let for $u \in \Uad$ and $I_{m}(u)$ and $I_{M}(u)$ be the associated contact sets defined, up to a zero-measure set, as
\be
\left\{ 
\begin{aligned}
  I_{m}(u) &:= \{t\in (0,T) : u(t) = u_m\},
\\
   I_{M}(u) &:= \{t\in (0,T) : u(t) = u_M \}.
\end{aligned}
\right.
\ee
The first order optimality necessary condition is given as follows.
\begin{proposition}
\label{Prop1order}
Let $\uh$ be a weak minimum of \eqref{problem:OC}.
Then, up to a set of measure zero, there holds
 \be
 \{t;\; \Lambda(t)>0\}\subset I_m(\uh),\quad \{t;\; \Lambda(t)<0\}\subset I_M(\uh).
 \ee
\end{proposition}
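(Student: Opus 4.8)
The plan is to derive the claimed pointwise condition from the first-order variational inequality that holds at a local minimum over the convex admissible set $\Uad$, and then to localize that single integral inequality by a contradiction argument on sets of positive measure.

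First I would record the variational inequality. Since $\Uad$ is convex and $\uh$ is a weak minimum, for any $u\in\Uad$ the segment $\uh+\theta(u-\uh)$, $\theta\in[0,1]$, stays in $\Uad$, and for small $\theta>0$ it is arbitrarily close to $\uh$ in $L^\infty(0,T)$. Hence $\theta\mapsto F(\uh+\theta(u-\uh))$ has a local minimum at $\theta=0$ on $[0,1]$; as $F$ is $C^\infty$ by Theorem \ref{diffcostfct}, its right derivative at $0$ is nonnegative, which together with the expression \eqref{diffcostfct1} for $DF$ gives
\[
\int_0^T \Lambda(t)\,(u(t)-\uh(t))\,\dd t \;\geq\; 0,\qquad \text{for all } u\in\Uad.
\]

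Second I would show that this inequality forces $\uh(t)$ to minimize $w\mapsto\Lambda(t)w$ over $[u_m,u_M]$ for a.a.\ $t$, which is exactly the pair of inclusions. I would argue by contradiction for the first inclusion: if $\{t:\Lambda(t)>0,\ \uh(t)>u_m\}$ had positive measure, then for some $\eps>0$ the set $A_\eps:=\{t:\Lambda(t)\geq\eps,\ \uh(t)\geq u_m+\eps\}$ would have positive measure. Taking the control $u$ equal to $\uh-\eps$ on $A_\eps$ and to $\uh$ elsewhere, one checks $u\in\Uad$ (since $u_m\leq\uh-\eps\leq u_M$ on $A_\eps$), and
\[
\int_0^T \Lambda(t)\,(u(t)-\uh(t))\,\dd t \;=\; -\eps\int_{A_\eps}\Lambda(t)\,\dd t \;\leq\; -\eps^2\,\meas(A_\eps) \;<\; 0,
\]
contradicting the variational inequality; this yields $\{\Lambda>0\}\subset I_m(\uh)$ up to a null set. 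The symmetric perturbation, increasing $\uh$ by $\eps$ on $B_\eps:=\{t:\Lambda(t)\leq-\eps,\ \uh(t)\leq u_M-\eps\}$, gives $\{\Lambda<0\}\subset I_M(\uh)$.

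The one step requiring care is this localization: passing from a single integral inequality to a pointwise statement. The device making the contradiction work is the reduction to $A_\eps$ (resp.\ $B_\eps$), on which both $\Lambda$ and the slack of $\uh$ from the active bound are bounded below by $\eps$; this simultaneously keeps the perturbed control feasible, because $\Uad$ is defined by pointwise box constraints, and makes the cost variation strictly negative. Everything else is routine.
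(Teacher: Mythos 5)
Your proof is correct and follows essentially the same route as the paper: both derive the variational inequality $\int_0^T \Lambda(t)\,(u(t)-\uh(t))\,\dd t \geq 0$ for all $u\in\Uad$ from the directional derivative of $F$ at the weak minimum, using that controls in $\Uad$ are uniformly bounded so that $\uh+\sigma(u-\uh)$ stays $L^\infty$-close to $\uh$. The only difference is that the paper dismisses the localization as ``the conclusion easily follows,'' whereas you supply it explicitly via the sets $A_\eps$, $B_\eps$ and needle-type perturbations---a correct filling-in of exactly that routine step.
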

\begin{proof}
 $F$ is differentiable and attains its minimum over the convex set
 $\Uad$ at $\uh$ and thus,
if $\uh+v\in \Uad$, then 
 \be
 0 \le \lim_{\sigma \downarrow 0} \frac{F(\uh+\sigma v)-F(\uh)}{\sigma}=DF(\uh)v.
 \ee
 Since $DF(\uh)v=\int_0^T \Lambda (t) v(t)\dd t $, 
this means that
\be
\int_0^T \Lambda (t) (u(t)-\uh(t) ) \dd t \geq0, 
\quad \text{for all $u\in \Uad$,}
\ee
from which the conclusion easily follows.
\end{proof}

Set $\delta \Psi:= \Psi-\hat\Psi.$ 
We note for future reference that,
since $u \Psi-\uh\Psih= u \delta\Psi + v \Psih$,
we have that $\delta \Psi$
is the mild solution of:
\be
  \label{semdeltaPsi}
  \ddt \delta \Psi(t)+ \A \delta \Psi(t) = 
u(s)\B_2 \delta\Psi(s) + v(t) (\B_1 + \B_2 \Psih(t) ).
\ee
Thus, $\eta := \delta\Psi-z$ is solution of 
\be
  \label{sem-eta}
  \dot \eta(t) + \A  \eta(t) = \uh \calb_2 \eta(t) + v(s)\B_2 \delta\Psi(s).
\ee

We get the following estimates.
\begin{lemma}
\label{Lemmaestz}
The linearized state $z$ solution of \eqref{semg5lin},
the solution $\delta \Psi$ of \eqref{semdeltaPsi}, and 
$\eta = \delta\Psi-z$ solution of \eqref{sem-eta} satisfy,
whenever $v$ remains in a bounded set of $L^1(0,T)$:
\begin{eqnarray}
\label{estz}
\|z\|_{L^\infty(0,T;\H)} &=& O( \|v\|_1),
\\
\label{estdeltaPsi}
\|\delta\Psi\|_{L^\infty(0,T;\H)} &=& O( \|v\|_1),
\\
\label{esteta}
\|\eta\|_{L^\infty(0,T;\H)} &=& O(\| \delta\Psi\,v\|_{L^1(0,T;\H)}) = O(\|v\|_1^2).
\end{eqnarray}
\end{lemma}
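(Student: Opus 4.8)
The plan is to observe that the three equations \eqref{semg5lin}, \eqref{semdeltaPsi}, and \eqref{sem-eta} are all of the same structural type as the state equation \eqref{semg1}: each is \emph{linear} in its unknown, with a bilinear term of the form (a given control)$\,\times \B_2 \times\,$(the unknown) and an $L^1(0,T;\H)$ forcing, and each has zero initial datum. Consequently the basic estimate of Theorem \ref{Gronwall} (equivalently, the Gronwall argument in its proof) applies to each, yielding a bound of the form
\[
\|\,\cdot\,\|_{C([0,T];\H)} \le \gamma' \,\|\text{forcing}\|_{L^1(0,T;\H)}\, e^{\gamma' \|\text{control}\|_1},
\]
where $\gamma'$ depends only on $c_\A$, $\lambda_\A$, $\|\B_2\|_{\L(\H)}$ and $T$. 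The entire proof then reduces to identifying, equation by equation, what plays the role of the forcing and of the control, and to checking that the exponential factor stays bounded while $v$ ranges over a bounded subset of $L^1(0,T)$.

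First I would treat $z$. In the mild form of \eqref{semg5lin} the control is the fixed $\uh$ and the forcing is $v(\B_1+\B_2\Psih)$, whose $L^1(0,T;\H)$ norm is bounded by $\|v\|_1\big(\|\B_1\|_\H + \|\B_2\|_{\L(\H)}\|\Psih\|_{C([0,T];\H)}\big)$. Since $\Psih$ is fixed and the factor $e^{\gamma'\|\uh\|_1}$ is a constant, this gives \eqref{estz} at once. Next, for $\delta\Psi$ the forcing in \eqref{semdeltaPsi} is again $v(\B_1+\B_2\Psih)$, hence again $O(\|v\|_1)$, but the control is now the perturbed $u=\uh+v$. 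The only new point is that $\|u\|_1\le\|\uh\|_1+\|v\|_1$ stays bounded as long as $v$ ranges over a bounded set of $L^1(0,T)$, so the exponential factor is uniformly bounded; this yields \eqref{estdeltaPsi}.

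Finally, for $\eta$ the control in \eqref{sem-eta} is the fixed $\uh$ while the forcing is $v\B_2\delta\Psi$, whose $L^1(0,T;\H)$ norm is bounded by $\|\B_2\|_{\L(\H)}\|\delta\Psi\,v\|_{L^1(0,T;\H)}$; this produces the first equality in \eqref{esteta}. To pass to the quadratic bound I would then invoke \eqref{estdeltaPsi}: since $\|\delta\Psi\,v\|_{L^1(0,T;\H)}\le\|v\|_1\,\|\delta\Psi\|_{L^\infty(0,T;\H)}$ and $\|\delta\Psi\|_{L^\infty(0,T;\H)}=O(\|v\|_1)$, we conclude $\|\eta\|_{L^\infty(0,T;\H)}=O(\|v\|_1^2)$.

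There is no genuine obstacle here; the proof is a threefold application of a single Gronwall estimate. The points that require care are purely bookkeeping: correctly separating, in each equation, the term that acts as forcing from the one that acts as the coefficient of the bilinear (control) term, and respecting the logical order of the three estimates — \eqref{esteta} must be derived \emph{after} \eqref{estdeltaPsi}, since its quadratic form relies on it, and the uniform control of the exponential factors rests throughout on the assumption that $v$ stays in a bounded set of $L^1(0,T)$.
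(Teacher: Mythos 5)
Your proposal is correct and follows essentially the same route as the paper: the paper also derives all three estimates by rerunning the Gronwall argument of Theorem \ref{Gronwall} on \eqref{semg5lin}, \eqref{semdeltaPsi} and \eqref{sem-eta}, identifying in each case the forcing term ($v(\B_1+\B_2\Psih)$ for $z$ and $\delta\Psi$, $v\B_2\delta\Psi$ for $\eta$) and the coefficient of the bilinear term ($\uh$, $u$, $\uh$ respectively), and then obtaining the quadratic bound in \eqref{esteta} from \eqref{estdeltaPsi} exactly as you do. Your bookkeeping of the exponential factors (constant for fixed $\uh$, uniformly bounded for $u=\uh+v$ with $v$ in a bounded set of $L^1$) matches the paper's constants $\gamma'$ and $K(\|u\|_1,\|\B_2\|_{\L(\H)})$.
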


\begin{proof}
By arguments close to those in the proof of 
Theorem \ref{Gronwall}, we get 
\be
\|z\|_{L^\infty(0,T;\H)} \leq 
\gamma'   \|v\|_1 e^{\gamma' \|v\|_1} 
\ee
for some $\gamma'$ not depending on $v$,
which, since $\|v\|_1$ is bounded, 
proves \eqref{estz}. Then, we also have by
\eqref{semdeltaPsi}
\be
\|\delta\Psi\|_{L^\infty(0,T;\H)} \leq K(\|u\|_1,\|\B_2\|_{\L(\H)})\, (\|\B_1\|_\H + \|\B_2\|_{\L(\H)} \|\hat\Psi\|_{L^\infty(0,T;\H)}) \|v\|_1,
\ee
which implies \eqref{estdeltaPsi}. Finally, it holds with 
\eqref{sem-eta}
\be
\begin{aligned}
\|\eta\|_{L^\infty(0,T;\H)} &\leq K\big(\|\uh\|_1,\|\B_2\|_{\L(\H)}\big) \|v \B_2 \delta\Psi\|_{L^1(0,T;\H)} \\
& \leq  K\big(\|\uh\|_1,\|\B_2\|_{\L(\H)}\big) \|\B_2\|_{\L(\H)}\| \delta\Psi\,v\|_{L^1(0,T;\H)},
\end{aligned} 
\ee
that yields the first equality in \eqref{esteta}. The second one follows in view of \eqref{estdeltaPsi}. 
\end{proof}

\section{Second order optimality conditions}\label{sec:soocg}
\subsection{A technical result}
Let $\uh \in \calu$ , 
with associated state $\Psih=\Psi[\uh]$ and costate
$\ph$ solution of \eqref{semadj}, 
$v\in L^1(0,T)$, and $z\in C(0,T;\H)$.
Let us set
\be
  \label{tildeQ}
  \Q(z,v) := \int_0^T \Big( q (z(t) )
 + 2 v(t) \la \ph(t),\B_2  z(t) \ra  \Big)\dd t + q_T (z(T) ).
  \ee

\begin{proposition}\label{ExpEasy}
  Let $u$ belong to $\calu$. 
Set $v:=u-\uh$, 
  $\Psih := \Psi [\uh]$, $\Psi := \Psi [u]$.
Then
  \be
  F(u) = F(\uh) + D F(\uh)v + \half  \Q(\delta\Psi,v).
  \ee
  
\end{proposition}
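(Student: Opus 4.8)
The plan is to exploit the special structure of the cost \eqref{def-cost-fun}: it is affine in $u$ and quadratic in $\Psi$, so that its expansion around $(\uh,\Psih)$ in terms of the increments $v=u-\uh$ and $\delta\Psi=\Psi-\Psih$ is \emph{exact}, with no remainder beyond second order. First I would substitute $\Psi=\Psih+\delta\Psi$ into $J$ and use the symmetry of $Q$ and $Q_T$ to write, for the running cost, $q(\Psi-\Psi_d)=q(\Psih-\Psi_d)+2\la Q(\Psih-\Psi_d),\delta\Psi\ra+q(\delta\Psi)$, and analogously $q_T(\Psi(T)-\Psi_{dT})=q_T(\Psih(T)-\Psi_{dT})+2\la Q_T(\Psih(T)-\Psi_{dT}),\delta\Psi(T)\ra+q_T(\delta\Psi(T))$. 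Subtracting $F(\uh)=J(\uh,\Psih)$ and recalling that the control enters $J$ linearly through $\alpha\int_0^T u\,\dd t$, this yields
\be
\begin{aligned}
F(u)-F(\uh)={}&\alpha\int_0^T v\,\dd t+\int_0^T\la Q(\Psih-\Psi_d),\delta\Psi\ra\,\dd t+\la Q_T(\Psih(T)-\Psi_{dT}),\delta\Psi(T)\ra\\
&+\tfrac12\int_0^T q(\delta\Psi)\,\dd t+\tfrac12 q_T(\delta\Psi(T)).
\end{aligned}
\ee

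The purely quadratic terms $\tfrac12\int_0^T q(\delta\Psi)\,\dd t+\tfrac12 q_T(\delta\Psi(T))$ already match part of $\tfrac12\Q(\delta\Psi,v)$ from \eqref{tildeQ}. Comparing with the claimed formula and with $DF(\uh)v=\int_0^T\Lambda v\,\dd t$ from Theorem \ref{diffcostfct} and \eqref{expr-lambda}, it remains to establish the identity
\be
\begin{aligned}
&\int_0^T\la Q(\Psih-\Psi_d),\delta\Psi\ra\,\dd t+\la Q_T(\Psih(T)-\Psi_{dT}),\delta\Psi(T)\ra\\
&\qquad=\int_0^T v\,\la\ph,\B_1+\B_2\Psih\ra\,\dd t+\int_0^T v\,\la\ph,\B_2\delta\Psi\ra\,\dd t,
\end{aligned}
\ee
the right-hand side being exactly $DF(\uh)v-\alpha\int_0^T v\,\dd t$ plus the cross term of $\tfrac12\Q$.

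The core of the argument is to read this identity as an instance of the integration-by-parts Lemma \ref{pqform} applied to the forward-backward pair $(y,p)=(\delta\Psi,\ph)$. The costate $\ph$ solves \eqref{semg8}, i.e. the backward equation with coefficient $\uh\B_2^*$ and source $g=Q(\Psih-\Psi_d)$, while $\delta\Psi$ solves \eqref{semdeltaPsi}, whose zero-order term carries the coefficient $u=\uh+v$, not $\uh$. To fit the symmetric hypotheses of Lemma \ref{pqform}, I would rewrite $u\B_2\delta\Psi=\uh\B_2\delta\Psi+v\B_2\delta\Psi$ and absorb the second piece into the source, so that \eqref{semdeltaPsi} becomes the forward equation with $a=\uh\B_2$ (whose adjoint $\uh\B_2^*$ now matches the costate) and $b=v(\B_1+\B_2\Psih)+v\B_2\delta\Psi=v(\B_1+\B_2\Psi)$. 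Lemma \ref{pqform} then gives $\la\ph(T),\delta\Psi(T)\ra+\int_0^T\la g,\delta\Psi\ra\,\dd t=\la\ph(0),\delta\Psi(0)\ra+\int_0^T\la\ph,b\ra\,\dd t$; using $\delta\Psi(0)=0$ and the terminal condition $\ph(T)=Q_T(\Psih(T)-\Psi_{dT})$, together with $\la\ph,\B_1+\B_2\Psi\ra=\la\ph,\B_1+\B_2\Psih\ra+\la\ph,\B_2\delta\Psi\ra$, this is precisely the required identity.

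The step I expect to be delicate is exactly this matching: the forward equation for $\delta\Psi$ naturally involves the coefficient $u$, whereas the costate is built with $\uh$, so a naive application of the symmetric IBP lemma fails. The resolution—splitting $v\B_2\delta\Psi$ off the zero-order term and recognizing that the choice $b=v(\B_1+\B_2\Psi)$ \emph{regenerates} the missing cross term $\int_0^T v\la\ph,\B_2\delta\Psi\ra\,\dd t$—is what closes the identity and, with the quadratic terms already accounted for, completes the proof.
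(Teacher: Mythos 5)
Your proof is correct and follows essentially the same route as the paper: an exact second-order expansion of the purely quadratic cost, followed by the integration-by-parts Lemma \ref{lem:IBP1} with the costate $\ph$ to convert the first-order terms in $\delta\Psi$ into $DF(\uh)v$ plus the cross term of $\Q$. You are in fact more explicit than the paper's terse proof (which nominally invokes the pair $(z,\ph)$): your rewriting of \eqref{semdeltaPsi} with $a=\uh\B_2$ and $b=v(\B_1+\B_2\Psi)$, so that the zero-order coefficient matches the one in the costate equation and the term $\int_0^T v\la\ph,\B_2\delta\Psi\ra\,\dd t$ is regenerated from the source, is precisely the computation the paper leaves implicit.
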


\begin{proof}
 We can expand the cost function as follows:
\be
\label{semcost}
  \begin{aligned}    
& F(u)= 
F(\uh) + \half ( q(\delta\Psi) + q_T(\delta\Psi(T) ) )
\\ & 
+ \alpha \int_0^T   v(t) \dd t
+ \int_0^T 
Q( \hat \Psi(t) -\Psi_d(t), \delta \Psi) ) \dd t  
+ Q_T( \hat\Psi(T) -\Psi_d(T),\delta \Psi(T)).
  \end{aligned}
\ee
Applying Lemma \ref{lem:IBP1}
to the pair $(z,\ph)$, where $z$ is solution of the 
linearized equation \eqref{semg5lin},
and using the expression of $\Lambda$ in 
\eqref{expr-lambda}, we obtain the result.
\end{proof}

\begin{corollary}\label{cor:expansion2}
Let $u$ and $\uh$ be as before, and set
$z := z[v]$. Then 
\be
F(u) = F(\uh) + D F(\uh)v + \half  \Q(z,v) + O(\|v\|^3_1).
\ee 
\end{corollary}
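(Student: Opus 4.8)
The plan is to reduce everything to Proposition \ref{ExpEasy} and then control the gap between $\Q(\delta\Psi,v)$ and $\Q(z,v)$ by means of the estimates in Lemma \ref{Lemmaestz}. Indeed, Proposition \ref{ExpEasy} already provides the \emph{exact} expansion
\[
F(u) = F(\uh) + DF(\uh)v + \half\,\Q(\delta\Psi,v),
\]
so the corollary will follow once we show that $\Q(\delta\Psi,v) - \Q(z,v) = O(\|v\|_1^3)$, the remainder being absorbed into the $\half$ factor.

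First I would write $\delta\Psi = z + \eta$ with $\eta := \delta\Psi - z$, and expand $\Q$ by bilinearity. Denoting by $q(\cdot,\cdot)$ and $q_T(\cdot,\cdot)$ the symmetric bilinear forms associated with $q$ and $q_T$, we have $q(\delta\Psi) = q(z) + 2q(z,\eta) + q(\eta)$, the same identity for $q_T(\delta\Psi(T))$, and the linear expansion $v\la\ph,\B_2\delta\Psi\ra = v\la\ph,\B_2 z\ra + v\la\ph,\B_2\eta\ra$ for the crossed term. Subtracting $\Q(z,v)$ then gives
\[
\Q(\delta\Psi,v) - \Q(z,v)
= \int_0^T\!\!\big(2q(z,\eta) + q(\eta) + 2v\la\ph,\B_2\eta\ra\big)\dd t
+ 2q_T(z(T),\eta(T)) + q_T(\eta(T)).
\]
Next I would estimate each term using continuity of $Q,Q_T,\B_2$ and boundedness of $\ph$ over $[0,T]$, together with $\|z\|_{L^\infty(0,T;\H)} = O(\|v\|_1)$ and $\|\eta\|_{L^\infty(0,T;\H)} = O(\|v\|_1^2)$ from Lemma \ref{Lemmaestz}. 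The $z$--$\eta$ cross terms are then $O(\|v\|_1\cdot\|v\|_1^2)=O(\|v\|_1^3)$, the purely quadratic $\eta$ terms are $O(\|v\|_1^4)$, and the term $\int_0^T 2v\la\ph,\B_2\eta\ra\dd t$ is bounded by $2\|\B_2\|_{\L(\H)}\,\|\ph\|_{L^\infty(0,T;\H^*)}\,\|\eta\|_{L^\infty(0,T;\H)}\,\|v\|_1 = O(\|v\|_1^3)$. Collecting, the whole difference is $O(\|v\|_1^3)$, which is the claim.

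I do not expect a serious obstacle here: the essential point is simply that $\eta$ is genuinely \emph{second} order in $\|v\|_1$ (estimate \eqref{esteta}), so every surviving term in the difference carries at least one factor of $\eta$, and pairing it either with the first-order quantity $z$ or with the $L^1$ factor $v$ produces a third-order remainder. The only step requiring a little care is the bookkeeping that keeps $\ph$ (fixed, hence bounded in $C(0,T;\H^*)$) and the operators $Q,Q_T,\B_2$ as mere constants, so that no hidden dependence on $\|v\|_1$ creeps in and spoils the orders.
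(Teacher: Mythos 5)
Your proposal is correct and follows essentially the same route as the paper: both start from the exact expansion of Proposition \ref{ExpEasy}, write the difference $\Q(\delta\Psi,v)-\Q(z,v)$ via bilinearity (your $2q(z,\eta)+q(\eta)$ is identical to the paper's $Q(\delta\Psi+z,\eta)$ since $\delta\Psi=z+\eta$), and bound every term using the estimates $\|z\|_{L^\infty(0,T;\H)}=O(\|v\|_1)$ and $\|\eta\|_{L^\infty(0,T;\H)}=O(\|v\|_1^2)$ from Lemma \ref{Lemmaestz}.
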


\begin{proof}
We have that
\begin{multline}
\Q(\delta\Psi,v) -\Q(z,v) = \int_0^T
Q(\delta \Psi(t) +z(t),\eta(t))
+ 2 v(t) \la p(t), B_2 \eta(t) \ra \dd t \\+ 
Q_T (\delta \Psi(T)+z(T),\eta(T)).
\end{multline}
By \eqref{estz}-\eqref{esteta}
we have that
\be
 \| \delta \Psi \|_{L^\infty(0,T;\calh)} +
 \| z \|_{L^\infty(0,T;\calh)} = O( \|v\|_1 ),
\ee
\be
\| \eta\|_{L^\infty(0,T;\calh)} = O( \|v\|_1 \|\delta\Psi\|_{L^\infty(0,T;\calh)} )
= O( \|v\|^2_1 ).
\ee
The result follows.
\end{proof}

Note that we will derive a refined Taylor expansion in 
Proposition \ref{prop:expansion_w}.

\subsection{Second order necessary optimality conditions}\label{sec:soonoc-n}
Given a feasible control $u$, the critical cone is defined as
\be
C(u) :=
\left\{
\begin{aligned}
  & v\in L^1(0,T) \,|\ \Lambda(t) v(t)=0\text{ a.e. on } [0,T], \\
  & v(t)\ge 0\,\, \text{a.e. on } I_m( u),\; v(t)\le 0\text{ a.e. on }I_M( u)
\end{aligned}\right\}.
\ee

\begin{theorem} \label{thm:nec_sec_ord_cond}
Let $\uh$ be a weak minimum 
of \eqref{problem:OC}. Then there holds,
  \be
  \Q(z[v],v) \ge 0\quad \text{for all } v \in C(\uh).
  \ee
\end{theorem}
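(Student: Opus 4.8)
The plan is to exploit the minimality of $\uh$ against one-sided variations along a critical direction, combined with the second order Taylor expansion of Corollary \ref{cor:expansion2}. Fix $v\in C(\uh)$ and recall that $z[\cdot]$ is linear, so $z[\sigma v]=\sigma z[v]$; since $\Q$ is homogeneous of degree two in its arguments, $\Q(z[\sigma v],\sigma v)=\sigma^2\Q(z[v],v)$. Moreover, because $v$ is critical we have $\Lambda v=0$ a.e., so by Theorem \ref{diffcostfct} the first order term $DF(\uh)v=\int_0^T\Lambda v\,\dd t$ vanishes. Thus if $\uh+\sigma v$ were feasible for small $\sigma>0$, Corollary \ref{cor:expansion2} would give $F(\uh+\sigma v)=F(\uh)+\tfrac12\sigma^2\Q(z[v],v)+O(\sigma^3)$, and minimality would yield $\tfrac12\sigma^2\Q(z[v],v)+O(\sigma^3)\ge 0$; dividing by $\sigma^2$ and letting $\sigma\downarrow 0$ would give the claim.

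The difficulty is \emph{feasibility}: for $v\in L^1$ the perturbation $\uh+\sigma v$ need not respect the bound constraints. I would first treat $v\in C(\uh)\cap L^\infty$ and replace $\uh+\sigma v$ by its pointwise projection $u_\sigma:=P_{[u_m,u_M]}(\uh+\sigma v)$, which is feasible by construction and satisfies $\|u_\sigma-\uh\|_\infty\le\sigma\|v\|_\infty\to 0$, so that $u_\sigma$ is admissible for the weak minimum. Writing $u_\sigma=\uh+\sigma v+r_\sigma$, the key observations are: (a) on $I_m(\uh)$ and $I_M(\uh)$ the sign constraints defining $C(\uh)$ together with $v\in L^\infty$ guarantee $\uh+\sigma v\in[u_m,u_M]$ for $\sigma$ small, so $r_\sigma$ is supported in $\{u_m<\uh<u_M\}$, where Proposition \ref{Prop1order} forces $\Lambda=0$; hence $DF(\uh)r_\sigma=\int_0^T\Lambda r_\sigma\,\dd t=0$; and (b) since a.e. on the interior $\dist(\uh,\{u_m,u_M\})>0$, the set where the projection is active has measure tending to zero, giving $\|r_\sigma\|_1=o(\sigma)$. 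Feeding $u_\sigma-\uh=\sigma v+r_\sigma$ into Corollary \ref{cor:expansion2} and using the estimates \eqref{estz}--\eqref{estdeltaPsi} together with the boundedness of $\Q(z[\cdot],\cdot)$ as a quadratic form on $L^1(0,T)$, the contributions of $r_\sigma$ to both the first and second order terms are $o(\sigma^2)$ (cross terms being $O(\|\sigma v\|_1\,\|r_\sigma\|_1)=o(\sigma^2)$). Hence $F(u_\sigma)=F(\uh)+\tfrac12\sigma^2\Q(z[v],v)+o(\sigma^2)$, and minimality yields $\Q(z[v],v)\ge 0$.

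Finally I would remove the boundedness assumption by density. Given $v\in C(\uh)$, the truncations $v_n:=\mathrm{med}(-n,v,n)$ again lie in $C(\uh)$, since truncation preserves the sign conditions and the identity $\Lambda v_n=0$, and $v_n\to v$ in $L^1(0,T)$. Because $z[\cdot]$ is continuous from $L^1(0,T)$ to $C(0,T;\H)$, the map $v\mapsto\Q(z[v],v)$ is continuous for the $L^1$ topology: the quadratic and terminal terms converge since $z[v_n]\to z[v]$ in $C(0,T;\H)$, and the crossed term is handled by splitting $\int_0^T v_n\la\ph,\B_2 z[v_n]\ra\,\dd t-\int_0^T v\la\ph,\B_2 z[v]\ra\,\dd t$ and using $\|z[v_n]-z[v]\|_\infty\to 0$ with $\|v_n-v\|_1\to 0$. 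Passing to the limit in $\Q(z[v_n],v_n)\ge 0$ gives $\Q(z[v],v)\ge 0$ for all $v\in C(\uh)$. I expect the main obstacle to be the feasibility construction of the second paragraph, namely ensuring that the projection error contributes neither at first order (handled by the support argument combined with the first order conditions) nor at second order (handled by the $o(\sigma)$ bound on $\|r_\sigma\|_1$).
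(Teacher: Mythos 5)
Your proof is correct, and its skeleton is the same as the paper's: perturb $\uh$ along (a bounded version of) the critical direction, note that the first-order term $DF(\uh)v=\int_0^T\Lambda(t) v(t)\,\dd t$ vanishes on $C(\uh)$, apply the expansion of Corollary \ref{cor:expansion2}, divide by $\sigma^2$, and recover the general case by $L^1$-approximation together with the $L^1$-continuity of $v\mapsto\Q(z[v],v)$. The genuine difference is the feasibility device. The paper does not project: it replaces $v$ by the truncation $v_\eps$ that vanishes where $|v|>1/\eps$ \emph{and} where $\uh$ lies in $(u_m,u_m+\eps)\cup(u_M-\eps,u_M)$; then $\uh+\sigma v_\eps$ is exactly admissible for $0<\sigma<\eps^2$, so $\Q(z[v_\eps],v_\eps)\ge0$ follows with no error terms at all, and a single limit $\eps\to0$ (handling the near-boundary and the amplitude truncations simultaneously) concludes. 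You instead keep $v$ bounded and project $\uh+\sigma v$ onto $[u_m,u_M]$, which obliges you to control the projection error $r_\sigma$; your two observations --- that $r_\sigma$ is supported in $\{u_m<\uh<u_M\}$, where Proposition \ref{Prop1order} gives $\Lambda=0$ a.e.\ (so $r_\sigma$ contributes nothing at first order), and that $\meas\{t:\dist(\uh(t),\{u_m,u_M\})<\sigma\|v\|_\infty\}\to0$ by continuity from above of the measure, whence $\|r_\sigma\|_1=o(\sigma)$ and the second-order contributions of $r_\sigma$ are $o(\sigma^2)$ by polarization of the $L^1$-bounded form $\Q(z[\cdot],\cdot)$ --- are both valid, so the argument goes through. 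What each route buys: the paper's truncation is lighter (exact feasibility, no error analysis needed), while yours costs the extra estimates but yields the inequality directly for every bounded critical direction and separates the two approximations cleanly; moreover, your justification of $z[v_n]\to z[v]$ in $C(0,T;\H)$ by linearity of $z[\cdot]$ plus the a priori bound \eqref{estz} is more direct than the paper's appeal to Lemma \ref{compactlemu} at the analogous limiting step.
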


\begin{proof}
  Let $v \in C(\uh)$ with $v \neq 0$. For 
$0<\eps<u_M-u_m$, we set
  \be
    v_{\varepsilon}(t):=
    \left\{
    \begin{array}{rl}
      0, &\text{if } \uh(t) \in (u_m,u_m+\varepsilon)\cup (u_M -
      \varepsilon, u_M), 
\text{ or } |v(t)| > 1/\eps,\\
      v(t), &\text{otherwise}.
    \end{array}
    \right.
  \ee
Then $DF(\uh)v_{\eps} =0$, and for 
  $\sigma \in (0,  \varepsilon^2)$,
we have that 
 $\uh + \sigma v_{\varepsilon}\in \cU_{\text{ad}}$.
  Hence, from Corollary~\ref{cor:expansion2}, we get for $z_{\varepsilon} := z[v_{\varepsilon}]$
  that
  \begin{align}
    \label{ineq:Q}
    0 \le 2 \lim_{\sigma \to 0} \frac{F(\uh + \sigma v_{\varepsilon}) - F(\uh)}{\sigma^2}=
    \Q(z_{\varepsilon},v_{\varepsilon}).
  \end{align}
  Since $v_\eps \to v$ in $L^1(0,T)$ when $\eps \rightarrow 0,$ then we obtain from Lemma
  \ref{compactlemu} that $ z_{\varepsilon} \rightarrow  z[v]$ in $C(0,T;\calh)$ and the assertion follows from \eqref{ineq:Q} and the continuity of~$\Q$.
\end{proof}

\subsection{Principle of Goh transform}\label{sec:csggt}
\subsubsection{Goh transform }
We now introduce the Goh transform on differential equations 
and on quadratic forms. We 
need to perform variants of it for equations 
\eqref{semdeltaPsi}-\eqref{sem-eta}
satisfied by 
$\delta \Psi$ and $\eta$. 
So, we consider a general setting.
Next let $y$ be the mild solution of
\be
\label{geneqs} 
\dot y + \A y = a y + b^0v,\qquad y(0)=0,
\ee
with
\be \label{geneqs_data1}
a \in L^\infty (0,T; \L(\calh)); \quad b^0 \in C(0,T;\calh),
\ee
and $b^0$ is a mild solution of 
\be  \label{geneqs_data2}
\dot b^0 + \A b^0 = g^0 \in L^2(0,T;\calh).
\ee
Given $v\in L^1(0,T)$ and $y$ the corresponding solution of
\eqref{geneqs}, let us consider the {\em Goh transform} associated
with \eqref{geneqs} as the mapping that, 
given $(a,b^0,g^0)$,  associates to the pair $(v,y)$
the pair 
$(w,\xi_y) \in AC(0,T) \times C(0,T;H)$ 
defined by
\be\label{GohT}
w(t) := \int_0^t v(s) \dd s,\quad \xi_y := y-wb^0.
\ee
We set 
$b^1 := ab^0 - g^0$
and note that the norms below are well-defined:
\be
\|a\|_\infty:= \|a\|_{L^\infty(0,T;\L(\H))}; \quad
\|b^i\|_s := \|b^i\|_{L^s(0,T;\H)}; \quad i=0,1; \; s\in [1,\infty],
\ee
although using the same notation for different norms, there is no danger of confusion.
In view of Corollary \ref{crprod.lc1} 
and Theorem \ref{Gronwall} we get:

\begin{lemma}
\label{eq-smgs}
Let \eqref{geneqs}-\eqref{geneqs_data2} hold.  
Then $\xi_y$ is the mild solution of 
\be
\label{eqxidefg}
\dot\xi_y + \A \xi_y  =  a\xi_y + w b^1; \quad \xi(0)=0. 
\ee
In addition there exists $c:\RR_+\rar\RR_+$
nondecreasing such that the constant 
$c_a := c(\|a\|_\infty )$ satisfies 
\begin{gather}
\label{estytx}
\| \xi_y \|_{C([0,T];\calh)} \leq c_a   \|b^1\|_2 \|w\|_2,\\
\label{esty}
\|y\|_2 \leq \left( 
T^{1/2}  c_a   \|b^1\|_2 + \|b^0\|_\infty \right) \|w\|_2.
\end{gather}
\end{lemma}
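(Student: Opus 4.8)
The plan is to establish the mild-solution characterization of $\xi_y$ first, and then obtain the two estimates as consequences.

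First I would verify equation \eqref{eqxidefg}. Since $w$ is a primitive of $v$ with $w(0)=0$, and $y$ is the mild solution of \eqref{geneqs} with right-hand side $b:=ay+b^0v$, I can apply Corollary \ref{crprod.lc1} to the product $wy$. This gives that $wy$ is a mild solution of $\ddt(wy)+\A(wy)=v\,y+w(ay+b^0v)$. Meanwhile $b^0$ is itself a mild solution of \eqref{geneqs_data2}, so another application of Corollary \ref{crprod.lc1} to $wb^0$ shows that $wb^0$ solves $\ddt(wb^0)+\A(wb^0)=v\,b^0+w\,g^0$. Subtracting these two identities and writing $\xi_y=y-wb^0$, the terms $v\,b^0$ cancel against part of $w b^0 v$, and after regrouping I obtain $\dot\xi_y+\A\xi_y=a(y-wb^0)+w(ab^0-g^0)=a\xi_y+wb^1$, using $b^1=ab^0-g^0$ and $y=\xi_y+wb^0$. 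The initial condition $\xi_y(0)=y(0)-w(0)b^0(0)=0$ holds since $w(0)=0$ and $y(0)=0$.

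Next I would derive \eqref{estytx}. Equation \eqref{eqxidefg} is exactly of the form treated in Theorem \ref{Gronwall} with forcing term $wb^1\in L^1(0,T;\H)$ and zero initial data, the bilinear control term being replaced by the bounded operator $a$. Running the same Gronwall argument as in that theorem, the $C([0,T];\H)$-norm of $\xi_y$ is bounded by a constant times $\|wb^1\|_{L^1(0,T;\H)}$, with the constant depending only on $\|a\|_\infty$, $T$, and the semigroup constants $c_\A,\lambda_\A$; this defines the nondecreasing function $c(\cdot)$ and the constant $c_a=c(\|a\|_\infty)$. Finally I would estimate $\|wb^1\|_{L^1(0,T;\H)}\le\|w\|_2\|b^1\|_2$ by Cauchy--Schwarz, which gives \eqref{estytx}.

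For \eqref{esty} I would use $y=\xi_y+wb^0$, so that by the triangle inequality $\|y\|_2\le\|\xi_y\|_2+\|wb^0\|_2$. The first term is bounded using \eqref{estytx} together with $\|\xi_y\|_2\le T^{1/2}\|\xi_y\|_{C([0,T];\H)}$, giving $T^{1/2}c_a\|b^1\|_2\|w\|_2$; the second term satisfies $\|wb^0\|_2\le\|b^0\|_\infty\|w\|_2$. Adding these yields the stated bound. I do not expect a serious obstacle here; the only point requiring care is the bookkeeping in the cancellation that produces \eqref{eqxidefg}, namely correctly tracking the two $vb^0$ contributions and verifying that the regularity hypotheses \eqref{geneqs_data1}--\eqref{geneqs_data2} are precisely what is needed to invoke Corollary \ref{crprod.lc1} twice (in particular that $g^0\in L^2(0,T;\H)\subset L^1(0,T;\H)$ so that $b^1\in L^2(0,T;\H)$ makes the norms well defined).
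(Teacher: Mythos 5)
Your estimates are fine: the semigroup/Gronwall bound $\|\xi_y\|_{C([0,T];\H)}\leq c(\|a\|_\infty)\|wb^1\|_{L^1(0,T;\H)}$, the Cauchy--Schwarz step, and the pointwise triangle inequality for $y=\xi_y+wb^0$ followed by taking the $L^2$ norm all coincide with the paper's proof. The problem is in your derivation of \eqref{eqxidefg}. You apply Corollary \ref{crprod.lc1} twice, obtaining the identities
\be
\ddt(wy)+\A(wy)=v\,y+w(ay+b^0v),
\qquad
\ddt(wb^0)+\A(wb^0)=v\,b^0+w\,g^0,
\ee
and then claim that ``subtracting these two identities'' and writing $\xi_y=y-wb^0$ yields the equation for $\xi_y$. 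It does not: the difference of these two identities is an equation for $wy-wb^0=w(y-b^0)$, which is not $\xi_y=y-wb^0$. Nor does the cancellation you describe occur there: the right-hand side of that difference is $vy+way+wvb^0-vb^0-wg^0$, and $wvb^0-vb^0=(w-1)vb^0\neq 0$ in general. Read literally, this step fails, even though the equation you write down at the end is the correct one.

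The repair is immediate, and it is what the paper implicitly does in citing Corollary \ref{crprod.lc1}: the first application of the product rule, to $wy$, is not needed at all. Keep only the identity for $wb^0$ and subtract it from the equation \eqref{geneqs} satisfied by $y$ itself; the difference of two mild solutions is the mild solution of the difference equation, by linearity of the Duhamel formula. This gives
\be
\ddt\left(y-wb^0\right)+\A\left(y-wb^0\right)
=(ay+b^0v)-(vb^0+wg^0)
=ay-wg^0
=a\xi_y+w(ab^0-g^0)
=a\xi_y+wb^1,
\ee
where now the $b^0v$ terms genuinely cancel, and $\xi_y(0)=0$ since $w(0)=0$ and $y(0)=0$. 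With this correction your argument is complete and is essentially the paper's: Corollary \ref{crprod.lc1} for the transformed equation, then the Gronwall-type estimate of Theorem \ref{Gronwall} with forcing $wb^1$, Cauchy--Schwarz, and the decomposition $y=\xi_y+wb^0$ for \eqref{esty}.
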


\begin{proof}
By the semigroup theory there exists $c:\RR_+\rar\RR_+$
nondecreasing such that 
\be
\begin{aligned}
\| \xi_y \|_{C([0,T];\calh)} 
\leq c(\|a\|_\infty)   \|b^1\, w\|_{L^1(0,T;\H)}
\leq  c(\|a\|_\infty ) \|b^1\|_2 \|w\|_2,
\end{aligned}
\ee 
so that \eqref{estytx} holds.
Since $y=\xi_y+wb^0$, we get 
\be
\|y(t)\|_\H \leq c(\|a\|_\infty)   \|b^1\|_2 \|w\|_2 + \|b^0\|_\infty
|w(t)|,\qquad \text{for a.a. } t\in (0,T),\\
\ee
implying \eqref{esty}.
\end{proof}

\if{
\begin{remark} 
\label{eq-smgs-r}
Let $E$ be a subspace of $\H$ satisfying the restriction property for
the semigroup generated by $\A$,  and such that 
the restriction $a'(t)$ of $a(t)$ to $E$ belongs to 
$L^\infty(0,T;\L(E))$ and $b^1\in L^2(0,T;E)$.
Then we have similar estimates as above with 
$E$ instead of $\H$. That is, 
there exists $c':\RR_+\rar\RR_+$
nondecreasing such that 
$c'_a := c'(\|a\|_{L^\infty(0,T;\L(\H))})$ satisfies 
\begin{gather}
\label{estytxe}
\| \xi_y \|_{C([0,T];E)} \leq c'_a   \|b^1\|_{L^2(0,T;E)}  \|w\|_2,\\
\label{estye}
\|y\|_{L^2(0,T;E)} \leq \left( 
T^{1/2}  c_a   \|b^1\|_{L^2(0,T;E)}  + \|b^0\|_{L^\infty(0,T;E)}  \right) \|w\|_2.
\end{gather}
\end{remark}
} \fi

\begin{remark}
The Goh transform has the same structure 
as in the ODE case (see e.g. equations (27)-(30) in \cite{ABDL12}).
 In fact, if we write the equation \eqref{geneqs} in the form
$
\dot y = (a-\A)y+b^0 v,$
in view of \eqref{eqxidefg}, $\xi_y$ defined by Goh transform \eqref{GohT} is solution of 
$
\dot\xi_y = (a-\A) \xi_y + w {b}^1,
$
with $b^1 = (a-\A)b^0 - \dot b^0 = ab^0 - g^0$.
\end{remark}

We assume the existence of 
$E_1\subset \H$ with continuous inclusion
having the restriction property, and such that
\be
\dom(\A) \subset E_1.
\ee
We can use $\B_2$ to denote the restriction of $\B_2$ to $E_1,$  with no risk of confusion, and let us write $\B^k_i$ to refer to $(\B_i)^k.$
In the remainder of the paper we make the following hypothesis:
\be
\label{hyp-Goh-tr1}
\left\{ \ba{lll}
{\rm (i)} &
\B_1 \in \dom (\A),\;  
 \\ {\rm (ii)} &
\B_2 \dom(\A) \subset \dom(\A),\quad 
\B_2^* \dom(\A^*) \subset \dom(\A^*),\;\; 
\\
{\rm (iii)} &
\text{for $k=1,2:$ 
$\left[\A, \B^k_2\right]$ has a continuous
extension to $E_1$,}
\\
& \text{denoted by $M_k$, 
}
\\ {\rm (iv)} &
f \in L^\infty(0,T;\H);  \quad 
M^*_k \ph\in L^\infty(0,T;\H^*), \; k=1,2,
\\ {\rm (v)} &
\Psih\in L^2(0,T;E_1); \;\; 
[M_1,\B_2] \Psih \in L^\infty(0,T;\H).

\ea\right.
\ee
We refer to Section \ref{sec:applschr},
where examples of problems, where these hypotheses are easily
checked, are provided.

\begin{remark}
Observe that  \eqref{hyp-Goh-tr1} (ii) implies that
\be
\B^k_2 \dom(\A) \subset \dom(\A),\quad 
(\B^k_2)^* \dom(\A^*) \subset \dom(\A^*),\qquad \text{for } k=1,2.
\ee
So, $[\A, \B_2]$ 
is well-defined as operator
with domain $\dom(\A)$, and point (iii) makes sense.
\end{remark}

\subsubsection{Goh transform for $z$}\label{sec:Goh1}
Let $\uh\in \calu$ have associated state $\Psih$.
Recall that $z$ is solution of the linearized state equation 
\eqref{semg5lin}. Set
\be
\calb(t) := \B_1 + \B_2 \Psih(t).
\ee 
We apply Corollary \ref{lem:int_by_parts-1.c}
with $B:= \B_2$ and $y:= \Psih$, so that
$(a,b) = (\uh\B_2,f+\uh \B_1)$. 
Then $\Phi:= \B_2 \Psih$ satisfies
\be
\label{phi-zz}
\dot \Phi + \A \Phi 
= 
\B_2 ( f + \uh \B) + M_1 \Psih.
\ee
Setting $\Phi':= \B=\B_1 + \Phi,$ 
thanks to \eqref{hyp-Goh-tr1}, we get
\be
\label{dphipaphip}
\dot \Phi' + \A \Phi'
= g_z, \quad
\text{where } g_z :=
 \A \B_1 + \B_2 ( f + \uh \B) + M_1  \Psih.
\ee
We next apply Lemma \ref{eq-smgs} to 
the linearized state equation \eqref{semg5lin},
with here the
pair $(a,b)$ corresponding to 
$(a_z,b_z)= (\uh \B_2,\calb)$.
Clearly $a_z\in L^\infty(0,T;\call(\H))$,
$b_z\in C(0,T;\H)$, and by 
\eqref{dphipaphip}, we have that 
$\dot \Phi' + \A \Phi'$ belongs to 
$L^2(0,T;\H)$ in the sense of mild solutions. 
It follows that the dynamics for 
$\xi:=z-w\B$ with $w:=\int_0^t v(s)\dd s$ reads
\be
\label{diffeqxi}
\dot\xi + \A \xi  = \uh \B_2 \xi + w b^1_z; 
\ee
where
\be
\label{equ-bunz}
b^1_{z} 
= a_z b_z - g_z = 
-\B_2 f -  M_1 \Psih  - \A \B_1.
\ee

\begin{proposition}\label{prop:estimatez}
The solution $z$ of the linearized state equation \eqref{semg5lin} satisfies the following estimate
\be 
\label{est1}
\|\xi\|_{C(0,T;\H)} + \|z\|_{L^2(0,T;\H)}
=   O\big(  \|w\|_2 \big).
\ee

\end{proposition}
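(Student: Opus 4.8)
The plan is to observe that the computation preceding the statement has already cast the $\xi$-dynamics into exactly the abstract form treated by Lemma \ref{eq-smgs}; the proposition then reduces to checking the integrability of the data and quoting the two estimates of that lemma. Concretely, \eqref{diffeqxi} reads $\dot\xi + \A\xi = \uh\B_2\,\xi + w\,b^1_z$ with $\xi(0)=0$, which is precisely \eqref{eqxidefg} for the identification $y \leftarrow z$, $a \leftarrow a_z = \uh\B_2$, $b^0 \leftarrow \calb = \B_1 + \B_2\Psih$, and $b^1 \leftarrow b^1_z = -\B_2 f - M_1\Psih - \A\B_1$ from \eqref{equ-bunz}. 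Here $a_z \in L^\infty(0,T;\L(\H))$ because $\B_2 \in \L(\H)$ and $\uh$ is bounded, while $b^0 = \calb \in C(0,T;\H)$ since $\Psih \in C(0,T;\H)$; and \eqref{dphipaphip} exhibits $\calb$ as the mild solution of $\dot\calb + \A\calb = g_z$. Thus \eqref{geneqs}--\eqref{geneqs_data2} will be in force once $b^1_z, g_z \in L^2(0,T;\H)$ are established.

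The crux is therefore to verify $b^1_z \in L^2(0,T;\H)$, which I would do term by term using hypothesis \eqref{hyp-Goh-tr1}. The term $\B_2 f$ lies in $L^\infty(0,T;\H)\subset L^2(0,T;\H)$ by (iv) together with $\B_2 \in \L(\H)$; the commutator term $M_1\Psih$ lies in $L^2(0,T;\H)$ because $M_1 = \overline{[\A,\B_2]} \in \L(E_1,\H)$ by (iii) and $\Psih \in L^2(0,T;E_1)$ by (v); and $\A\B_1$ is a constant element of $\H$ by (i). The same three facts, together with boundedness of $\uh$, also give $g_z = \A\B_1 + \B_2(f+\uh\calb) + M_1\Psih \in L^2(0,T;\H)$, confirming \eqref{geneqs_data2}.

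It then remains to invoke Lemma \ref{eq-smgs} directly. Its estimate \eqref{estytx} gives $\|\xi\|_{C([0,T];\H)} \le c_a\,\|b^1_z\|_2\,\|w\|_2$, and \eqref{esty} gives $\|z\|_2 \le (T^{1/2}c_a\,\|b^1_z\|_2 + \|\calb\|_\infty)\,\|w\|_2$. Since $c_a = c(\|a_z\|_\infty)$, $\|b^1_z\|_2$ and $\|\calb\|_\infty$ are finite constants independent of $v$ (hence of $w$), both right-hand sides are $O(\|w\|_2)$, and adding them yields \eqref{est1}. The only genuine obstacle is the bookkeeping of the second step --- matching each summand of $b^1_z$ and $g_z$ to the correct clause of \eqref{hyp-Goh-tr1} --- the delicate summand being the commutator $M_1\Psih$, whose $L^2$-control is exactly what the bracket extension property (iii) and the higher regularity $\Psih \in L^2(0,T;E_1)$ in (v) were introduced to provide.
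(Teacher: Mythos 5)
Your proof is correct and follows essentially the same route as the paper: the paper's own (very terse) proof likewise reduces the statement to Lemma \ref{eq-smgs} via the identifications already made before the proposition, noting only that the restriction property and hypothesis \eqref{hyp-Goh-tr1} guarantee the required integrability of $b^1_z$. Your term-by-term check that $b^1_z, g_z \in L^2(0,T;\H)$ is in fact slightly more accurate than the paper's claim that $b^1_z \in L^\infty(0,T;\H)$ (hypothesis \eqref{hyp-Goh-tr1}(v) only gives $M_1\Psih \in L^2(0,T;\H)$), but $L^2$ is all that Lemma \ref{eq-smgs} needs, so this discrepancy is harmless.
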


\begin{proof}
This follows from the restriction property and since
hypothesis \eqref{hyp-Goh-tr1} guarantees that $b^1_z \in L^\infty(0,T;\H)$. 
\end{proof}

\subsection{Goh transform of the quadratic form} 
Let again $\uh\in\calu$, and set 
$\Psih=\Psi[\uh]$ and $\ph=p[\uh]$. 
We recall the definition of the operator $M$ in
\eqref{def-m-oper}. 
Consider the space
\be
\label{def--ww}
W := \left(L^2(0,T;E_1) \cap C([0,T];\H) \right)
\times L^2(0,T) \times \cR.
\ee
We introduce the continuous quadratic form over $W,$ defined by 
\be
\label{Omega}
\hatQ(\xi,w,h) = \hatQ_T(\xi,h) +\hatQ_a(\xi,w)+\hatQ_b(w),
\ee
where 
$\hatQ_b(w):= \int_0^T w^2(t) R(t)\dd t$ and 
\begin{align}\label{OmegaT}
\hatQ_T(\xi,h)&:= q_T(\xi(T) + h \calb(T) )
 + h^2 \la \ph(T),\B_2\B_{1}+\B_2^2 \Psih(T)\ra +h \la \ph(T),\B_2 \xi(T) \ra,\\
\hatQ_a(\xi,w)&:= \int_0^T \Big( q(\xi) + 2 w  \la Q\xi,\B \ra
 + 2 w\la  Q(\Psih-\Psi_d),\B_2\xiz\ra - 
2w \la M^*_1 \ph, \xiz\ra   
\Big) \dd t,
\end{align}
with $R \in L^\infty(0,T)$ given by
\be
\label{R}
\left\{ 
\begin{aligned}
R(t)&:=  q(\B) + \la Q(\Psih-\Psi_d),\B_2\B \ra + \la\ph(t),r(t) \ra,
\\
r(t) &:= \B_2^2 f(t)  -\A\B_2\B_1+2\B_2\A\B_1 
- 
\big[M_1,\B_2\big] \Psih.
\end{aligned}
\right. \ee

\begin{theorem}\label{thm:Q_Omega}
  For $v\in L^1(0,T)$ and $w \in AC(0,T)$
given by Goh transformation \eqref{GohT}, there holds 
  \be
  \Q(z[v],v)=\hatQ(\xi[w],w,w(T)).
  \ee
\end{theorem}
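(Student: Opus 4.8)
The plan is to start from the definition of $\Q(z,v)$ in \eqref{tildeQ} and substitute the Goh transform relation $z = \xi + w\calb$ (where $\xi = \xi[w]$ and $\calb = \B_1 + \B_2\Psih$), then expand everything into terms that are quadratic in $\xi$, bilinear in $(w,\xi)$, and quadratic in $w$, matching them against $\hatQ_T$, $\hatQ_a$ and $\hatQ_b$ respectively. First I would write
\[
\Q(z,v) = \int_0^T \Big( q(\xi + w\calb) + 2v\la\ph,\B_2(\xi+w\calb)\ra\Big)\dd t + q_T(\xi(T)+w(T)\calb(T)).
\]
Expanding the quadratic form $q(\xi+w\calb) = q(\xi) + 2w\la Q\xi,\calb\ra + w^2 q(\calb)$ using bilinearity and symmetry of $Q$ immediately produces the $q(\xi)$ term of $\hatQ_a$, the $2w\la Q\xi,\calb\ra$ term of $\hatQ_a$, and the $w^2 q(\calb)$ contribution to $R$ in $\hatQ_b$. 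Likewise the endpoint term $q_T(\xi(T)+w(T)\calb(T))$ expands into the $q_T(\xi(T)+h\calb(T))$ piece of $\hatQ_T$ with $h = w(T)$. So the algebraic decomposition of the purely quadratic pieces is routine.

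The genuinely substantive step is handling the cross term $\int_0^T 2v\la\ph,\B_2(\xi+w\calb)\ra\dd t$, because it involves $v$ rather than $w$, and $v = \dot w$. The idea is to integrate by parts in time to convert the $v$-integral into $w$-integrals, which is exactly what Corollary \ref{lem:int_by_parts-1.c} is designed for. I would split $\B_2(\xi + w\calb) = \B_2\xi + w\B_2\calb$ and treat the two resulting integrands separately. For the term $2\int_0^T v\,w\,\la\ph,\B_2\calb\ra\dd t$, since $v = \dot w$ we have $v\,w = \tfrac12\frac{\dd}{\dd t}(w^2)$, so another integration by parts (using that $w(0)=0$) turns this into $-\int_0^T w^2 \frac{\dd}{\dd t}\la\ph,\B_2\calb\ra\dd t$ plus an endpoint term $w(T)^2\la\ph(T),\B_2\calb(T)\ra$; the endpoint term matches $h^2\la\ph(T),\B_2\B_1+\B_2^2\Psih(T)\ra$ in $\hatQ_T$, and the derivative $\frac{\dd}{\dd t}\la\ph,\B_2\calb\ra$ must be computed using the costate equation \eqref{semg8} and the equation \eqref{phi-zz} for $\Phi=\B_2\Psih$, feeding into the $r(t)$ and $R(t)$ terms of $\hatQ_b$.

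For the remaining term $2\int_0^T v\,\la\ph,\B_2\xi\ra\dd t$, I would apply Corollary \ref{lem:int_by_parts-1.c}(ii) with the operator $B = \B_2$ acting on the relevant forward variable and $\varphi = w$, so that $\dot\varphi = v$. This is where the commutator/bracket extension hypotheses \eqref{hyp-Goh-tr1} and the operator $M_1 = \overline{[\A,\B_2]}$ enter: the IBP formula \eqref{lem:int_by_parts-2tr} produces precisely the terms $-2w\la M_1^*\ph,\xi\ra$ and $2w\la Q(\Psih-\Psi_d),\B_2\xi\ra$ appearing in $\hatQ_a$, the endpoint term $h\la\ph(T),\B_2\xi(T)\ra$ in $\hatQ_T$, together with further $w^2$-contributions to $R$. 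The main obstacle I anticipate is bookkeeping: carefully tracking which pieces of the various IBP applications land in $\hatQ_T$ (endpoint), $\hatQ_a$ (bilinear in $w,\xi$) and $\hatQ_b$ (the scalar $R(t)$), and in particular verifying that the $r(t)$ formula in \eqref{R}—with its combination $\B_2^2 f - \A\B_2\B_1 + 2\B_2\A\B_1 - [M_1,\B_2]\Psih$—emerges correctly from differentiating $\la\ph,\B_2\calb\ra$ and from the second-order commutator $M_2$ relating to $\B_2^2$. Once all terms are identified and the regularity hypotheses \eqref{hyp-Goh-tr1} are invoked to justify each integration by parts (ensuring the relevant quantities lie in the right $L^s$ spaces so the IBP lemmas apply), the identity $\Q(z[v],v)=\hatQ(\xi[w],w,w(T))$ follows by collecting terms.
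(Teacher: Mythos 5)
Your proposal is correct and follows essentially the same route as the paper's proof: substitute $z=\xi+w\calb$ into the $q$- and $q_T$-terms, split the cross term into $\int_0^T v\,w\,\la\ph,\B_2\calb\ra\,\dd t$ and $\int_0^T v\,\la\ph,\B_2\xi\ra\,\dd t$, and convert each $v$-integral into $w$-integrals by repeated application of Corollary \ref{lem:int_by_parts-1.c} (with $\dot\varphi=vw$, i.e. $\varphi=\half w^2$, for the first, and $\varphi=w$ for the second). The only imprecision is your citation of \eqref{phi-zz}: the dynamics needed for the $\B_2^2\Psih$ part of $\la\ph,\B_2\calb\ra$ is not that of $\B_2\Psih$ but the one obtained from the corollary applied with $B=\B_2^2$ (the paper's $g_2$ term), which is precisely where $M_2$ enters --- a point you acknowledge yourself at the end, so this is cosmetic rather than a gap.
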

\begin{proof}
  For the contributions of the terms with 
$q(\cdot)$ and $q_T(\cdot)$,
we replace $z$ by $\xiz + w \B$.
For the contribution of the bilinear term in \eqref{tildeQ} we proceed
 as follows. There holds 
 \begin{equation}\label{step1}
\begin{aligned}
  \int_0^T v(t) \la \ph(t),\B_2 z(t)\ra\dd t & = \int_0^T v(t)w(t) \la \ph(t), \B_2 \B(t) \ra\dd t  + \int_0^T v(t) \la \ph(t), \B_2 \xiz(t)\ra \dd t\\
&=:l_1(w)+l_2(w).
  \end{aligned}
\end{equation}
There holds 
\begin{equation}\label{step2}
  \begin{aligned}
    l_1(w) &=\int_0^T v(t)w(t) \la \ph(t), \B_2 \B_1
  \ra + v(t)w(t) \la \ph(t), \B_2^2 \Psih(t)
  \ra\dd t \\ &=:g_1( w) + g_2(w).
\end{aligned}
\end{equation}
We apply several times Corollary \ref{lem:int_by_parts-1.c} 
for  $a:= \uh \B_2$ and 
(as can be checked in each case) $[B,a]=0$, 
and to begin with 
\be
y:=\B_1; \;\; B:=\B_2; \;\; b := \A\B_1 - \uh \B_2 \B_1.
\ee
By \eqref{hyp-Goh-tr1} and since $\uh\in L^\infty(0,T)$,
\eqref{yp}-\eqref{yp-bis} holds. We get:
\be
\begin{aligned}
  g_1(w) &=    \half w(T)^2 \la \ph(T),\B_2\B_{1}\ra
- \half\int_0^T w(t)^2 \la\ph(t), \B_2 (
  \A\B_1-\uh\B_2 \B_1) \ra\dd t \\ & \quad 
+\half\int_0^T w(t)^2 \la Q(\Psih(t) - \Psi_d(t)),\B_2\B_1 \ra
  \dd t  - \half\int_0^T
  w(t)^2 \la\ph(t),M_1 \B_1 \ra \dd t.
\end{aligned}
\ee
Applying (with similar arguments)
Corollary \ref{lem:int_by_parts-1.c} with
\be
y := \Psih; \;\; B := \B^2_2; \;\; 
b := f +\uh \B_1,
\ee
we get 
\be
\begin{aligned}
g_2(w)&= 
\half w(T)^2 \la \ph(T),\B_2^2 \Psih(T)\ra 
+ \half \int_0^T w(t)^2 
\la Q(\Psih(t) - \Psi_d(t)),\B_2^2 \Psih(t) \ra \dd t
\\
& \quad -\half \int_0^T w(t)^2 ( M_2^* \ph(t), \Psih(t) \ra\dd t 
 - \half\int_0^T w(t)^2 (\ph(t), \B_2^2 ( f(t)+\uh(t) \B_1)  \ra\dd t.
\end{aligned}
\ee
Finally setting 
\be
y := \xiz; \;\; B := \B_2; \;\; 
b := w b^1_z,
\ee
we get with Corollary \ref{lem:int_by_parts-1.c} 
with $b^1_z$ defined in \eqref{equ-bunz}:
\begin{equation}\label{step3}
  \begin{aligned}
    l_2(w)&=   w(T) \la\ph(T),\B_2 \xi_{zT} \ra + \int_0^T  w(t)
    \la Q(\Psih(t)-\Psi_d(t)),\B_2\xiz(t)\ra  \dd t
   \\ & \quad  - \int_0^T w(t)^2 \la\ph (t),\B_2 b^1_z(t)\ra \dd t
    - \int_0^T w(t)\la M^*_1 \ph(t), \xiz(t)\ra \dd t. 
\end{aligned}
\end{equation}
Combining the previous equalities, 
 the result follows.
\end{proof}

Given $\uh\in\calu_{ad}$, 
we write $PC_2(\uh)$ for the closure in the $L^2\times \RR$--topology  of the set
\be
\label{def-pc2}
PC(\uh) := \{(w,h) \in W^{1,\infty}(0,T) \times \RR, \dot w \in 
C(\uh); \; w(0)=0, \; w(T)=h  \}.
\ee
The final value of $w$ becomes an independent variable when we
consider this closure.

\begin{lemma}\label{lem3}
Let $\uh$ be a weak minimum for problem \eqref{problem:OC}.
Then 
  \be
\label{lem3-ome}
  \hatQ(\xi[w],w,h) \ge 0\quad \text{for all } (w,h) \in PC_2(\uh).
  \ee
\end{lemma}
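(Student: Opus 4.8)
\emph{Proof plan.} The plan is to prove \eqref{lem3-ome} first on the dense set $PC(\uh)$, where it reduces to the necessary condition of Theorem \ref{thm:nec_sec_ord_cond} read through the identity of Theorem \ref{thm:Q_Omega}, and then to pass to the $L^2\times\RR$--closure by continuity of the functional $(w,h)\mapsto \hatQ(\xi[w],w,h)$.

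First I would fix $(w,h)\in PC(\uh)$ and set $v:=\dot w$. By the definition \eqref{def-pc2} this means $v\in C(\uh)$, $w(0)=0$ and $h=w(T)$; hence $w(t)=\int_0^t v(s)\,\dd s$ is exactly the function generated by the Goh transform \eqref{GohT} of $v$, and $\xi[w]$ is the corresponding $\xi$. Theorem \ref{thm:Q_Omega} then gives $\Q(z[v],v)=\hatQ(\xi[w],w,w(T))=\hatQ(\xi[w],w,h)$. Since $\uh$ is a weak minimum and $v\in C(\uh)$, Theorem \ref{thm:nec_sec_ord_cond} yields $\Q(z[v],v)\ge 0$, so that $\hatQ(\xi[w],w,h)\ge 0$ for every $(w,h)\in PC(\uh)$.

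It then remains to extend the inequality to $PC_2(\uh)$, the $L^2\times\RR$--closure of $PC(\uh)$. For this I would show that $(w,h)\mapsto\hatQ(\xi[w],w,h)$ is continuous on $L^2(0,T)\times\RR$. The $\xi$--dependence factors through the solution map $w\mapsto\xi[w]$ of \eqref{eqxidefg}, which is linear and, by Lemma \ref{eq-smgs} together with Proposition \ref{prop:estimatez}, bounded from $L^2(0,T)$ into $C([0,T];\H)\subset L^2(0,T;\H)$. Reading off the three pieces of $\hatQ$ in \eqref{Omega}--\eqref{R}, each is then a continuous quadratic or bilinear form of $(\xi[w],w,h)$ in this topology: the terminal terms of $\hatQ_T$ involve $\xi(T)$, which depends continuously on $w$ via the $C([0,T];\H)$--bound; the integrals in $\hatQ_a$ pair $w$ with $\xi$ in $L^2(0,T;\H)$; and $\hatQ_b$ is the action of $R\in L^\infty(0,T)$ on $w^2$. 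Taking $(w_k,h_k)\in PC(\uh)$ with $(w_k,h_k)\to(w,h)$ in $L^2\times\RR$ and passing to the limit in $\hatQ(\xi[w_k],w_k,h_k)\ge 0$ then gives \eqref{lem3-ome}.

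The one point I expect to require care is the term $-\int_0^T 2w\,\la M^*_1\ph,\xi\ra\,\dd t$ of $\hatQ_a$: a priori $M_1^*$ sends $\H^*$ only into $E_1^*$, so the pairing would demand $E_1$--regularity of $\xi$, which the $\H$--estimates of Proposition \ref{prop:estimatez} do not supply. This is exactly what hypothesis \eqref{hyp-Goh-tr1}(iv) is for: it guarantees $M^*_1\ph\in L^\infty(0,T;\H^*)$, so the pairing may be interpreted as an $\H$--duality and only $L^2(0,T;\H)$--regularity of $\xi$ is needed. With that observation the remaining continuity estimates, and hence the passage to the closure, are routine.
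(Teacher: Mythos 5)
Your proof is correct and follows essentially the same route as the paper's: establish the inequality on $PC(\uh)$ by combining Theorem \ref{thm:nec_sec_ord_cond} with the Goh-transform identity of Theorem \ref{thm:Q_Omega}, then pass to the $L^2\times\RR$--closure using the continuity of $\hatQ$ guaranteed by \eqref{hyp-Goh-tr1}. Your explicit treatment of the $\la M_1^*\ph,\xi\ra$ pairing via \eqref{hyp-Goh-tr1}(iv) simply spells out what the paper's terse citation of \eqref{hyp-Goh-tr1} leaves implicit.
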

\begin{proof}
  Let $(w,h) \in PC(\uh)$ with 
 $\dot w = v \in C(\uh)$. By Theorem \ref{thm:nec_sec_ord_cond},
  $\Q(z[v],v)\ge 0$, and so
  by Theorem \ref{thm:Q_Omega}, 
$0 \leq \Q(z[v],v) = \hatQ(\xi[w],w,w(T))$.
By \eqref{hyp-Goh-tr1},
$\hatQ(\xi,w,h)$
has a continuous extension to the space $W$ 
defined in \eqref{def--ww}.
The conclusion follows.
\end{proof}

\begin{definition}[Singular arc]
  A control $u \in \calu_{ad}$ is said to have a {\em singular arc} over
 $(t_1,t_2)$, with $0\leq t_1 < t_2 \leq T$,  if, 
for all $\theta\in (0,\half(t_2-t_1)),$ there exists $\eps>0$ such that
  \be
  u(t) \in
  [u_m+\varepsilon,u_M-\varepsilon],\quad \text{for a.a. } t\in (t_1+\theta,t_2-\theta).
  \ee
  We may also say that $(t_1,t_2)$ is a singular arc itself.
  We call $(t_1,t_2)$ a {\em lower boundary arc} if $u(t)=u_m$ for
  a.a. $t \in (t_1,t_2)$, and an {\em upper boundary arc} if $u(t)=u_M$
  for a.a. $t\in (t_1,t_2)$. 
We sometimes simply call them boundary arcs.
We say that a boundary arc $(c,d)$ is {\em initial} if $c=0$, and {\em final} if $d=T$. 
\end{definition}

\begin{corollary}
\label{cor2}
Let $\uh$ be a weak minimum for problem \eqref{problem:OC}. 
 Assume that 
\be
\Psi_d\in L^\infty(0,T,\calh),
\ee
and that 
\be
\label{comp-hyp-xi}
\text{the mapping $w\mapsto \xi[w]$ is compact from
$L^2(0,T)$ to  $L^2(0,T; \H)$.}
\ee
Let $(t_1,t_2)$ be a singular arc. Then
$R \in L^\infty(0,T;\H)$ defined in \eqref{R} satisfies
\be
\label{cor2-Rpos}
R(t) \ge 0 \quad \text{for a.a. }t\in (t_1,t_2).
\ee
\end{corollary}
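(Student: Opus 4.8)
The plan is to argue by contradiction, feeding into the second order necessary inequality of Lemma~\ref{lem3} a family of feasible variations that concentrate at a Lebesgue point of $R$ in the interior of the singular arc. Under such a concentration every term of the Goh-transformed quadratic form $\hatQ$ in \eqref{Omega} becomes negligible except $\hatQ_b(w)=\int_0^T w^2R\,\dd t$, so the sign condition $\hatQ\ge0$ collapses to $R(\tau)\ge0$ at the chosen point, and letting $\tau$ range over all Lebesgue points of $R$ gives \eqref{cor2-Rpos}.

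First I would record feasibility. On $(t_1,t_2)$ one has $\uh(t)\in(u_m,u_M)$, so $t$ belongs to neither contact set and Proposition~\ref{Prop1order} forces $\Lambda=0$ a.e.\ there; hence any $v$ supported in a compact subinterval of $(t_1,t_2)$ lies in $C(\uh)$. Fixing $\rho\in C^\infty_c((0,1))$ with $\|\rho\|_{L^2(0,1)}=1$ and a Lebesgue point $\tau\in(t_1,t_2)$ of $R$, I set $w_n(t):=\sqrt n\,\rho\big(n(t-\tau)\big)$. For $n$ large $\supp w_n\subset(t_1,t_2)$, so $\dot w_n\in C(\uh)$ and $w_n(0)=w_n(T)=0$; thus $(w_n,0)\in PC(\uh)\subset PC_2(\uh)$ and Lemma~\ref{lem3} gives $\hatQ(\xi[w_n],w_n,0)\ge0$.

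The estimates rest on the scaling $\|w_n\|_2=1$ together with $\|w_n\|_1=\|\rho\|_1/\sqrt n\to0$. Writing $\xi_n:=\xi[w_n]$ and using the mild form of \eqref{diffeqxi} with \eqref{cond:bounded}, the forcing satisfies
\[
\Big\|\int_0^t e^{-(t-s)\A}w_n(s)\,b^1_z(s)\,\dd s\Big\|_\H
\le c_\A e^{\lambda_\A T}\,\|w_n\|_2\,\|b^1_z\|_{L^2(\tau,\tau+1/n;\H)}\longrightarrow0,
\]
since $b^1_z\in L^2(0,T;\H)$ by \eqref{equ-bunz}; the Gronwall argument of Theorem~\ref{Gronwall} then yields $\|\xi_n\|_{C([0,T];\H)}\to0$. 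Each summand of $\hatQ_a(\xi_n,w_n)$ in \eqref{OmegaT} is bounded by a constant times $\|\xi_n\|_\infty^2$ or $\|w_n\|_1\|\xi_n\|_\infty$---the coefficients $Q(\Psih-\Psi_d)$ and $M^*_1\ph$ being bounded thanks to $\Psi_d\in L^\infty$ and \eqref{hyp-Goh-tr1}(iv)---so $\hatQ_a(\xi_n,w_n)\to0$; and since $w_n(T)=0$ the terminal part is $\hatQ_T(\xi_n,0)=q_T(\xi_n(T))=O(\|\xi_n\|_\infty^2)\to0$. Finally $\hatQ_b(w_n)=\int_0^1\rho(r)^2R(\tau+r/n)\,\dd r\to R(\tau)$ by the Lebesgue-point property. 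Passing to the limit in $0\le\hatQ(\xi_n,w_n,0)$ gives $R(\tau)\ge0$, and since a.e.\ point of $(t_1,t_2)$ is such a $\tau$, \eqref{cor2-Rpos} follows.

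I expect the genuine difficulty to lie in the terminal term $\hatQ_T$. A merely oscillating sequence $w_n\rightharpoonup0$ with $\|w_n\|_1$ bounded would, via the compactness hypothesis \eqref{comp-hyp-xi}, give only $\xi_n\to0$ in $L^2(0,T;\H)$; this is enough for $\hatQ_a$ but not for the quadratic terminal $q_T(\xi_n(T))$, which is not weakly continuous in infinite dimensions. Choosing a concentrating rather than oscillating sequence is precisely what resolves this: $\|w_n\|_1\to0$ makes the forcing small in $C([0,T];\H)$, so $\xi_n(T)\to0$ strongly and $q_T(\xi_n(T))\to0$ directly, without appeal to \eqref{comp-hyp-xi}.
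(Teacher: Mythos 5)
Your proof is correct, and it follows a genuinely different route from the paper's. Both arguments start from the necessary condition $\hatQ\ge 0$ on $PC_2(\uh)$ (Lemma \ref{lem3}), but the paper then proceeds abstractly: it restricts $\hatQ$ to the pairs $(w,h)$ with $w$ supported in the singular arc, notes that a nonnegative continuous quadratic form is convex and hence weakly lower semicontinuous, invokes the compactness hypothesis \eqref{comp-hyp-xi} to make the $\xi$-dependent terms weakly continuous, and concludes that $\hatQ_b(w)=\int_0^T R(t)\,w(t)^2\,\dd t$ must itself be weakly l.s.c.\ on $L^2$, which by the classical characterization \cite[Theorem 3.2]{MR0046590} holds iff $R\ge 0$ a.e.\ on the arc. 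You instead run an explicit concentration (needle-variation) argument at a Lebesgue point $\tau$: the scaling $\|w_n\|_2=1$, $\|w_n\|_1\to 0$ forces $\xi[w_n]\to 0$ strongly in $C([0,T];\H)$, so every term of $\hatQ$ except $\hatQ_b$ vanishes in the limit and the necessary condition collapses to $R(\tau)\ge 0$ pointwise. The paper's route is shorter and is coherent with the Legendre-form machinery reused later in the sufficiency result (Theorem \ref{thm-sufcond}); your route is elementary, self-contained, and strictly more economical in hypotheses, since it never uses \eqref{comp-hyp-xi} --- only $\Psi_d\in L^\infty(0,T;\calh)$ and \eqref{hyp-Goh-tr1} enter your estimates --- so it actually establishes the corollary under weaker assumptions. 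Your closing remark also identifies a real soft spot of the abstract argument: compactness of $w\mapsto\xi[w]$ into $L^2(0,T;\H)$ does not by itself render the terminal term $q_T\big(\xi(T)+h\calb(T)\big)$ of $\hatQ_T$ weakly continuous, since it involves the pointwise value $\xi[w](T)$; your strong convergence $\xi[w_n](T)\to 0$ in $\H$ disposes of that term directly.
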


\begin{proof}
Consider the set 
\be
P := \left\{ (w,h) \in PC_2(\uh); \quad w(t)=0 \;\;
\text{a.e. over $(0,T) \setminus (t_1,t_2)$.} \right\}.
\ee
By definition,  $P \subset   PC_2(\uh)$ and, therefore,
$$
\hatQ(\xi[w],w,h) \geq 0,\quad \text{for all } (w,h) \in P.
$$
Over $P$, $\hatQ$ is nonnegative (and therefore convex),
and continuous, and hence, is weakly l.s.c. 
By \eqref{comp-hyp-xi}, the terms of  $\hatQ$ 
where $\xi$ is involved are weakly continuous.
So, $\hatQ_b$  must be 
weakly l.s.c. over $P$. 
As it is well known, see e.g.  \cite[Theorem 3.2]{MR0046590}, 
this holds iff $R(t)\geq 0$ a.e. on $(t_1,t_2).$
The conclusion follows.
\end{proof}

\section{Second order sufficient optimality conditions}
\label{sec-suffici}

Given $\uh$ and $u$ in $\calu_{ad}$ with associated states 
$\Psih$ and $\Psi$ resp., 
setting $v:=u-\uh$ and $z:=z[v]$, 
we recall that $\delta \Psi:= \Psi-\hat\Psi$ and
$\eta := \delta\Psi-z$ are solution of 
\eqref{semdeltaPsi} and \eqref{sem-eta}, resp.

\subsection{Goh transform for $\delta\Psi$}
\label{secgtdelp}
We apply Lemma \ref{eq-smgs} to
\eqref{semdeltaPsi}, with here
$(a_{\delta\Psi} ,b_{\delta\Psi} ) = (u \B_2,\calb)$.  
Using again \eqref{dphipaphip}
we obtain by the same arguments that 
the dynamics for $\xi_{\delta \Psi}:=\delta\Psi-w\B$
reads
\be\label{eq:xi_delta_Psi}
\dot \xi_{\delta \Psi} + \A \xi_{\delta \Psi}  = u \B_2 \xi_{\delta \Psi} + w b^1_{\delta\Psi} ; 
\ee
Since $g_{\delta\Psi} = g_z$, we have by \eqref{hyp-Goh-tr1}
that the amount below belongs to $L^2(0,T;\H)$:
\be
\begin{aligned} 
b^1_{\delta\Psi}  
 = u \B_2 \calb  - g_{\delta\Psi}
= v \B_2 \calb  - \B_2 f - M_1 \Psih - \A \B_1.
\end{aligned}
\ee

\begin{corollary}\label{cor:dPsi}
We have that 
\be\label{est:dPsi1}
\| \xi_{\dPsi}\|_{C(0,T;\H)} =O(\norm{w}{2}),
\ee
\be
\label{est_dPsi}
\| \delta\Psi\|_{L^2(0,T;\H)} =O(\norm{w}{2}).
\ee
\end{corollary}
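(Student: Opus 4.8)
The plan is to apply the general estimate of Lemma~\ref{eq-smgs} directly to the equation \eqref{eq:xi_delta_Psi} governing $\xi_{\delta\Psi}$, exactly as was done for the linearized state $z$ in Proposition~\ref{prop:estimatez}. First I would verify that the data of \eqref{eq:xi_delta_Psi} fit the hypotheses \eqref{geneqs}--\eqref{geneqs_data2} of Lemma~\ref{eq-smgs}: the coefficient $a_{\delta\Psi}=u\B_2$ lies in $L^\infty(0,T;\L(\H))$ since $u\in\calu_{ad}$ is essentially bounded, and $b_{\delta\Psi}=\calb=\B_1+\B_2\Psih\in C(0,T;\H)$. The crucial point is that the forcing term $b^1_{\delta\Psi}=v\B_2\calb-\B_2 f-M_1\Psih-\A\B_1$ belongs to $L^2(0,T;\H)$; this is precisely what hypothesis \eqref{hyp-Goh-tr1} guarantees (the terms $\B_2 f$, $M_1\Psih$ and $\A\B_1$ via points (i), (iii)--(v), and $v\B_2\calb$ since $v\in L^1\subset$ the relevant space with $\calb$ continuous). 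Applying \eqref{estytx} then yields $\|\xi_{\delta\Psi}\|_{C([0,T];\H)}\le c_a\|b^1_{\delta\Psi}\|_2\|w\|_2=O(\|w\|_2)$, which is \eqref{est:dPsi1}.

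For the second estimate \eqref{est_dPsi}, I would use the relation $\delta\Psi=\xi_{\delta\Psi}+w\B=\xi_{\delta\Psi}+w\calb$, together with the bound \eqref{esty} of Lemma~\ref{eq-smgs}, which directly gives
\be
\|\delta\Psi\|_{L^2(0,T;\H)}\le\bigl(T^{1/2}c_a\|b^1_{\delta\Psi}\|_2+\|\calb\|_\infty\bigr)\|w\|_2=O(\|w\|_2).
\ee
Alternatively one can simply combine \eqref{est:dPsi1} with the obvious bound $\|w\calb\|_{L^2(0,T;\H)}\le\|\calb\|_\infty\|w\|_2$ and the triangle inequality.

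The one subtlety worth flagging is the uniformity of the constants: the coefficient $a_{\delta\Psi}=u\B_2$ depends on the \emph{perturbed} control $u$ rather than the reference $\uh$, so I must check that $\|a_{\delta\Psi}\|_\infty$ stays bounded as $u$ ranges over the admissible set. Since $c$ in Lemma~\ref{eq-smgs} is nondecreasing and $\|u\B_2\|_\infty\le u_{\max}\|\B_2\|_{\L(\H)}$ with $u_{\max}:=\max(|u_m|,|u_M|)$ uniform over $\calu_{ad}$, the constant $c_a$ is uniformly bounded, so the $O(\|w\|_2)$ bounds are genuinely uniform. I expect this bookkeeping of the constant to be the only mildly delicate step; the rest is a direct transcription of the argument already given for $z$, the sole difference being that $b^1_{\delta\Psi}$ carries the extra term $v\B_2\calb$ (absent in $b^1_z$) whose $L^2$-integrability is handled by \eqref{hyp-Goh-tr1}(v) and the boundedness of $v$.
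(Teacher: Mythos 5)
Your proof is correct and takes essentially the same approach as the paper: the paper's own proof is the one-line ``Consequence of Lemma \ref{eq-smgs} with here $b^0=\B$,'' and you have simply spelled out that application of \eqref{estytx}--\eqref{esty} to \eqref{eq:xi_delta_Psi}, including the (worthwhile) check that the constants are uniform over $\calu_{ad}$. One cosmetic slip: the phrase ``$v\in L^1\subset$ the relevant space'' is backwards --- what you actually need, and correctly invoke at the end, is that $v=u-\uh$ is essentially bounded (both controls lie in $\calu_{ad}$), so that $\|v\B_2\calb\|_2$ is finite and indeed uniformly bounded.
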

\begin{proof}
Consequence of Lemma \ref{eq-smgs} 
with here $b^0=\B$.
\end{proof}

\subsection{Goh transform for $\eta$}
\label{secgteta}
We next apply Lemma \ref{eq-smgs} to the equation \eqref{sem-eta},
with now 
$(a_\eta,b_\eta) = (\uh \B_2, \B_2 \delta\Psi)$.
We need to apply Corollary \ref{lem:int_by_parts-1.c}
with $B:=\B_2$ and $y:= \delta\Psi$.
Similarly to \eqref{phi-zz}
we obtain that $\Phi:=By$ satisfies
\be
\label{phi-zzeta}
\dot \Phi+ \A \Phi
= g_\eta, \quad
\text{with } g_\eta :=
u \B^2_2 \delta\Psi  + v \B_2\B
+ M_1  \delta\Psi.
\ee
The dynamics for $\xi_\eta:=\eta-w \B_2 \delta\Psi$
reads
\be \label{equ:xi_eta}
\dot\xi_\eta + \A \xi_\eta  = \uh \B_2 \xi_\eta + w b^1_{\eta} ; 
\ee
with 
\be
\label{buneta-exp}
\begin{aligned} 
b^1_{\eta}  
=
a_\eta b_\eta  - g_\eta
= -v \B^2_2 \delta\Psi - v \B_2 \B -
M_1   \delta\Psi.
\end{aligned}
\ee

\begin{lemma}\label{est:eta}
We have that 
\be
\label{est4}
\|\eta\|_{L^\infty(0,T;\H)} =  O\big(\|v\|_2  \|w\|_2 \big).
\ee
\end{lemma}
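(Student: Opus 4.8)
The plan is to bound $\eta$ directly from its own defining equation \eqref{sem-eta}, rather than through the Goh-transformed equation \eqref{equ:xi_eta}, and to feed in the sharp $L^2$-estimate for $\delta\Psi$ that the Goh transform already provides. Recall that $\eta$ is the mild solution of
\be
\dot\eta + \A\eta = \uh\B_2\,\eta + v\,\B_2\delta\Psi,\qquad \eta(0)=0,
\ee
a linear equation whose coefficient $\uh\B_2$ lies in $L^\infty(0,T;\L(\H))$ (since $\uh\in\calu_{ad}\subset L^\infty(0,T)$) and whose source $v\,\B_2\delta\Psi$ lies in $L^1(0,T;\H)$. The same Gronwall argument used in Theorem \ref{Gronwall}, which is exactly the first bound already recorded in \eqref{esteta}, gives
\be
\|\eta\|_{L^\infty(0,T;\H)} = O\big(\|\delta\Psi\,v\|_{L^1(0,T;\H)}\big),
\ee
so the whole matter reduces to re-estimating this $L^1$-norm.

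The key step, and the source of the improvement over the crude bound $O(\|v\|_1^2)$ in \eqref{esteta}, is to refrain from pulling $\delta\Psi$ out in the $L^\infty$-norm and instead split the product by Cauchy--Schwarz,
\be
\|\delta\Psi\,v\|_{L^1(0,T;\H)}
= \int_0^T |v(t)|\,\|\delta\Psi(t)\|_\H \dd t
\le \|v\|_2\,\|\delta\Psi\|_{L^2(0,T;\H)},
\ee
and then to invoke Corollary \ref{cor:dPsi}, which yields $\|\delta\Psi\|_{L^2(0,T;\H)} = O(\|w\|_2)$. Combining the three displays gives $\|\eta\|_{L^\infty(0,T;\H)} = O(\|v\|_2\|w\|_2)$, as claimed. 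Thus the argument is short once Corollary \ref{cor:dPsi} is in hand; the only thing requiring care is the bookkeeping of norms, namely keeping the factor $\|w\|_2$ (rather than $\|v\|_1$) that comes from the Goh estimate for $\delta\Psi$.

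The step I expect to be the genuine pitfall is the tempting but inferior route through the Goh transform itself. Writing $\eta = \xi_\eta + w\,\B_2\delta\Psi$ and estimating $\xi_\eta$ via Lemma \ref{eq-smgs} forces one to control the remainder $\|w\,\B_2\delta\Psi\|_{L^\infty(0,T;\H)}$, a true $L^\infty$-norm of a product that does not factor into the desired $L^2$ quantities; recovering the bound then requires the interpolation inequality $\|w\|_\infty^2 \le 2\|w\|_2\|v\|_2$ (valid because $w(0)=0$ and $\dot w=v$), plus an $E_1$-regularity bound on $\delta\Psi$ to handle the term $M_1\delta\Psi$ inside $b^1_\eta$. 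The direct approach above avoids all of this: by estimating $\eta$ from \eqref{sem-eta} the awkward $L^\infty$-product never appears, and a single Cauchy--Schwarz together with the $L^2$-estimate for $\delta\Psi$ closes the argument.
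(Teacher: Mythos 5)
Your proof is correct and is essentially identical to the paper's: the paper likewise combines the first equality in \eqref{esteta} (the Gronwall bound $\|\eta\|_{L^\infty(0,T;\H)} = O(\|v\,\delta\Psi\|_{L^1(0,T;\H)})$) with Cauchy--Schwarz and the estimate $\|\delta\Psi\|_{L^2(0,T;\H)} = O(\|w\|_2)$ from Corollary \ref{cor:dPsi}. Your closing observation is also consistent with the paper, which sets up the Goh-transformed equation \eqref{equ:xi_eta} for $\eta$ but does not use it in this proof.
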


\begin{proof}
By Corollary \ref{cor:dPsi}, we have that 
\be
\| v \delta\Psi \|_{L^1(0,T;\H)}
\leq
\|v\|_2 \| \delta\Psi\|_{L^2(0,T;\H)} =O(\|v\|_2\norm{w}{2}).
\ee
We conclude with the first equality in \eqref{esteta}.
\end{proof}

\subsection{Main results}\label{sec:soonoc-s}

In this section we state a sufficient optimality condition, 
that needs a new notion of optimality. The
control $\uh$ is  said to be a {\em Pontryagin minimum} 
(see e.g. \cite{MR1641590})
for  problem \eqref{problem:OC} if there exists $\eps>0$ such that $\uh$ is optimal among all the controls $u\in \U_{\rm ad}$ verifying
$
\|u-\uh\|_1<\eps.
$
A bounded sequence $(v_k)\subset L^\infty(0,T)$ is said to
converge to 0 {\em in the Pontryagin sense} 
if $\norm{v_k}{1} \rightarrow 0$.

We need some additional hypotheses: 
\be
\label{hyp-Goh-tr2}
\left\{ 
\ba{lll}
{\rm (i)} & 
\B^2_2 f\in C(0,T;\calh);
\; \; \Psi_d \in C(0,T;\calh), 
\\[1.5ex] {\rm (ii)} & 
M^*_k \ph \in C(0,T;\H^*), \;\; k=1,2.
\ea\right.
\ee 

The following result states a refinement of the Taylor expansion 
stated in Corollary \ref{cor:expansion2}.

\begin{proposition}\label{prop:expansion_w}
Let $\uh\in\calu_{ad}$ 
and let $(v_k)$ converge to $0$ in the Pontryagin sense.
Then 
\be 
J(\uh+v_k)=J(\uh)+\int_0^T \Lambda(t) v_k(t) dt + 
\half \hatQ(\xi[w_k],w_k,w_k(T))+o(\norm{w_k}{2}^2+w_k(T)^2),
\ee
where $(\xi[w_k],w_k)$ is obtained by the Goh transform.
\end{proposition}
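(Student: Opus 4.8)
The statement should be read with $J(\uh+v_k)$ denoting the reduced cost $F(\uh+v_k)=J(\uh+v_k,\Psi[\uh+v_k])$. The plan is to upgrade the \emph{exact} second-order expansion of Proposition~\ref{ExpEasy} into the Goh variables and to control, in the sharper scale $\norm{w_k}{2}^2+w_k(T)^2$, the discrepancy between $\Q(\delta\Psi,v_k)$ and $\Q(z[v_k],v_k)$. Writing $v:=v_k$, $\delta\Psi:=\Psi[\uh+v]-\Psih$, $z:=z[v]$ and $\eta:=\delta\Psi-z$, Proposition~\ref{ExpEasy} gives $F(\uh+v)=F(\uh)+DF(\uh)v+\half\Q(\delta\Psi,v)$, and $DF(\uh)v=\int_0^T\Lambda(t)v(t)\dd t$ by Theorem~\ref{diffcostfct}. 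Since Theorem~\ref{thm:Q_Omega} yields the exact Goh identity $\Q(z,v)=\hatQ(\xi[w],w,w(T))$, the whole statement reduces to the single remainder estimate
\[
\Q(\delta\Psi,v)-\Q(z,v)=o\big(\norm{w}{2}^2+w(T)^2\big).
\]
Throughout I use that a Pontryagin-null sequence is bounded in $L^\infty$ with $\norm{v}{1}\to0$, so $\norm{w}{\infty}\le\norm{v}{1}\to0$ and $\norm{v}{2}^2\le\norm{v}{\infty}\norm{v}{1}\to0$.

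I start from the identity already used in the proof of Corollary~\ref{cor:expansion2},
\[
\Q(\delta\Psi,v)-\Q(z,v)=\int_0^T\Big(Q(\delta\Psi+z,\eta)+2v\la\ph,\B_2\eta\ra\Big)\dd t+Q_T(\delta\Psi(T)+z(T),\eta(T)).
\]
The quadratic contributions are handled directly with the Goh estimates: Corollary~\ref{cor:dPsi} and Proposition~\ref{prop:estimatez} give $\norm{\delta\Psi}{2}+\norm{z}{2}=O(\norm{w}{2})$ and $\norm{\xi_{\dPsi}}{C}+\norm{\xiz}{C}=O(\norm{w}{2})$, while Lemma~\ref{est:eta} gives $\norm{\eta}{L^\infty(0,T;\H)}=O(\norm{v}{2}\norm{w}{2})$. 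Hence $\int_0^TQ(\delta\Psi+z,\eta)\dd t=O(\norm{v}{2}\norm{w}{2}^2)=o(\norm{w}{2}^2)$. Using $\delta\Psi=\xi_{\dPsi}+w\B$ and $z=\xiz+w\B$ one gets the endpoint bound $\norm{\delta\Psi(T)}{}+\norm{z(T)}{}=O(\norm{w}{2}+|w(T)|)$, so the last term is $O\big((\norm{w}{2}+|w(T)|)\norm{v}{2}\norm{w}{2}\big)=o(\norm{w}{2}^2+w(T)^2)$ after a Young inequality on $|w(T)|\norm{w}{2}$; both are of the required order because $\norm{v}{2}\to0$.

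The essential difficulty is the bilinear term $\int_0^T2v\la\ph,\B_2\eta\ra\dd t$: a naive bound only gives $O(\norm{v}{2}^2\norm{w}{2})$, which need \emph{not} be $o(\norm{w}{2}^2)$, since $\norm{w}{2}$ may be arbitrarily small relative to $\norm{v}{2}$ for oscillatory $v$. The remedy is the Goh integration by parts of Corollary~\ref{lem:int_by_parts-1.c}, applied with $\varphi:=w$, $B:=\B_2$, $y:=\eta$, noting that $\eta$ solves \eqref{sem-eta} with $a=\uh\B_2$ and $b=v\B_2\delta\Psi$, compatibly with the costate $\ph$; as $[\B_2,\uh\B_2]=0$ this gives
\[
\int_0^T v\la\ph,\B_2\eta\ra\dd t=w(T)\la\ph(T),\B_2\eta(T)\ra-\int_0^T w\Big(v\la\ph,\B_2^2\delta\Psi\ra+\la M_1^*\ph,\eta\ra-\la Q(\Psih-\Psi_d),\B_2\eta\ra\Big)\dd t.
\]
The endpoint is $O(|w(T)|\norm{v}{2}\norm{w}{2})$, and the last two integrals are $O(\norm{w}{2}\norm{\eta}{2})=O(\norm{v}{2}\norm{w}{2}^2)$, using $M_1^*\ph,\,Q(\Psih-\Psi_d)\in L^\infty$ from \eqref{hyp-Goh-tr1}; all are $o(\norm{w}{2}^2+w(T)^2)$.

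There remains the genuinely cubic term $\int_0^T wv\la\ph,\B_2^2\delta\Psi\ra\dd t$, which is the main obstacle. Splitting once more via $\delta\Psi=\xi_{\dPsi}+w\B$, the $\xi_{\dPsi}$-part is $O(\norm{\xi_{\dPsi}}{C}\norm{w}{2}\norm{v}{2})=O(\norm{v}{2}\norm{w}{2}^2)$, while the remaining piece $\int_0^T w^2v\,\psi\dd t$, with scalar coefficient $\psi:=\la\ph,\B_2^2\B\ra$, is treated by a time integration by parts through $w^2v=\tfrac13(w^3)'$:
\[
\int_0^T w^2 v\,\psi\dd t=\tfrac13 w(T)^3\psi(T)-\tfrac13\int_0^T w^3\dot\psi\,\dd t.
\]
Here $|w(T)|^3\le\norm{w}{\infty}w(T)^2=o(w(T)^2)$ and $\int_0^T|w|^3|\dot\psi|\dd t\le\norm{w}{\infty}\norm{w}{2}^2\norm{\dot\psi}{\infty}=o(\norm{w}{2}^2)$, \emph{provided} $\psi$ is absolutely continuous with $\dot\psi$ integrable. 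This regularity is exactly what the standing hypotheses \eqref{hyp-Goh-tr1}--\eqref{hyp-Goh-tr2} secure: differentiating $\la\ph,\B_2^2\B\ra$ through the equations for $\ph$ and for $\B=\B_1+\B_2\Psih$ (cf. \eqref{dphipaphip}) produces precisely the commutator quantities controlled in \eqref{hyp-Goh-tr1}(iv)--(v) and \eqref{hyp-Goh-tr2}, the same ones underlying the well-definedness of $r$ and $R$ in \eqref{R}; checking that $\dot\psi$ is indeed integrable under these hypotheses is the part requiring the most care. Collecting the four estimates proves the remainder bound, and Theorem~\ref{thm:Q_Omega} then yields the announced expansion.
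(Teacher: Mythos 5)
Your proof is correct and its skeleton coincides with the paper's: the same reduction via Proposition \ref{ExpEasy}, Theorem \ref{diffcostfct} and the Goh identity of Theorem \ref{thm:Q_Omega} to the single remainder estimate $\Q(\delta\Psi,v)-\Q(z,v)=o(\norm{w}{2}^2+w(T)^2)$, the same decomposition of that remainder, the same bounds on the quadratic and endpoint terms, and the same first application of Corollary \ref{lem:int_by_parts-1.c} (with $B=\B_2$, $y=\eta$) to tame the bilinear term. Where you genuinely deviate is the cubic term $\int_0^T w\,v\,\la\ph,\B_2^2\delta\Psi\ra\,\dd t$: the paper applies Corollary \ref{lem:int_by_parts-1.c} a second time (with $B=\B_2^2$, $y=\delta\Psi$, $\varphi=\half w^2$) and then a third time (with $B=\B_2^2$, $y=\B$, $\varphi=\tfrac13 w^3$), whereas you split $\delta\Psi=\xi_{\delta\Psi}+w\B$, dispose of the $\xi_{\delta\Psi}$-part directly with Corollary \ref{cor:dPsi}, and are left only with the scalar integration by parts on $\int_0^T w^2v\,\psi\,\dd t$, $\psi:=\la\ph,\B_2^2\B\ra$; this saves one application of the corollary and is a legitimate streamlining. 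The one point you leave open --- absolute continuity of $\psi$ with $\dot\psi$ essentially bounded, which you flag as ``the part requiring the most care'' --- is exactly what the paper's machinery supplies, so you should cite it rather than rederive it: since $\B$ is a mild solution of $\dot\B+\A\B=\uh\B_2\B+b$ with $b=\A\B_1+M_1\Psih+\B_2 f$ (see \eqref{dphipaphip} and \eqref{equ-bunz}), Corollary \ref{lem:int_by_parts-1.c}(ii) applied to the pair $(\B,\ph)$ with $a=\uh\B_2$, $B=\B_2^2$ and $\varphi=\tfrac13 w^3$ is precisely your displayed identity, and it identifies $\dot\psi=\la\ph,\B_2^2(\A\B_1+M_1\Psih+\B_2 f)\ra+\la M_2^*\ph,\B\ra-\la Q(\Psih-\Psi_d),\B_2^2\B\ra$, each term essentially bounded under \eqref{hyp-Goh-tr1}(iv)--(v) and \eqref{hyp-Goh-tr2} (for the $M_1\Psih$ contribution one uses, as the paper does, the identification $\la\ph,M_1\Psih\ra=\la M_1^*\ph,\Psih\ra$). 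With that citation in place of the deferred regularity check, your argument is complete and marginally shorter than the paper's.
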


\begin{proof} 
First observe that, in view of the definitions of 
$\xi$ and $\xi_{\delta \Psi}$,
and of \eqref{est1} and \eqref{est:dPsi1}, we have that
\be
\label{z-fin-est}
\|z(T)\|_\H \leq 
\|\xi(T)\|_\H 
+ |h| \|\B\|_{L^\infty(0,T;\H)}
= O( \|w\|_2 + |h|),
\ee
and
\be
\label{deltapsifinalest}
\|\delta \Psi(T)\|_\H \leq 
\|\xi_{\delta \Psi}(T)\|_\H 
+ |h| \|\B\|_{L^\infty(0,T;\H)}
= O( \|w\|_2 + |h|).
\ee

We skip indexes $k$. 
Recalling that $u=\uh+v$, $\dPsi$ and $z$ are the solutions of \eqref{semdeltaPsi} and \eqref{semg5lin}, respectively, and $\eta= \dPsi - z,$ there holds the identity
  \be\label{expGoheq1}
  \begin{aligned}
  Q(\dPsi,v) - Q(z,v) &=\int_0^T \la Q(\dPsi(t) + z(t)),\eta(t)\ra\dd t 
  + \la Q_T(\dPsi(T) +  z(T)),\eta(T)\ra  \\
  &\quad + 2\int_0^T v(t) \la\ph(t),\B_2 \eta(t)\ra \dd t.
  \end{aligned}
  \ee
By \eqref{z-fin-est}-\eqref{deltapsifinalest},
Corollary \ref{cor:dPsi} and Lemma \ref{est:eta}, 
 the first and second terms of the r.h.s. are of order
  $o(\norm{w}{2}^2+h^2).$
  Recall now \eqref{sem-eta}, and set
\be
y:= \eta, \quad  a:=\uh \B_2,\quad b:= v\B_2\delta\Psi,\quad B:=\B_2.
\ee
Using Corollary \ref{lem:int_by_parts-1.c}
(in fact, several times in the proof), 
the last integral  in \eqref{expGoheq1} can be rewritten as
  \be
\label{eq-kdil}
  \begin{aligned}
    & \int_0^T v(t) \la \ph(t),\B_2 \eta(t) \ra \dd t = [w \la \ph,\B_2 \eta \ra
   ]_0^T  +  \int_0^T  w(t) 
 \la Q(\Psih(t) -\Psi_d(t)) , \B_2 \eta \ra  \dd t \\
 & - \int_0^T  w(t) 
 v(t) \la\ph(t), \B_2^2 \dPsi(t)\ra  \dd t 
- \int_0^T w(t) \la M^*_1 \ph(t), \eta(t)\ra \dd t. 
\end{aligned}
\ee
By arguments already used, 
all terms of the r.h.s. of \eqref{eq-kdil}
are of order $o(\norm{w}{2}^2+h^2)$, 
except maybe for the third term. 
Recall the equation \eqref{semdeltaPsi} for $\delta\Psi$ and define
\be
a:=\uh \B_2 ,\quad b:=  v\B_2\delta\Psi + v\B,\quad B:=\B^2_2,
\ee 
and we have:
\be
\begin{aligned}
 \int_0^T   &w(t) 
 v(t) \la \ph(t),\B^2_2 \dPsi(t)\ra  \dd t   \\
 =& \,\half [w^2 \la \ph,\B^2_2 \dPsi \ra
   ]_0^T    +\half \int_0^T  w(t)^2 
 \la Q  (\Psih(t) - \Psi_d(t)), \B^2_2 \dPsi(t)\ra \dd t 
\\ & 
 -\half \int_0^T  w(t)^2  \Big\la\ph(t), v(t) \B_2^2
 \big(\B_2\delta\Psi+\B(t)\big) \Big\ra \dd t
\\ & 
 -\half \int_0^T  w(t)^2  \Big\la M_2^* \ph(t),  \dPsi(t)\Big\ra \dd t
\end{aligned}
\ee
Here, again, by the same arguments,
using that $v$ is uniformly essentially bounded and \eqref{est_dPsi}
we find that all terms are of order $o(\norm{w}{2}^2+h^2)$, 
except maybe for the integral $\int_0^T  w(t)^2 
 v(t)\la \ph(t),\B_2^2\B(t))\ra  \dd t, $ which can be 
integrated using Corollary \ref{lem:int_by_parts-1.c} for
 \be
y := \B, \quad  a:= \uh \B_2 ,\quad b:= \A\B_1+M_1 \hat\Psi+\B_2f,\quad B:= \B_2^2.
 \ee
Hence we get
\be \label{wvpB}
\begin{aligned} 
 \int_0^T  w(t)^2 
 v(t)&\la \ph(t),\B_2^2\B(t))\ra  \dd t  
  =
 \frac{1}{3} [w^3 \la \ph,\B_2^2 \B \ra]_0^T    \\
 &+\frac{1}{3} \int_0^T  w(t)^3 
\la Q(\Psi(t) - \Psi_d(t)), \B_2^2 \B \ra \dd t  
\\ & 
 - \frac{1}{3} \int_0^Tw(t)^3\la\ph(t), 
\B^2_2 ( \A \B_1 + M_1 \Psih + \B_2 f)  \ra \dd t
\\ & -
\frac{1}{3} \int_0^Tw(t)^3\la M^*_2\ph(t), \B \ra \dd t,
 \end{aligned}
\ee
The first term in the right-hand side of \eqref{wvpB} is of order
$o(h^2),$ 
while the other three have the form $\int_0^T w^3(t) q(t) \dd t$ for 
$q\in  L^\infty(0,T)$. Note in particular that 
\be
\la \ph(t),M_1 \Psih(t) \ra_\H
=
\la M_1^* \ph(t), \Psih(t) \ra_{E_1}
=
\la M_1^* \ph(t), \Psih(t) \ra_{\H}
\ee 
combined with \eqref{hyp-Goh-tr2}(ii)
implies that the above product 
is essentially bounded.
Then the following estimate holds
\be
\left| \int_0^T w(t)^3 q(t) \dd t \right|
\leq 
\|w\|_\infty \|w\|^2_2 \|q\|_\infty = o( \|w\|^2_2),
\ee
we get
\be
Q(\dPsi,v) - Q(z,v) = o(\norm{w}{2}^2+h^2).
\ee
Finally, with Proposition \ref{ExpEasy} and Theorem \ref{diffcostfct} the result follows.
\end{proof}

Remember that $\Lambda$ was defined in \eqref{expr-lambda}.
In the following we assume that the following hypotheses hold: 
\begin{enumerate}
\item {\em finite structure:}
\be\label{finite_structures}
\left\{
\begin{array}{l}
\text{there are finitely many boundary and singular maximal arcs} \\ \text{and the closure of their union is $[0,T],$}
\end{array}
\right.
\ee
\item {\em strict complementarity} for the control constraint
(note that $\Lambda$ is a continuous function of time)
\be
\label{strict_complementarity}
\left\{ \begin{array}{l}
\text{$\Lambda$ has nonzero values over the interior of each boundary
  arc, and}
\\
\text{at time 0 (resp. $T$) if an initial (resp. final) boundary
  arc exists,}
\end{array}\right.
\ee
\item letting $\T_{BB}$ denote the {\em set of bang-bang junctions,} we assume
\be
\label{RposTBB}
R(t) >0,\quad t\in \T_{BB}.
\ee
\end{enumerate}

\begin{proposition}
Let $\uh \in \calu_{ad}$ satisfy 
\eqref{finite_structures}--\eqref{strict_complementarity}.
Then $PC_2(\uh)$, that was defined before \eqref{def-pc2},
satisfies
\be
PC_2(\uh)=
\left\{
\begin{array}{l}
(w,h)\in L^2(0,T)\times \cR;\text{ $w$ is constant over boundary arcs,}\\ \text{ $w=0$ over an initial boundary arc }\\
\text{and $w= h$ over a terminal boundary arc}
\end{array}
\right\}.
\ee
\end{proposition}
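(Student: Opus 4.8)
The plan is to prove the set equality by establishing both inclusions, characterizing $PC_2(\uh)$ explicitly. Recall that $PC(\uh)$ consists of pairs $(w,h)$ where $w\in W^{1,\infty}(0,T)$ satisfies $\dot w = v\in C(\uh)$, $w(0)=0$, and $w(T)=h$; the set $PC_2(\uh)$ is its closure in the $L^2\times\RR$ topology. The key observation is that the structural description of $C(\uh)$ under the finite-structure and strict-complementarity hypotheses forces $v=\dot w$ to vanish on boundary arcs, which constrains $w$ to be constant there.

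First I would analyze the condition $v\in C(\uh)$ under the standing hypotheses. By definition of the critical cone, $\Lambda(t)v(t)=0$ a.e., together with sign conditions on the contact sets $I_m(\uh)$ and $I_M(\uh)$. By strict complementarity \eqref{strict_complementarity}, $\Lambda$ is nonzero on the interior of each boundary arc (and at the endpoints $0$ or $T$ when an initial or terminal boundary arc exists), so the constraint $\Lambda v=0$ forces $v=0$ a.e. on the interior of every boundary arc. Consequently, for $(w,h)\in PC(\uh)$ the primitive $w(t)=\int_0^t v(s)\,ds$ is constant on each boundary arc. If there is an initial boundary arc $(0,d)$, then $w=0$ there because $w(0)=0$ and $w$ is constant on the arc; if there is a terminal boundary arc $(c,T)$, then $w\equiv w(T)=h$ on it. This establishes that every element of $PC(\uh)$ lies in the candidate right-hand side; since the right-hand side is described by a finite collection of closed linear constraints (constancy on finitely many arcs, using \eqref{finite_structures}, plus the boundary values), it is closed in $L^2\times\RR$, so taking closures yields the inclusion $PC_2(\uh)\subset\{\cdots\}$.

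For the reverse inclusion I would argue by density: given $(w,h)$ in the right-hand side, I must approximate it in $L^2\times\RR$ by elements of $PC(\uh)$. Here $w$ is a general $L^2$ function that is constant on boundary arcs, vanishes on an initial boundary arc, and equals $h$ on a terminal one; there is no constraint on $w$ over singular arcs beyond being $L^2$. The plan is to approximate $w$ by smoother functions $w_n$ that are $W^{1,\infty}$, respect the same constancy and boundary conditions exactly, and satisfy $\dot w_n\in C(\uh)$. On singular arcs $\Lambda\equiv 0$ (since interior singular-arc points are neither contact points nor points where $\Lambda\ne0$), so the critical-cone conditions there reduce to no sign constraint, meaning $\dot w_n$ may be arbitrary on singular arcs; thus one can mollify $w$ freely over the interior of each singular arc while keeping $w_n$ locked to its constant values on boundary arcs. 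One truncates to ensure $\dot w_n\in L^\infty$, and arranges $w_n(0)=0$, $w_n(T)=h_n\to h$.

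The main obstacle will be the density argument at the junctions between arcs: one must construct the approximating $w_n$ so that it transitions between the prescribed constant values on adjacent boundary arcs through the intervening singular arcs, while keeping $\dot w_n$ supported where the critical-cone sign conditions are automatically satisfied (i.e.\ on singular arcs, where $\Lambda=0$). Since by \eqref{finite_structures} the arcs are finite in number and their closures cover $[0,T]$, each boundary arc is adjacent to singular arcs (or to the endpoints), giving enough room to interpolate. Care is needed to verify that the mollified derivatives genuinely lie in $C(\uh)$ — in particular that no sign condition is violated near bang-bang junctions in $\T_{BB}$ — but because $\dot w_n$ is arranged to vanish on boundary arcs and be concentrated on singular-arc interiors where $\Lambda=0$, the conditions defining $C(\uh)$ hold. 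Assembling these piecewise approximations and passing to the limit establishes $\{\cdots\}\subset PC_2(\uh)$, completing the proof.
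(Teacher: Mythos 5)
Your overall plan (forward inclusion from strict complementarity plus closedness of the right-hand side; reverse inclusion by density, concentrating $\dot w_n$ on singular arcs) is the natural one, and it is presumably close to what the paper intends — the paper gives no argument at all, only the citation of \cite[Lemma 8.1]{ABDL12} — and your forward inclusion is correct. The genuine gap is in the reverse inclusion, at the step where you claim that ``each boundary arc is adjacent to singular arcs (or to the endpoints), giving enough room to interpolate.'' This is false under the standing hypotheses: \eqref{finite_structures} and \eqref{strict_complementarity} do not prevent two boundary arcs from being adjacent, i.e.\ bang-bang junctions, and the paper explicitly allows these (the set $\T_{BB}$ appearing in \eqref{RposTBB}). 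At such a junction $\tau$, with adjacent arcs $(c,\tau)$ and $(\tau,d)$, every $w$ with $(w,h)\in PC(\uh)$ is continuous and, by strict complementarity, constant on each of the two arcs, hence equal to a single constant on all of $(c,d)$; since ``$w$ is a.e.\ equal to a constant on $(c,d)$'' is a closed linear condition in $L^2(0,T)$, the same holds for every element of $PC_2(\uh)$. Consequently an element of the stated right-hand side taking two distinct constants on $(c,\tau)$ and $(\tau,d)$ lies outside $PC_2(\uh)$, and no interpolation can fix this — there is no singular arc to transition through. So your density argument breaks down exactly there; the characterization (and any proof of it) is only correct if read with $w$ constant across each maximal chain of consecutive boundary arcs, or if bang-bang junctions are excluded.

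A second, more repairable gap: your justification that $\Lambda\equiv 0$ on singular arcs is circular as written (``\dots nor points where $\Lambda\ne 0$'' asserts exactly what needs proof). The correct source is the first-order condition of Proposition \ref{Prop1order}: up to null sets, $\{\Lambda>0\}\subset I_m(\uh)$ and $\{\Lambda<0\}\subset I_M(\uh)$, while a singular arc meets $I_m(\uh)\cup I_M(\uh)$ in a null set, so $\Lambda=0$ a.e.\ and hence, by continuity, everywhere on singular arcs. But Proposition \ref{Prop1order} requires $\uh$ to be a weak minimum, which is not among the hypotheses \eqref{finite_structures}--\eqref{strict_complementarity}. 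Without some such stationarity assumption the reverse inclusion genuinely fails: if $\Lambda\neq 0$ on an open subinterval of a singular arc, every element of $PC_2(\uh)$ is forced to be constant there. You should therefore invoke weak minimality explicitly (it is available in the only place the proposition is used, Theorem \ref{thm-sufcond}) or record it as an additional hypothesis. Apart from these two points, your approximation scheme on singular arcs — Lipschitz interpolation matching the prescribed constants at arc junctions, with $w_n(0)=0$ and $w_n(T)=h_n\to h$ — is sound.
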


\begin{proof}  
Similar to the one of \cite[Lemma 8.1]{ABDL12}. 
\end{proof}

Consider the following positivity condition:
there exists $\alpha>0$ such that
\be
\label{sufcondso}
\hatQ(\xi[w],w,h) \geq \alpha ( \|w\|^2_2 + h^2),
\quad\text{for all $(w,h) \in PC_2(\uh)$.}
\ee
We say that $\uh$ satisfies a {\em weak quadratic growth condition}
if there exists $\beta>0$ such that for any  $u \in \U_{ad},$
setting $v:=u-\uh$ and $w(t):=\int_0^Tv(s)\dd s,$ we have
\be
\label{sufcondqg}
F(u) \geq F(\uh) + \beta ( \|w\|^2_2 + w(T)^2),
\quad\text{if $\|v\|_1$ is small enough.}
\ee
The word `weak' makes reference to the fact that the growth is
obtained for the $L^2$ norm of $w$, and not the one of $v$.

\begin{theorem}
\label{thm-sufcond}
Let $\uh$ be a weak minimum for
problem \eqref{problem:OC},
satisfying 
\eqref{finite_structures}-\eqref{RposTBB}. 
Then \eqref{sufcondso} holds iff  the quadratic growth condition
\eqref{sufcondqg}  is satisfied.
\end{theorem}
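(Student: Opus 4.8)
The plan is to establish the two implications separately, the implication \eqref{sufcondqg}$\Rightarrow$\eqref{sufcondso} being the more direct one. For it, fix $(w,h)\in PC_2(\uh)$ and, using the density of $PC(\uh)$ in $PC_2(\uh)$, approximate it by pairs $(w_n,h_n)\in PC(\uh)$ with $\dot w_n=v_n\in C(\uh)$. Since $v_n$ is critical, strict complementarity \eqref{strict_complementarity} forces $v_n$ to vanish on the boundary arcs, so that $\uh+\sigma v_n\in\calu_{ad}$ for $\sigma>0$ small by the finite structure \eqref{finite_structures}; moreover $\int_0^T\Lambda v_n\,\dd t=0$. Applying the growth condition \eqref{sufcondqg} to $u=\uh+\sigma v_n$ (whose Goh primitive is $\sigma w_n$) together with the refined expansion of Proposition \ref{prop:expansion_w}, the first-order term drops out and the homogeneity of $\hatQ$ yields, after dividing by $\sigma^2$ and letting $\sigma\downarrow0$, that $\hatQ(\xi[w_n],w_n,h_n)\ge 2\beta(\|w_n\|_2^2+h_n^2)$. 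Passing to the limit in $n$ by continuity of $\hatQ$ over $W$ (Lemma \ref{lem3}) gives \eqref{sufcondso} with $\alpha=2\beta$.

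For the converse \eqref{sufcondso}$\Rightarrow$\eqref{sufcondqg} I would argue by contradiction. If the growth fails, there is a sequence $u_k=\uh+v_k\in\calu_{ad}$ with $\|v_k\|_1\to0$, $\rho_k^2:=\|w_k\|_2^2+w_k(T)^2>0$ and $F(u_k)-F(\uh)<\tfrac1k\rho_k^2$, where $w_k(t):=\int_0^t v_k(s)\,\dd s$. Proposition \ref{prop:expansion_w} gives $F(u_k)-F(\uh)=\int_0^T\Lambda v_k\,\dd t+\tfrac12\hatQ(\xi[w_k],w_k,w_k(T))+o(\rho_k^2)$, while the first-order condition of Proposition \ref{Prop1order} makes $\int_0^T\Lambda v_k\,\dd t\ge0$. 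Normalising $\bar w_k:=w_k/\rho_k$, $\bar h_k:=w_k(T)/\rho_k$ (so $\|\bar w_k\|_2^2+\bar h_k^2=1$) and using the quadratic homogeneity of $\hatQ$ and the linearity of $w\mapsto\xi[w]$, division by $\rho_k^2$ yields $\limsup_k\hatQ(\xi[\bar w_k],\bar w_k,\bar h_k)\le0$ and $\int_0^T\Lambda v_k\,\dd t=O(\rho_k^2)$.

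From here the structural hypotheses enter. By strict complementarity $|\Lambda|\ge c>0$ on the boundary arcs, while $\Lambda v_k\ge0$ pointwise and $\Lambda\equiv0$ on singular arcs; hence $\int_{\text{bdry}}|v_k|\le c^{-1}\int_0^T\Lambda v_k\,\dd t=O(\rho_k^2)$, so the oscillation of $\bar w_k$ on each boundary arc is $O(\rho_k)\to0$ and any weak-$L^2$ limit $\bar w$ of $\bar w_k$ is constant on boundary arcs with the correct endpoint values, i.e.\ $(\bar w,\bar h)\in PC_2(\uh)$. By the compactness hypothesis \eqref{comp-hyp-xi}, $\xi[\bar w_k]\to\xi[\bar w]$ strongly, so the terms of $\hatQ$ containing $\xi$ pass to the limit; since $R\ge0$ on singular arcs (Corollary \ref{cor2}) and $\bar w_k$ converges strongly on boundary arcs, $\hatQ_b$ is weakly lower semicontinuous, whence $\hatQ(\xi[\bar w],\bar w,\bar h)\le\liminf_k\hatQ(\xi[\bar w_k],\bar w_k,\bar h_k)\le0$. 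Coercivity \eqref{sufcondso} then forces $\bar w=0$ and $\bar h=0$.

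The main obstacle is precisely this degenerate outcome: $\bar w_k\rightharpoonup0$ while $\|\bar w_k\|_2\to1$, i.e.\ a loss of mass through oscillation of $v_k$ on the singular arcs, where the first-order term gives no control and $\|v_k\|_1/\rho_k$ may blow up. In this regime $\xi[\bar w_k]\to0$, so $\hatQ$ collapses to $\hatQ_b(\bar w_k)=\int_0^T R\,\bar w_k^2\,\dd t$, and I must show this stays bounded away from $0$. The argument I would use is that coercivity \eqref{sufcondso}, tested against highly oscillatory $w$ supported where $R$ is small, already forces a pointwise Legendre--Clebsch lower bound $R\ge c'>0$ on the singular arcs; combined with the junction hypothesis \eqref{RposTBB} this makes $R$ uniformly positive on the complement of the open boundary arcs, exactly where the mass of $\bar w_k$ concentrates, so that $\liminf_k\hatQ_b(\bar w_k)>0$, contradicting $\limsup_k\hatQ\le0$. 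Establishing this uniform positivity of $R$ and the precise localisation of the escaping mass (in particular ruling out concentration at the bang--bang junctions, which is where \eqref{RposTBB} is indispensable) is the delicate technical heart of the proof.
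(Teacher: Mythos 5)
Your overall architecture (reduction to a second-order necessary condition for one implication; contradiction, normalization and weak limits for the other) is the same as the paper's, but your first implication contains a step that is false as stated. You claim that for $v_n\in C(\uh)\cap L^\infty(0,T)$ vanishing on the boundary arcs one has $\uh+\sigma v_n\in\calu_{ad}$ for $\sigma>0$ small. This fails: by the definition of a singular arc, $\uh$ is only guaranteed to stay in $[u_m+\eps,u_M-\eps]$ on compact subsets of the open arc, so near a junction between a singular arc and, say, a lower boundary arc, $\uh(t)$ may approach $u_m$; since $\Lambda\equiv 0$ on singular arcs, criticality puts no sign or support restriction on $v_n$ there, and if $v_n<0$ on a set of positive measure accumulating at the junction, then $\uh+\sigma v_n<u_m$ on a set of positive measure for \emph{every} $\sigma>0$. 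This is precisely the difficulty that the truncation $v_\eps$ in the proof of Theorem \ref{thm:nec_sec_ord_cond} is designed to circumvent, and your argument needs it (truncate $v_n$ near the junctions, then pass to the limit). Alternatively, the whole computation can be bypassed as the paper does: \eqref{sufcondqg} says exactly that $\uh$ is a weak minimum of $u\mapsto J(u,\Psi[u])-\beta\left(\|w\|_2^2+w(T)^2\right)$, and applying the already established second-order necessary condition (Lemma \ref{lem3}) to that modified problem yields \eqref{sufcondso} with $\alpha=2\beta$, which is the constant your computation also produces.

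For the converse, your scheme coincides with the paper's argument (Steps 1--2 of its proof), and the part you defer as ``the delicate technical heart'' is indeed the crux, but it does go through by exactly the mechanism you guess, so nothing new is needed beyond what the paper already provides. Coercivity \eqref{sufcondso} means that $\hatQ-\alpha\left(\|\cdot\|_2^2+h^2\right)$ is nonnegative over $PC_2(\uh)$, so the argument of Corollary \ref{cor2} applied to this shifted form upgrades $R\ge 0$ to $R\ge\alpha$ on singular arcs; then \eqref{RposTBB}, the finite structure, and the continuity of $R$ (a consequence of \eqref{hyp-Goh-tr2}) give $R\ge\half\alpha$ on an $\eps$-neighborhood $I^{\eps}_{SBB}$ of the union of the singular set and the bang--bang junctions. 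Note that this concentration set necessarily includes small pieces of the boundary arcs adjacent to junctions, where your oscillation estimate gives no uniform convergence; this is why the continuity of $R$ is indispensable, and why your phrase ``complement of the open boundary arcs'' is not quite the right set. Since $\bar w_k\to 0$ uniformly off $I^{\eps}_{SBB}$ and $\bar h_k\to 0$, the unit mass concentrates on $I^{\eps}_{SBB}$, so $\liminf_k\int_0^T R\,\bar w_k^2\,\dd t\ge\half\alpha>0$, while the $\xi$- and endpoint terms vanish by the compactness hypothesis \eqref{comp-hyp-xi}; this contradicts $\limsup_k\hatQ\le 0$, as you intend. (The paper packages the same facts as a Legendre-form argument on $L^2(I^{\eps}_{SBB})$, concluding strong convergence of $\bar w_k$ to $0$ and contradicting the unit norm; the two formulations are equivalent.) In summary: the second implication is a correct but incomplete sketch, and the first implication has a genuine, though repairable, gap at the admissibility step.
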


\begin{proof}
Let \eqref{sufcondso} hold and let $(v_k,w_k)$ contradict the weak quadratic growth condition \eqref{sufcondqg},
 i.e. 
 \begin{align}
 \uh + v_k \in \U_{ad}, \quad v_k \neq 0, \quad \norm{v_k}{L^1(0,T)} \rightarrow 0,\quad w_k(t)=\int_0^t
 v_k(s)ds,
 \end{align}
 with
  \begin{align}\label{quadratic_growth_condition}
    J(\uh + v_k) \le J(\uh) + o(\gamma_k)
\end{align}
for $\gamma_k := \gamma(w_k,w_{k,T})$ where $\gamma(w,h):= \norm{w}{2}^2+ h^2,$ for any $(w,h) \in L^2(0,T) \times \cR.$
Set $h_k:=w_{k,T},$ and 
$(\wh_k,\hh_k):=(w_k,h_k)/\sqrt{\gamma_k}$
that has unit norm in
$L^2(0,T)\times \RR$. 
Extracting if necessary a subsequence, 
we have that there exists 
$(\wh,\hh)$ in $L^2(0,T) \times \cR,$ 
such that $\wh_k$ converges weakly in $L^2(0,T)$ to  $\wh$ 
and $\hh_k \rightarrow \hh.$
 Let $\hat\xi_k$ and $\hat\xi$  denote the solution of
 \eqref{diffeqxi} associated 
with $\wh_k$ and $\wh,$ respectively. 
Since $w\mapsto \xi[w]$ is linear and continuous 
$L^2(0,T) \rar L^\infty(0,T;\H)$,
$\hat \xi_k$ weakly converges to $\hat \xi$ in $L^\infty(0,T;\H).$
By the compactness hypothesis 
\eqref{compachyp}
we also have that 
$\hat \xi_k\rar\hat \xi$ in $L^2(0,T;\H).$

We proceed in three steps, starting by proving the sufficiency of 
\eqref{sufcondso}.
We obtain in {\em Step 1} that
$(\wh,\hh) \in PC_2(\uh)$, and in {\em Step 2}
that $(\wh_k,\hh_k)\rar 0$ strongly in $L^1(0,T)\times \cR,$ which contradicts the
    fact that $(\wh_k,\hh_k)$ has unit norm. 
Finally in  {\em Step 3} we prove the necessity of \eqref{sufcondso}.
   
{\em Step 1.} From Proposition \ref{prop:expansion_w} we have
    \begin{align}\label{exp1}
J(\uh+v)=J(\uh)+\int_0^T \Lambda(t) v(t) dt + O(\gamma_k).
    \end{align}
    Note that the integrand on the right hand-side of the previous equation is nonnegative in view of the first order conditions given Proposition \ref{Prop1order}.
  Using     \eqref{quadratic_growth_condition}, it follows that
    \be
\label{gamkrar0}
 \lim_{k\rightarrow \infty} \frac 1 {\sqrt \gamma_k}\int_0^T \Lambda(t) v_k(t) \dd t = 0.
    \ee
Consider now a maximal boundary arc $[c,d]$ and let $\eps>0$ be sufficiently small such that $c+\eps < d-\eps.$
In view of hypotheses \eqref{strict_complementarity}, $\Lambda$ is uniformly positive (respectively, uniformly negative)
on $[c+\eps,d-\eps]$, and therefore, from \eqref{gamkrar0} we get
\be
\label{deltawk}
0 = \lim_{k\rightarrow \infty} 
\frac 1 {\sqrt \gamma_k}\int_{c+\eps}^{d-\eps} v_k(t) \dd t = \lim_{k\rightarrow \infty}
\wh_k(d-\eps) - \wh_k(c+\eps).
\ee
Since $\wh_k$ is monotonous on $[c,d]$ and $\eps>0$ is arbitrarily
small, 
it follows that, extracting if necessary a subsequence,
we can assume that 
$\wh_k$  converges uniformly on $[c+\eps,d-\eps]$ to a constant
function. By a diagonal argument we may assume that  
$\wh$ is constant on every of (the finitely many) 
boundary arcs $[c,d]$.

For an initial (resp. final) boundary arc, 
in view of the strict complementarity hypothesis
\eqref{strict_complementarity}
we have a similar argument using integrals
between 0 and $d-\eps$
(resp. between $c+\eps$ and $T$). 
Since $\wh_k(0) =0$ (resp. $\wh_k(T)= \hh_k$), 
we deduce that, on this arc, $\wh$ equals 0
(resp. $h$). Hence, we showed that $(\wh,\hh) \in PC_2$ as desired.

{\em Step 2.}  From \eqref{sufcondqg}, Proposition
\ref{prop:expansion_w}, 
the non-negativity of $\int_0^T
\Lambda(t)v_k(t)dt $, 
\eqref{quadratic_growth_condition},
and the convergence of
$\xi_k$ to $\hat \xi$ in $L^2(0,T;\calh)$ 
we deduce that 
  \begin{align}\label{cond_o}
   \hatQ(\xih_k, \wh_k,\hw_{T,k}) \le o(1).
  \end{align}
 
  Let us consider the set $I_S := [0, T ] \setminus (I_m \cup I_M )$ the closure of the union of singular arcs, and recall the definition of
  $\T_{BB}$
in \eqref{strict_complementarity}.
 We set for $\eps > 0$
  \be
  I^{\eps}_{SBB}:= \{t \in [0, T ];
  \,\, \dist(t, I_{S} \cup \T_{BB}) \le  \eps \},\quad
  I^\eps_0 := [0, T ] \setminus I^\eps_{SBB}.
  \ee
  Recalling that $w_k$ converges uniformly on $[c+\eps,d-\eps]$ for any bang arc $[c,d]$ and $\eps>0$ sufficiently small, we deduce that $w_k$ convergence uniformly on the set $I^{\eps}_0$.

Recall the definition of $R$ in \eqref{R}.
Observe that $R(t)$ is continuous in view of the continuity of
$f(t)$ and $\Psi_d(t)$ in $\H$, and of $\psih(t)$ in $E$. 
By \eqref{sufcondso}, there exists $\alpha >0$, such that the quadratic form  $\hatQ(\xi[w],w,h) - \alpha \gamma(w,h)$
is nonnegative over $PC_2(\uh)$.
So, by hypothesis \eqref{RposTBB} and Corollary \ref{cor2},
we have that 
  \be \label{R_on_SBB}
  R(t)\ge \half \alpha \text{ over } I^{\varepsilon}_{SBB}.
  \ee
  We split the form $\hatQ$ defined in \eqref{Omega} as
$   \hatQ=  \hatQ_{T,a} +  \hatQ_b^1+ \hatQ_b^2,$
  where
  \be
  \begin{aligned}
     \hatQ_{T,a}&:= \hatQ_T + \hatQ_a,\quad
        \hatQ_b^1(w):= \int_{I_{SBB}^{\varepsilon}} R(t) w(t)^2 \dd t,\quad
     \hatQ_b^2(w):= \int_{I^{\varepsilon}_0} R(t) w(t)^2 \dd t.
  \end{aligned}
  \ee
By \eqref{comp-hyp-xi},
$\hatQ_{T,a}(\xi[\cdot],\cdot, h) \colon L^2(0,T) \times \cR
\rightarrow  \cR$
is weakly continuous.
By \eqref{R_on_SBB}, the restriction of 
$ \hatQ_b^1$ to $L^2(I^{\varepsilon}_{SBB} )$ is a
  {\em Legendre form}
(it is weakly l.s.c. and, if $w_k$ weakly converges to $\hat w_k$ and 
 $ \hatQ_b^1( w_k) \rightarrow
    \hatQ_b^1( w_k),$ then $ w_k \rightarrow \hat\omega$ strongly in $L^2(I^{\varepsilon}_{SBB})$).
  Thus we have
\be  \begin{aligned}\label{limOmega}
    \hatQ_{T,a}(\xih,\wh,\hh) &= \lim_{k} \hatQ_{T,a}(\xih_k,\wh_k,\wh_{k,T}),\\
    \hatQ_b^1(\wh) &\le \liminf_{k \rightarrow \infty} \hatQ_b^1(\wh_k),\\
    \hatQ_b^2(\wh)&=\lim_{k\rightarrow \infty}\hatQ_b^2(\wh_k).
  \end{aligned} \ee
The last equality uses the fact that $\wh_k\rar\wh$ uniformly on $I_0^{\eps}$.
From \eqref{sufcondso}, \eqref{limOmega} and  \eqref{cond_o}
and step 1, we get:
\be
\begin{aligned}
\alpha\gamma(\wh,\hh) & \leq \hatQ(\xih,\wh,\hh) \leq \lim_{k\rightarrow \infty} \hatQ_{T,a}(\xih_k,\wh_k,\wh_{k,T})
+ \limsup_{k \rightarrow \infty} \hatQ_b^1(\wh_k) + \lim_{k\rightarrow \infty}\hatQ_b^2(\wh_k) \\
&= \limsup_{k \rightarrow \infty} \hatQ(\xih_k,\wh_k,\hh_k)  \leq 0.
\end{aligned}
\ee
Then, $(\wh,\hh)=0$ and
$\hatQ_b^1(\wh_k) \rightarrow \hatQ_b^1(\wh)=0.$ Since 
$\hatQ_b^1$ is a Legendre form,
$\wh_k \rightarrow \wh=0$ in $L^2(I^{\varepsilon}_{SBB})$. Given that
$\wh_k$ converges uniformly to $\wh$ on $I_0^\eps,$ we get that
$(\wh_k,\hh_k)$ 
strongly converges to $(0,0)$ on $L^2(0,T)\times \cR$.
This leads to a contradiction since $(\wh_k,\hh_k)$  is a unit sequence. Thus, the quadratic growth \eqref{sufcondqg} holds.

{\em Step 3.}  
Conversely, let the weak quadratic growth condition \eqref{sufcondqg} be given for $\beta>0.$ Further let $v\in L^2(0,T)$ and $w[v](s):=\int_0^Tv(s)\dd s$. Applying the second order necessary condition (see Lemma \ref{lem3}) to problem 
\be
\min J(u,\Psi) - \beta \gamh, \quad \gamh:=\int_0^T w[v](s)^2 \dd s + w[v](T)^2
\ee
we obtain condition \eqref{sufcondso}.
\end{proof} 

\section{Applications}\label{sec:applschr}

In this application section, after a general discussion for the case
of diagonalizable operators, where the semigroup properties can be
related to the structure of the spectrum, we consider two important
application fields, the heat and wave equations. It is of interest to
see the great qualitative difference between them, related in
particular
to the fact that for the wave equation, the commutators
involve no differential operators.

\subsection{Diagonalizable operators}
\label{sec:notation-sch-d}
In our applications $\H$ is a separable  Hilbert space with a Hilbert basis $\{e_k;\, k\in\NN\},$ of eigenvectors of 
$\A$, with associated (real) eigenvalues $\mu_k$. 
Let $\Psi\in \H$, with components $\Psi_k:= (\Psi,e_k)_\H,$ 
where $(\cdot,\cdot)_\H$ denotes the scalar product in $\H.$

We have that 
\be
\label{sec:notation-sch-d-h2}
\dom(\cala) =\left\{ \Psi\in \H; \;\; 
\sum_{k\in\NN} |\mu_k|^2 |\Psi_k|^2 < \infty \right\}. 
\ee
Given an initial condition $\Psi_0 = \sum_{k\in\NN} \Psi_{0k} e_k \in \H$,
the semigroup verifies the following expression:
\be
e^{-t\A}\Psi_0 = \sum_{k\in\NN} e^{-t\mu_k} \Psi_{0k} e_k.
\ee
Since $\Psi_0\in \H$ we have that $\sum_{k\in\NN} \|\Psi_{0k} \|_\H^2 < \infty.$
Let us note that the eigenvalues $\mu_k$ have to comply with
condition \eqref{cond:bounded}, 
i.e. 
\be
\label{cond:bounded2-muk}
\sum_{k\in\NN} |e^{-t\mu_k}|^2  | \Psi_k(t) |^2
\leq 
\left( c_{\A} e^{\lambda_{\A} t} \right)^2
\sum_{k\in\NN} | \Psi_k(t) |^2.
\ee
Letting ${\rm Re}$ denote the {\em real part,} we observe that 
$|e^{-t\mu_k}|  = e^{-t \rm Re (\mu_k)}$, 
so that the above condition \eqref{cond:bounded2-muk} is equivalent to
\be
\label{cond:bounded2-muk-bis}
\sum_{k\in\NN} e^{-2t \rm Re (\mu_k)} | \Psi_k(t) |^2
\leq 
\left( c_{\A} e^{\lambda_{\A} t} \right)^2
\sum_{k\in\NN} |\Psi_k(t)|^2.
\ee
Considering the case when $\Psi_0=e_k,$ for some $k\in\NN$, 
we observe that 
\eqref{cond:bounded2-muk-bis}
holds iff the following
{\em bounded deterioration} condition holds:
\be \label{cond:bounded_deter}
\gamma := \inf_k  \mu_k > - \infty.
\ee
Then \eqref{cond:bounded2-muk-bis} holds with 
$c_{\A} =1$ and $\lambda_{\A} =\gamma$, 
and consequently:
\be\label{cond:bounded-ter}
\| e^{-t \A}\|_{\L(\calh)} \leq e^{-\gamma t},\quad t>0.
\ee
Observe that, if $\gamma\geq 0$, then the semigroup results a contraction semigroup.

In this setting we have the regularity results that follow. 
Set, for $q>0,$
\be
\H^q :=\{\Psi\in \H; \;\; \sum_{k\in\NN}  
(1+| \mu_k|^q) |\Psi_k|^2 < \infty\},
\ee
(so that $\H^2 = \dom(\A)$), 
endowed with the norm
\be
\| \Psi \|_{\H^q} := \left(\sum_{k\in\NN} 
(1+ |\mu_k|^q) |\Psi_k|^2 \right)^{1/2}.
\ee
Then $\H^q$ is a Banach space with dense, continuous 
inclusion in $\H$.
Since, for $0<q <p$ and $a>0,$ it holds
$a^q \leq 1 + a^p$,
we have that 
$\H^q \subset \H^p$. 
Furthermore, under the bounded deterioration condition \eqref{cond:bounded_deter},
it holds $e^{-t\A}(\H^q) \subseteq \H^q$ and the restriction of $e^{-t\A}$ to $\H^q$ is itself a semigroup.

\begin{remark} 
By the Hille-Yosida Theorem, $\A$
is the generator of a semigroup iff,
for some $M>0$ and $\om\in \RR$,
for all $\lambda > \om$,
and $n=1,2,\ldots,$
$(\lambda I + \A)$ has a continuous inverse that satisfies
\be \label{Hille_Yosida}
\| (\lambda I + \A) ^{-n} \|_{\L(\H)} \leq M/(\lambda-\om)^{n}.
\ee 
That is, $\lambda+\mu_k\neq 0$ for all $k$, and 
for all $f = \sum_k f_k e_k \in\H$, 
\be
\sum_k |\lambda+\mu_k|^{-2n}|f_k|^2
\leq
{M^2}(\lambda-\om)^{-2n} \sum_k |f_k|^2.
\ee
This holds iff, for all $k$, 
$|\lambda+\mu_k|^{-2n} \leq {M^2}/(\lambda-\om)^{2n}$,
that is, 
\be
\label{equ-cinq-d}
|\lambda+\mu_k| \geq (\lambda-\om)/M^{1/n}.
\ee
Now, consider $M=1$ and  note that \eqref{equ-cinq-d} is equivalent to 
\be
2 \lambda (\om +\mu_k)
\geq 
|\mu_k|^2 +\om^2.
\ee
Dividing by $\lambda$ and taking $\lambda$ to $\infty$, we get
$\om +\mu_k \geq 0$.
As expected, we recover the bounded deterioration condition
\eqref{cond:bounded_deter} with $\om=-\gamma $, and we conclude that, with these choices of $M$ and $\om,$ the Hille-Yosida condition holds.
\end{remark}

In this setting we have some compact inclusions.

\begin{lemma}
\label{lem:comp-incl}
Let $0<q<p$.  Then the  inclusion of 
$\H^p$ into $\H^q$ is compact iff 
$|\mu_k| \rar \infty$. 
  \end{lemma}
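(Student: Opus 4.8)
The plan is to identify both spaces isometrically with $\ell^2(\NN)$ and thereby realize the inclusion as a diagonal multiplication operator, for which compactness is equivalent to the decay of the diagonal entries. This converts the statement into the classical fact that a diagonal operator on $\ell^2$ is compact iff its diagonal tends to zero, and then into an elementary estimate on the $\mu_k$.

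Concretely, I would set $a_k:=(1+|\mu_k|^q)^{1/2}$ and $b_k:=(1+|\mu_k|^p)^{1/2}$, so that $\|\Psi\|_{\H^q}^2=\sum_k a_k^2|\Psi_k|^2$ and $\|\Psi\|_{\H^p}^2=\sum_k b_k^2|\Psi_k|^2$. The maps $U_q\Psi:=(a_k\Psi_k)_k$ and $U_p\Psi:=(b_k\Psi_k)_k$ are isometric isomorphisms from $\H^q$ and $\H^p$, respectively, onto $\ell^2(\NN)$. Writing $\iota$ for the inclusion $\H^p\rar\H^q$, a one-line computation shows that $D:=U_q\circ\iota\circ U_p^{-1}$ is the diagonal operator $(x_k)_k\mapsto(d_kx_k)_k$ with $d_k:=a_k/b_k=\big((1+|\mu_k|^q)/(1+|\mu_k|^p)\big)^{1/2}$; the sequence $(d_k)$ is bounded by the inequality $|\mu_k|^q\le 1+|\mu_k|^p$ already used before the lemma. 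Since $U_p,U_q$ are unitary, $\iota$ is compact if and only if $D$ is compact.

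Next I would use the criterion that a bounded diagonal operator $D$ on $\ell^2(\NN)$ is compact iff $d_k\rar 0$, reproving it in two lines. If $d_k\rar 0$, then $D$ is the operator-norm limit of its finite-rank truncations $D_N$, because $\|D-D_N\|=\sup_{k>N}|d_k|\rar 0$, so $D$ is compact. Conversely, if $d_k\not\rar 0$, there are $\eps>0$ and a subsequence with $|d_{k_j}|\ge\eps$; the orthonormal vectors $e_{k_j}$ then satisfy $\|De_{k_j}-De_{k_i}\|\ge\sqrt2\,\eps$ for $i\ne j$, so $(De_{k_j})$ has no convergent subsequence and $D$ is not compact.

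It remains to match $d_k\rar 0$ with the spectral condition. If $|\mu_k|\rar\infty$, then for large $k$ we have $d_k^2\le|\mu_k|^{-p}+|\mu_k|^{q-p}\rar 0$, since $p>0$ and $q-p<0$. Conversely, if $|\mu_k|\not\rar\infty$, there are $M>0$ and a subsequence with $|\mu_{k_j}|\le M$, whence $d_{k_j}^2\ge(1+M^p)^{-1}>0$ and $d_k\not\rar 0$. Combining the three reductions yields the equivalence. I expect no serious obstacle in this argument; the only points requiring care are the bookkeeping of the isometric identifications and the elementary two-sided estimate linking the decay of $d_k$ to $|\mu_k|\rar\infty$.
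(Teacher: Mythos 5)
Your proof is correct, but it takes a genuinely different route from the paper's. The paper argues directly at the level of sequences: for sufficiency it truncates a bounded sequence $(\Psi^n)\subset\H^p$ at order $N$, uses the ratio bound $(1+|\mu_k|^q)/(1+|\mu_k|^p)\le(1+|\mu_N|^q)/(1+|\mu_N|^p)$ for $k>N$ to control the tails uniformly, extracts limits of the finite-dimensional truncations by a diagonal argument, and concludes that $(\Psi^n)$ is Cauchy in $\H^q$; for necessity it observes that along a bounded subsequence $|\mu_{k_j}|\le M$ the basis vectors $e_{k_j}$ are bounded in $\H^p$ and converge weakly but not strongly in $\H^q$. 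You instead conjugate the inclusion by the isometric identifications $U_p,U_q$ with $\ell^2(\NN)$, reducing the lemma to the classical criterion that a bounded diagonal operator on $\ell^2$ is compact iff its diagonal tends to zero, and then to the elementary equivalence $d_k\rar 0 \iff |\mu_k|\rar\infty$. The two proofs share the same analytic ingredients --- the ratio $(1+|\mu_k|^q)/(1+|\mu_k|^p)$ and the counterexample sequence $(e_{k_j})$ --- but your packaging is more modular: the functional analysis is isolated in a standard, reusable fact, and your operator-norm approximation by finite-rank truncations is arguably tidier than the paper's diagonal-subsequence/Cauchy argument, which as written is rather terse. The only bookkeeping you should make explicit is the surjectivity of $U_q$ (given $(x_k)\in\ell^2$, the vector with components $x_k/a_k$ lies in $\H^q$ because $a_k\ge 1$), since that is what makes $\H^q$ a Hilbert space isometric to $\ell^2$ and justifies transferring compactness across the conjugation; with that noted, your argument is complete.
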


\begin{proof}
  Part 1. Let $|\mu_k| \rar \infty$.  Reordering if necessary, we may
  assume that $|\mu_k|$ is a nondecreasing sequence.  Let $(\Psi^n)$ be
  a bounded sequence in $\H^p$.  Consider the truncation at order $N$,
   say $\varphi^{N,n} := \sum_{k< N} \Psi_k^ne_k \in \H^p.$  
The order $N$ can be taken large enough, so that 
$|\mu_N| > 1$. 
It is easily checked that 
\be
 \frac{1+ |\mu_k|^q }{1+ |\mu_k|^p} \leq
 \frac{1+ |\mu_N|^q }{1+ |\mu_N|^p},
\quad \text{for any $k>N$.} 
\ee
Then 
\be
\label{Psinphin}
  \ba{lll} 
\| \Psi^n - \varphi^{N,n}\|^2_{\H^q} 
&= \sum_{k\geq N} (1+   |\mu_k|^q) |\Psi^n_k|^2 
\\ & \leq \disp 
\frac{1+ |\mu_N|^q }{1+    |\mu_N|^p} 
\sum_{k\geq N}  (1+ |\mu_k|^p) |\Psi^n_k|^2 
\\ & \leq \disp
  \frac{1+ |\mu_N|^q }{1+ |\mu_N|^p} \| \Psi^n\|^2_{\H^p}.
\ea\ee
By a diagonal argument we may assume that
$\{\varphi^{N,n}\}_{n\in\NN}$ has, for every $N$,  a limit say
$w^N$ in $\H^q.$ 
By \eqref{Psinphin}, 
for any $\eps>0$, we can choose $N$ large enough such that 
$\|w^N - \Psi^n \|_{\H^q} \leq \eps.$  It follows that $\Psi^n$ is a 
Cauchy sequence in $\H^q$. 
\\ Part 2.
If there exists a subsequence $(k_j)\subset \cN,$
such that $\mu_{k_j}$
is bounded, then $(e_{k_j})$
is necessarily a bounded sequence in 
$\H^p$ (and therefore in $\H^q$)
that converges to zero weakly, but not strongly,
so that the inclusion  of 
$\H^p$ into $\H^q$ cannot be compact.
\end{proof}

\begin{lemma}
\label{lem:higher_reg}
If, for some $q>0$:
\be
\B_1\in \H^q; \;\; \B_2\in \L(\H^q); \;\; f \in L^1(0,T;\H^q);
\;\; \Psi_0 \in \H^q,
\ee
then the solution of \eqref{semg1} belongs to $C(0,T;\H^q)$. 
  \end{lemma}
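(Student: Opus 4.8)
The plan is to recognize the assertion as a direct instance of Lemma~\ref{lem-reg.l1} applied with $E := \H^q$; the entire task then reduces to checking that $\H^q$ enjoys the restriction property of Definition~\ref{def:restr_prop}.

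First I would recall what is already in place: $\H^q$ is a Banach space with dense and continuous inclusion in $\H$, and the standing hypothesis that $\A$ generates a strongly continuous semigroup forces, through the equivalence with \eqref{cond:bounded}, the bounded deterioration condition $\gamma := \inf_k \mu_k > -\infty$ of \eqref{cond:bounded_deter}. The structural conditions \eqref{lem-reg.l1-1} on the zero- and first-order coefficients are nothing but two of the hypotheses: $\B_1 \in \H^q$ and $\B_2 \in \L(\H^q)$ are assumed outright, so the restriction of $\B_2$ maps $\H^q$ continuously into itself.

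Next I would verify the semigroup part of the restriction property, namely that the restriction of $e^{-t\A}$ to $\H^q$ maps $\H^q$ into itself, is strongly continuous there, and obeys the bound \eqref{cond:bounded2}. Writing $e^{-t\A}\Psi = \sum_k e^{-t\mu_k}\Psi_k e_k$ and using $|e^{-t\mu_k}| = e^{-t\mu_k} \le e^{-\gamma t}$, one computes
\[
\|e^{-t\A}\Psi\|_{\H^q}^2 = \sum_{k\in\NN}(1+|\mu_k|^q)\,e^{-2t\mu_k}\,|\Psi_k|^2 \le e^{-2\gamma t}\,\|\Psi\|_{\H^q}^2,
\]
which simultaneously shows $e^{-t\A}(\H^q)\subseteq \H^q$ and gives \eqref{cond:bounded2} with $c_{\A'}=1$ and $\lambda_{\A'}=-\gamma$; the semigroup law is inherited from $e^{-t\A}$ on $\H$. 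Strong continuity on $\H^q$ follows from dominated convergence: for each $k$ the term $(1+|\mu_k|^q)\,|e^{-t\mu_k}-1|^2\,|\Psi_k|^2$ tends to $0$ as $t\downarrow 0$, and for $t$ in a bounded interval it is dominated by a constant multiple of $(1+|\mu_k|^q)|\Psi_k|^2$, which is summable precisely because $\Psi \in \H^q$.

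Having checked all items of Definition~\ref{def:restr_prop}, $\H^q$ has the restriction property, and with $\Psi_0\in\H^q$ and $f\in L^1(0,T;\H^q)$ the conclusion $\Psi\in C(0,T;\H^q)$ is exactly what Lemma~\ref{lem-reg.l1} delivers. The only step demanding any real care is the strong-continuity verification, and even that is routine via dominated convergence; everything else is a matter of aligning the hypotheses with the restriction-property template.
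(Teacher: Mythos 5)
Your proposal is correct and follows exactly the paper's route: the paper's proof is the one-line "Consequence of Lemma \ref{lem-reg.l1} concerning the restriction property," relying on the preceding observation that under the bounded deterioration condition \eqref{cond:bounded_deter} the restriction of $e^{-t\A}$ to $\H^q$ is itself a semigroup. You simply spell out the verification of that restriction property (invariance, the bound \eqref{cond:bounded2} with $c_{\A'}=1$, $\lambda_{\A'}=-\gamma$, and strong continuity via dominated convergence), all of which is accurate.
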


\begin{proof}
Consequence of Lemma \ref{lem-reg.l1}
concerning the restriction property.
\end{proof}



\subsection{Link with the variational setting for parabolic equations}\label{sec:regsol-p}
The variational setting is as follows. 
Assuming as before $\H$ to be a Hilbert space, 
let $V$ be another Hilbert
space continuously embedded in $\H$, with dense and compact inclusion.
We identify $\H$ with its dual and therefore, by the Gelfand triple
theory, with a dense subspace of $V^*$. Given a continuous
bilinear form $a: V\times V \rar \RR$, we consider the equation
\be
\label{eq-vf1}
\la \dot \Psi(t) , v \ra_V + a( \Psi(t), v) = 
( f(t), v )_\H, \quad \text{for a.a. $t\in (0,T)$}
\ee
with $f\in L^2(0,T;\H)$ and the initial condition
$\Psi(0) = \Psi_0 \in \H$. 
It is assumed that the bilinear form
is semicoercive, that is, 
for some $\alpha>0$ and $\beta\in\RR$:
\be\label{semicoercive}
a(y,y) \geq \alpha \|y\|^2_V - \beta \|y\|^2_\H,
\quad \text{for all $y\in V$}.
\ee
By the Lions-Magenes theory \cite{LioMag68a},
equation \eqref{eq-vf1}
has a unique solution in the space
\be
W(0,T) := \{ u\in L^2(0,T,V); \;\; 
\dot u\in L^2(0,T,V^*) \}.
\ee
It is known that 
$W(0,T) \subset C(0,T;\H)$, so that 
$W(0,T) \subset L^2(0,T;\H)$. 
By Aubin's Lemma \cite{MR0152860},  
\be
\label{Aubin}
\text{the inclusion $W(0,T) \subset L^2(0,T;\H)$
is compact}.
\ee

Let $A_V \in \L(V,V^*)$ be defined by 
\be
\la A_V u,v\ra = a(u,v), \quad
\text{for all $u$, $v$ in $V$.}
\ee
The adjoint $A*_V \in \L(V,V^*)$ satisfies
\be
\la A^*_V u,v\ra = a(v,u), \quad
\text{for all $u$, $v$ in $V$.}
\ee
Since $V\subset \H$ we can consider the 
following unbounded operators
$\cala_\H$  and $\cala^*_\H$ in $\H$, with domain
\be
\dom( \cala_\H ):= \{ v\in V; \quad A_V v \in \H \};
\quad 
\dom( \cala^*_\H ):= \{ v\in V; \quad A^*_V v \in \H \},
\ee
and 
$\cala_\H v := A_V v$ for all $v \in \dom( \cala_\H )$,
$\cala^*_\H v := A^*_V v$ for all $v \in \dom( \cala^*_\H )$.
Then one can check that 
$\cala^*_\H$ is the adjoint of $\cala_\H$.

\begin{lemma}
\label{lem:general_parab_setting}
In the above setting, 
$\cala_\H$ is the generator of a semigroup, 
and when $f\in L^2(0,T;\H)$
the variational solution coincides with the mild solution. 
\end{lemma}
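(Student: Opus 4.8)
The plan is to prove the two assertions separately. First I would show that $-\cala_\H$ is the infinitesimal generator of a $C_0$-semigroup $e^{-t\cala_\H}$ (which is what ``$\cala_\H$ generates a semigroup'' means in the convention of \eqref{semg1}), obtaining along the way the growth bound \eqref{cond:bounded}; then I would verify that the variational solution of \eqref{eq-vf1} is a weak solution in the sense of Definition \ref{ThmAC-def}, so that Theorem \ref{weaksense-thm} identifies it with the mild solution.

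For the generation part I would apply the Lumer--Phillips theorem to $G := -\cala_\H - \beta I$. Dissipativity is immediate from semicoercivity: for $u \in \dom(\cala_\H)$ one has $(\cala_\H u, u)_\H = \langle A_V u, u\rangle = a(u,u)$, whence by \eqref{semicoercive} $(Gu,u)_\H = -a(u,u) - \beta\|u\|_\H^2 \le -\alpha\|u\|_V^2 \le 0$. The range condition amounts to solving $(\mu I + \cala_\H)u = h$ for given $h \in \H$ and some $\mu>\beta$, i.e.\ $a(u,v) + \mu(u,v)_\H = (h,v)_\H$ for all $v \in V$. The form on the left is continuous and, thanks to \eqref{semicoercive}, coercive on $V$ whenever $\mu\ge\beta$; Lax--Milgram then yields a unique $u\in V$, and since $A_V u = h-\mu u \in \H$ we get $u\in\dom(\cala_\H)$, so $\mu I + \cala_\H$ maps $\dom(\cala_\H)$ onto $\H$. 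Density of $\dom(\cala_\H)$ in $\H$, needed to apply Lumer--Phillips, follows by a short direct argument: if $h\in\H$ is orthogonal to $\dom(\cala_\H)$, then $w := (\mu I + \cala_\H)^{-1}h$ satisfies $0 = (h,w)_\H = \mu\|w\|_\H^2 + a(w,w) \ge \alpha\|w\|_V^2$, forcing $w=0$ and hence $h=0$. Lumer--Phillips makes $G$ the generator of a contraction $C_0$-semigroup, and therefore $-\cala_\H = G + \beta I$ generates $e^{-t\cala_\H}$ with $\|e^{-t\cala_\H}\|_{\L(\H)} \le e^{\beta t}$, which is precisely \eqref{cond:bounded} with $c_\A = 1$ and $\lambda_\A = \beta$.

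For the coincidence of solutions, let $\Psi \in W(0,T)$ be the variational solution of \eqref{eq-vf1}. Since $W(0,T) \hookrightarrow C(0,T;\H)$ and, by the integration-by-parts formula in $W(0,T)$ (Lions--Magenes theory \cite{LioMag68a}), the map $t\mapsto (\Psi(t),\phi)_\H$ is absolutely continuous with $\ddt (\Psi(t),\phi)_\H = \langle \dot\Psi(t),\phi\rangle_V$ for each fixed $\phi\in V$, I would take $v=\phi$ in \eqref{eq-vf1} for $\phi\in\dom(\cala^*_\H)\subset V$. Writing $a(\Psi(t),\phi) = \langle A^*_V\phi,\Psi(t)\rangle = (\cala^*_\H\phi,\Psi(t))_\H$, where the last equality uses $A^*_V\phi = \cala^*_\H\phi\in\H$ together with the fact, noted before the statement, that $\cala^*_\H$ is the $\H$-adjoint of $\cala_\H$, equation \eqref{eq-vf1} turns into $\ddt\langle\phi,\Psi(t)\rangle + \langle\cala^*_\H\phi,\Psi(t)\rangle = \langle\phi,f(t)\rangle$ for a.a.\ $t$. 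This is exactly \eqref{weak_ball} with the control term absent, so $\Psi$ is a weak solution, and Theorem \ref{weaksense-thm} gives that it coincides with the mild solution.

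The individual steps are routine; the point demanding the most care is the generation argument, specifically getting the sign conventions and the shift by $\beta$ right so that the single hypothesis \eqref{semicoercive} simultaneously yields dissipativity of $-\cala_\H-\beta I$ and, through Lax--Milgram, the surjectivity of $\mu I + \cala_\H$. The identification of the $\H$-adjoint of the unbounded operator $\cala_\H$ with the operator $\cala^*_\H$ built from the adjoint form is also used, but it is granted by the remark preceding the lemma.
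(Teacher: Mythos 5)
Your proof is correct, and for the generation half it takes a genuinely different route from the paper's. The paper invokes the Hille--Yosida theorem and then only sketches its verification: it introduces the shifted form $a_\beta(y,z):=a(y,z)+\beta(y,z)_\H$ and applies Lax--Milgram (the proof actually writes $a$ where $a_\beta$ is evidently meant, since $a$ alone is only semicoercive) to show that the shifted operator maps $\dom(\cala_\H)$ onto $\H$; the resolvent estimates required by Hille--Yosida and the density of $\dom(\cala_\H)$ are left implicit. You instead apply Lumer--Phillips to $G=-\cala_\H-\beta I$: dissipativity is read off directly from \eqref{semicoercive}, the range condition is the same Lax--Milgram argument applied to the coercive form $a+\mu(\cdot,\cdot)_\H$ with $\mu\ge\beta$, and you supply the density of the domain by an explicit orthogonality argument. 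What this buys is completeness and economy: contractivity of the Lumer--Phillips semigroup makes the iterated resolvent bounds automatic rather than something to verify, and the explicit growth bound $\|e^{-t\cala_\H}\|_{\L(\H)}\le e^{\beta t}$, i.e.\ \eqref{cond:bounded} with $c_\A=1$ and $\lambda_\A=\beta$, falls out as a by-product. For the identification half your argument and the paper's are essentially the same: show the variational solution is a weak solution in the sense of Definition \ref{ThmAC-def} and invoke Theorem \ref{weaksense-thm}. The only difference is technical: the paper tests \eqref{eq-vf1} with $v=\psi\varphi$, $\psi\in\cald(0,T)$, $\varphi\in\dom(A^*_V)$, and uses arbitrariness of $\psi$ to localize in time, whereas you test directly with $\varphi$ and appeal to the Lions--Magenes fact that $t\mapsto(\Psi(t),\varphi)_\H$ is absolutely continuous with derivative $\la\dot\Psi(t),\varphi\ra_V$; the two devices are interchangeable, and both rely on the remark preceding the lemma identifying $\cala^*_\H$ as the $\H$-adjoint of $\cala_\H$.
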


\begin{proof}
We first check that $\A_\H$ is the generator of a
semigroup thanks to the Hille-Yosida Theorem.
Let $\beta$ be given by the semicoercivity condition \eqref{semicoercive}. 
Set $a_\beta(y,z):= a(y,z) + \beta (y,z)_\H$. 
Let $f\in \H$. By the Lax-Milgram Theorem, there exists a
unique $y\in V$ such that
\be
a(y,z) = (f,z)_\H, \quad
\text{for all $v\in V$},
\ee
and in addition 
\be
| \la A_V y,z\ra | = | a(y,z) | = | (f,z)_\H | \leq \|f\|_\H \|z\|_\H 
\ee
proving that $A_V y\in \H$, and therefore $y\in\dom(\A_\H)$.
Also, 
\be
( \A_\H y,z)_\H = \la A_V y,z\ra_V = (f,z)_\H,
\ee
for any $z\in V$ (and therefore for any $z\in \H$),
means that $\A_\H y =f$.

In order to end the proof, in view of Theorem \ref{weaksense-thm}, it suffices to prove that 
weak and variational solutions coincide.
We only need to check that the strong
formulation implies the weak one. 
Taking $v=\psi\varphi$ in \eqref {eq-vf1},
with $\psi\in \cald(0,T)$ and $\varphi\in \dom(A^*_V)$
we get
\be
\int_0^T \psi(t) 
\left[ \la \dot \Psi(t) , \varphi \ra 
+ a( \Psi(t), \varphi) - \la f(t), \varphi\ra \right] \dd t
=0.
\ee
Since $\psi$ is an arbitrary element of 
$\cald(0,T)$, the $L^2(0,T)$
function in the brackets is necessarily equal to zero.
We conclude observing that 
$a( \Psi(t), \varphi) = \la A_V^* \varphi, \Psi(t) \ra_V$
for a.a. $t$. 
\end{proof}

\begin{theorem}
Let hypothesis \eqref{hyp-Goh-tr1} hold. Then
the compactness condition 
\eqref{compachyp} is satisfied, and 
problem \eqref{problem:OC}
 has a nonempty set of minima.
\end{theorem}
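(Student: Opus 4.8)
The plan is to verify the compactness condition \eqref{compachyp} using the Aubin compact embedding recorded in \eqref{Aubin}, and then to invoke Theorem \ref{existsol} for the existence of minima. Fix $y_0\in\H$. Since the equation defining $y[y_0,f]$ is linear, the map $f\mapsto y[y_0,f]$ is affine; writing the mild solution we have $y[y_0,f](t)=e^{-t\A}y_0+(Lf)(t)$, where $(Lf)(t):=\int_0^t e^{-(t-s)\A}f(s)\,\dd s$ is the solution operator with null initial data. The first summand is independent of $f$, so proving compactness of $f\mapsto\B_2 y[y_0,f]$ reduces to proving compactness of the linear operator $f\mapsto\B_2 Lf$.

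First I would show that $L$ is bounded from $L^2(0,T;\H)$ into $W(0,T)$. By Lemma \ref{lem:general_parab_setting} we are in the variational setting, where $\A=\A_\H$ generates the semigroup and the mild solution coincides with the variational one. Hence, for $f\in L^2(0,T;\H)\subset L^2(0,T;V^*)$, the function $Lf$ is the unique solution in $W(0,T)$ of \eqref{eq-vf1} with zero initial condition, and the Lions--Magenes energy estimate yields a constant $C$, independent of $f$, with $\|Lf\|_{W(0,T)}\leq C\,\|f\|_{L^2(0,T;\H)}$.

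Next I would compose with the compact embedding. By Aubin's Lemma \eqref{Aubin}, the inclusion $W(0,T)\subset L^2(0,T;\H)$ is compact, so $L$, regarded as an operator from $L^2(0,T;\H)$ to $L^2(0,T;\H)$, is compact. As $\B_2\in\L(\H)$ by \eqref{semg1-hyp}, the operator $f\mapsto\B_2 Lf$ is compact too. Finally, for any bounded set $B\subset L^2(0,T;\H)$ the set $\{\B_2 y[y_0,f];\; f\in B\}$ is the translate, by the fixed function $t\mapsto\B_2 e^{-t\A}y_0$, of the relatively compact set $\{\B_2 Lf;\; f\in B\}$, hence is itself relatively compact. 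Thus $f\mapsto\B_2 y[y_0,f]$ is compact from $L^2(0,T;\H)$ to $L^2(0,T;\H)$, which is exactly \eqref{compachyp}; Theorem \ref{existsol} then gives that \eqref{problem:OC} has a nonempty set of minima.

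The main obstacle is the continuity step $L\colon L^2(0,T;\H)\to W(0,T)$; the rest is assembly. This is the classical Lions--Magenes a priori estimate, which rests on the semicoercivity \eqref{semicoercive} of $a$: testing the equation with the solution and applying Gronwall controls $Lf$ in $L^2(0,T;V)$ (the term $-\beta\|\cdot\|_\H^2$ being absorbed), after which the equation itself bounds $\ddt(Lf)$ in $L^2(0,T;V^*)$. The hypothesis \eqref{hyp-Goh-tr1} only places us within the framework of the paper; the compactness argument itself uses merely $\B_2\in\L(\H)$ together with the compact inclusion $V\hookrightarrow\H$ entering through \eqref{Aubin}.
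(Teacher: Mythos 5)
Your proof is correct and follows essentially the same route as the paper: continuity of the solution map into $W(0,T)$ via the Lions--Magenes energy estimate, compactness from Aubin's Lemma \eqref{Aubin}, and existence via Theorem \ref{existsol}. You merely fill in details the paper's (very terse) proof glosses over, namely the affine splitting $y[y_0,f]=e^{-t\A}y_0+Lf$ with the translate argument, and the composition with $\B_2\in\L(\H)$.
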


\begin{proof}
By our hypotheses, the mapping 
$f\mapsto\Psih$ is continuous from
$L^2(0,T)$ into $W(0,T)$.
By \eqref{Aubin}, 
the mapping $u\mapsto\Psih[u]$ is compact
from $L^2(0,T)$ to $L^2(0,T;\H)$.
So, the compactness hypothesis
\eqref{compachyp} holds, and the existence of
a minimum follows from Theorem \ref{existsol}.
\end{proof} 

\subsection{Heat equation}

\subsubsection{Statement of the problem}
We first write the optimal control in an informal way.
Let $\Om$ be a bounded open subset
of $\RR^n$ with $C^2$ boundary. 

The state equation, where $y=y(t,x)$,  is
 \be
\label{eq:heat-1}
\left\{
\begin{aligned}
  \frac{\partial y(t,x)}{\partial t} +\A_\H y(t,x)&=f(t,x)+u(b_1(x)+b_2(x) y(t,x)) &&
  \text{in }(0,T)\times \Om, \\
  y(0,x)&=y_0(x)  && \text{in } \Om, \\
  y(t,x)&=0 && \text{on } (0,T) \times \partial\Om.
\end{aligned}
\right.
\ee
Here $\A_\H$ stands for the differential operator
in divergence form, for $(t,x)\in (0,T)\times \Om$:
\be
\label{A_elliptic}
(\A_\H y)(t,x)= -\sum_{j,k=1}^n \frac{\partial}{\partial x_k} 
\left[a_{jk}(x) \frac{\partial y(t,x)}{\partial x_j}  \right],
\ee
where  $a_{jk}\in C^{0,1}(\bar \Om)$ satisfy, for each $x\in \bar\Om,$
the symmetry hypothesis 
$a_{jk} = a_{kj}$
as well as, for some $\nu>0$:
\be
\sum_{j,k=1}^n a_{jk}(x) \xi_j \xi_k \ge \nu |\xi|^2,\quad 
\text{for all $\xi\in \RR^n$, $x\in\Om$.}
\ee
Let $H :=L^2(\Om)$ and $V = H^1_0(\Om)$. 
We apply the abstract framework with 
$\H$ equal to $H$.
We choose $\dom(\A_\H) := H^2(\Om)\cap V$.
The pair $(H,V)$ satisfies the hypothesis of the abstract 
parabolic setting, namely, that $V$ is
continuously embedded in $H$, with dense and compact inclusion.
We next define $A_V\in L(V,V^*)$ by 
\be
\la A_V y,z \ra_V := \sum_{j,k=1}^n \int_\Om 
a_{jk}(x)  \frac{\partial y }{\partial x_j } 
 \frac{\partial z }{\partial x_k } \dd x,
\quad \text{for all $y$, $z$ in $V$}.
\ee
The bilinear form over $V$
defined by 
$a(y,z):= \la A_V y,z \ra_V$ 
is continuous and 
satisfies the semicoercivity condition \eqref{semicoercive}.
Since $A_V y=\A_\H y$ for all $y$ in $H^2(\Om)\cap V$,
$A_\H$ is nothing but the generator of the 
semigroup built in the previous section.
This semigroup is contracting, since 
the Hille Yosida characterization of a generator 
given in Lemma \ref{lem:general_parab_setting}
holds with $M=1$, $n=1$ and $\omega=0$.

In the sequel of this study of the heat equation,
we assume 
\be
\label{heat:data}
y_0 \in H; \quad
f\in C(0,T;H),
\quad
 b_1\in \dom(\A_\H), 
\quad 
b_2\in W^{2,\infty}_0(\Om).
\ee
The corresponding data of the abstract theory are
$\B_1:=b_1$ and $\B_2\in \L(H)$ defined by 
$(\B_2 y)(x) := b_2 (x) y(x)$ for $y$ in  $\calh$ and
$x\in \Om$.
By Lemma  \ref{lem:general_parab_setting}, equation
\eqref{eq:heat-1} has a mild solution $y$ in $C(0,T;\calh)$
which coincides with the variational solution in the sense of
\eqref{eq-vf1}.

The cost function is, given $\alpha \in \cR$: 
\be
\label{def-cost-fun-heat}
\ba{lll}
J(u, y) := & \disp 
\alpha \int_0^T  u(t) \dd t 
+
\half\int_{(0,T)\times\Om}
( y(t,x)-y_d(t,x) )^2 \dd x \dd t 
\\ & \disp \hspace{5mm}
+
 \half \int_\Om (y(T,x)-y_{dT}(x))^2 \dd x.
\ea\ee

We assume that 
\be
\label{hyp-yd-heat}
\quad 
y_d \in C(0,T;H); \quad  y_{dT} \in V.
\ee
For $u\in L^1(0,T)$, write the reduced cost as 
$F(u) := J(u,y[u])$.
The optimal control problem is, $\U_{ad}$
being defined in \eqref{Uad}:
\be
\label{pb-parab}
\Min F(u); \quad u \in \U_{ad}.
\ee

\subsubsection{Commutators}

Given $y\in\dom(\A_\H)$, we have by \eqref{A_elliptic}
that
\be
\label{rem-bracket-heat1a}
\ba{lll} 
M_1 y &=&
(\A_\H \B_2 - \B_2 \A_\H) y 
\\&=& \disp
-\sum_{j,k=1}^n \left( 
\frac{\partial}{\partial x_k} 
\left[a_{jk} \frac{\partial}{\partial x_j}  (b_2 y) \right]
-b_2 \frac{\partial}{\partial x_k} 
\left[a_{jk} \frac{\partial y}{\partial x_j}  \right] 
\right)
\\ &=& \disp
-\sum_{j,k=1}^n \left( 
\frac{\partial}{\partial x_k} 
\left[ b_2 
( a_{jk} \frac{\partial y}{\partial x_j} )  
+ a_{jk} 
y \frac{\partial b_2}{\partial x_j}  
\right]
-b_2 \frac{\partial}{\partial x_k} 
\left[a_{jk} \frac{\partial y}{\partial x_j}   \right] 
\right)
\\ &=& \disp
-\sum_{j,k=1}^n \left( 
\frac{\partial b_2}{\partial x_k}  
\left[a_{jk} 
\frac{\partial y}{\partial x_j}  \right]
+ \frac{\partial }{\partial x_k}   \left[
a_{jk} 
y \frac{\partial b_2}{\partial x_j}  
\right]
\right).
\ea\ee
As expected, this commutator is
 a first order differential operator that has a continuous extension
to the space $V$. 
In a similar way we can check that $[M_1,\B_2]$
is the ``zero order'' operator given by 
\be
[M_1,\B_2] y = - 2 \sum_{j,k=1}^n a_{jk} 
\frac{\partial b_2}{\partial x_j}  
\frac{\partial b_2}{\partial x_k}  y.
\ee

\begin{remark}
\label{rem-bracket-heat-pc}
In the case of the Laplace operator, i.e. when
$a_{jk} = \delta_{jk}$,  we find that
\be
\label{rem-bracket-heat1b}
M_1 y = (\A_\H \B_2 - \B_2 \A_\H) y 
=
2 \nabla b_2 \cdot \nabla y + y \Delta b_2;
\quad
[M_1,\B_2] y = 2 y | \nabla b_2 |^2,
\ee 
and then for $p\in V$: 
\be
\ba{lll}
( M^*_1 p,y)_\H &= \disp \int_\Om 
\left ( 2 \nabla b_2 \cdot \nabla y + y \Delta b_2 \right) p \dd x
\vspace{1mm}\\ & =\disp 
\int_\Om 
\left ( -2 \ddiv ( p \nabla b_2 )+ p \Delta b_2\right) y \dd x
\vspace{1mm}\\ & =\disp
\int_\Om 
\left ( 2 \nabla p \cdot \nabla b_2 - p \Delta b_2\right) y \dd x
\ea\ee
so that  we can write
\be
M^*_1 p = 2 \nabla p \cdot \nabla b_2 - p \Delta b_2.
\ee
We have similar expressions for 
$M_2$ and $M^*_2$, replacing $b_2$ by 
$b^2_2$. 
\end{remark}

\subsubsection{Analysis of the optimality conditions}
For the sake of simplicity we only discuss the case of the 
Laplace operator and assume that 
$b_1(x)=0$ for all $x\in \Om$. 
The costate equation is then 
\be
- \dot p - \Delta p = y - y_d + u b_2 p \; 
\text{ in $(0,T)\times \Om$;}  \quad  p(T) = y(T) - y_{dT}. 
\ee
Recalling the expression of $b^1_{z}$
in \eqref{equ-bunz}, we obtain that 
the equation for $\xi := \xi_z$ introduced in \eqref{diffeqxi} 
reduces to 
\be
\label{xi-equ-heat}
\dot\xi - \Delta \xi  = \uh b_2 \xi - w 
(b_2 f  + 2\nabla b_2\cdot \nabla y - y \Delta b_2)
\; \text{ in $(0,T)\times \Om$;}  \quad  \xi(0) =0. 
\ee
The quadratic forms $\Q$ and $\hatQ$ defined in \eqref{tildeQ} and \eqref{Omega}
 are as follows:
\be 
  \Q(z,v) = \int_0^T \Big( \norm{z(t)}{H}^2
 + 2 v(t) ( \ph(t),b_2  z(t) )_H  \Big)\dd t + \norm{z(T)}{H}^2,
 \ee
and as we recall from our general framework
\be
\hatQ(\xi,w,h) = \hatQ_T(\xi,h) +\hatQ_a(\xi,w)+\hatQ_b(w),
\ee
with $\hatQ_b(w)= \int_0^T w^2(t) R(t)\dd t$,
$R \in C(0,T)$, and 
\begin{align}\label{OmegaT-a}
\hatQ_T(\xi,h)&= \norm{\xi(T) + h b_2\yh(T)}{H}^2
 + h^2 ( \ph(T),b_2^2 \yh(T))_H +h ( \ph(T),b_2 \xi(T))_H,
\\
\hatQ_a(\xi,w)&= \int_0^T \Big( \norm{\xi}{H}^2 + 2 w(
2 b_2 \yh  -   b_2 y_d -
 2 \nabla \ph \cdot \nabla b_2 + \ph \Delta b_2, \xiz )_H   
\Big) \dd t,
\\
R(t)&=  \norm{b_2\yh}{H}^2 + (\yh-y_d,b_2^2\yh)_H + (\ph(t),b_2^2 f(t) 
- 
2|\nabla b_2|^2  \yh)_H.
\end{align}

\begin{theorem} 
Let $\uh$ be a weak minimum for problem \eqref{pb-parab}.
Then 
{\rm (i)}
the second order necessary condition 
\eqref{lem3-ome} holds, i.e.,
\be
\label{lem3-ome-haet}
  \hatQ (\xi[w],w,h) \ge 0\quad \text{for all } (w,h) \in PC_2(\uh),
  \ee
 {\rm (ii)} 
$R(t)\geq 0$ over singular arcs, 
\\ {\rm (iii)} 
if additionally \eqref{finite_structures}-\eqref{RposTBB} 
are satisfied, then the second order optimality condition
\eqref{sufcondso} holds iff  the quadratic growth condition
\eqref{sufcondqg}  is satisfied.
\end{theorem}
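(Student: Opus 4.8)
The plan is to recognize that all three assertions are instances of the abstract results of Sections~\ref{sec:soocg}--\ref{sec-suffici}, so the real work is to verify the standing hypotheses of those sections in the concrete setting fixed by \eqref{heat:data}--\eqref{hyp-yd-heat} (recall that here $\A=\A_\H=-\Delta$ and $b_1=0$). I would work with the abstract data $\H=H=L^2(\Om)$, $V=H^1_0(\Om)$, $\B_1=b_1$, $\B_2$ the multiplication by $b_2$, and take $E_1:=V$; note $\dom(\A_\H)=H^2(\Om)\cap V\subset V$, so the inclusion $\dom(\A)\subset E_1$ required for the Goh transform holds, and $V$ has the restriction property via the $\H^q$-scale of Section~\ref{sec:notation-sch-d} (with $V=\H^1$).

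First I would check \eqref{hyp-Goh-tr1}. Point (i) is trivial since $b_1=0\in\dom(\A_\H)$. For (ii), multiplication by $b_2\in W^{2,\infty}_0(\Om)$ maps $H^2(\Om)\cap V$ into itself, the Dirichlet condition being preserved because $b_2$ vanishes on $\partial\Om$, and $\B_2=\B_2^*$ is self-adjoint, so both invariances hold. Point (iii) is the content of \eqref{rem-bracket-heat1a}: the brackets $M_1=[\A_\H,\B_2]$ and $M_2=[\A_\H,\B_2^2]$ are first-order differential operators with bounded coefficients, hence continuous from $V$ into $H$. For (iv) we have $f\in C(0,T;H)\subset L^\infty(0,T;H)$, while $M_k^*\ph\in L^\infty(0,T;H)$ follows from the costate regularity discussed below. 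For (v), the state satisfies $\Psih\in W(0,T)\subset L^2(0,T;V)=L^2(0,T;E_1)$ by the energy estimate, and $[M_1,\B_2]\Psih$ is, by \eqref{rem-bracket-heat1b}, a bounded multiplier applied to $\Psih\in C(0,T;H)$, hence in $L^\infty(0,T;H)$. With \eqref{hyp-Goh-tr1} in force and $\uh$ a weak minimum, assertion (i) is precisely Lemma~\ref{lem3}.

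For assertion (ii) it remains to produce the compactness hypothesis \eqref{comp-hyp-xi}, the requirement $y_d\in L^\infty(0,T;H)$ being guaranteed by \eqref{hyp-yd-heat}. The map $w\mapsto\xi[w]$ is linear, and $\xi$ solves the parabolic equation \eqref{xi-equ-heat} with source $w\,b^1_z$, where $b^1_z\in L^2(0,T;H)$ by \eqref{equ-bunz}; thus $w\mapsto\xi[w]$ is bounded from $L^2(0,T)$ into $W(0,T)$, and the compact embedding \eqref{Aubin} of $W(0,T)$ into $L^2(0,T;H)$ gives the claimed compactness. Corollary~\ref{cor2} then yields $R\ge0$ on every singular arc.

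The one genuinely technical step, and the main obstacle, is the extra regularity of the costate needed for \eqref{hyp-Goh-tr2}(ii), namely $M_k^*\ph\in C(0,T;H)$. By Remark~\ref{rem-bracket-heat-pc} one has $M_1^*\ph=2\nabla\ph\cdot\nabla b_2-\ph\,\Delta b_2$ (the $M_2^*$ case is identical with $b_2^2$), so it suffices to prove $\ph\in C(0,T;V)$. I would argue that the forward smoothing of the analytic heat semigroup gives $\Psih(T)\in V$, whence the terminal datum $\ph(T)=\Psih(T)-y_{dT}\in V$ since $y_{dT}\in V$; that the costate source $(\Psih-y_d)+\uh\,b_2\ph$ lies in $L^2(0,T;H)$, being the sum of a $C(0,T;H)$ term and a bounded multiplier applied to $\ph\in C(0,T;H)$; and that maximal $L^2$-regularity for the backward heat equation, with $H$-valued source and terminal datum in the trace space $V=\dom(\A^{1/2})$, then places $\ph$ in $L^2(0,T;\dom(\A))\cap H^1(0,T;H)\hookrightarrow C(0,T;V)$. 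This yields $M_k^*\ph\in C(0,T;H)$, and the remaining parts of \eqref{hyp-Goh-tr2}(i), namely $\B_2^2 f=b_2^2 f\in C(0,T;H)$ and $y_d\in C(0,T;H)$, are immediate. Granting the assumed structural conditions \eqref{finite_structures}--\eqref{RposTBB}, assertion (iii) is then exactly Theorem~\ref{thm-sufcond}.
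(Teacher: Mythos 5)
Your overall route is exactly the paper's: take $E_1:=V$, verify \eqref{hyp-Goh-tr1} from \eqref{heat:data}, \eqref{hyp-yd-heat} and the commutator computations, obtain the costate regularity $\ph\in L^2(0,T;V\cap H^2(\Om))\cap H^1(0,T;H)\subset C(0,T;V)$ (you supply, correctly, the forward-smoothing step $\yh(T)\in V$ and the backward maximal-regularity argument that the paper leaves implicit), check \eqref{hyp-Goh-tr2}, and then invoke Lemma \ref{lem3}, Corollary \ref{cor2} and Theorem \ref{thm-sufcond}. All of those verifications are sound and, if anything, more detailed than the paper's.

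The one step that fails is your justification of the compactness hypothesis \eqref{comp-hyp-xi}, which the paper dismisses as ``a standard result''. You claim: since $b^1_z\in L^2(0,T;H)$, the map $w\mapsto\xi[w]$ is bounded from $L^2(0,T)$ into $W(0,T)$. This inference does not hold: for $w\in L^2(0,T)$ and $b^1_z\in L^2(0,T;H)$ the source $w\,b^1_z$ lies only in $L^1(0,T;H)$, and $L^1(0,T;H)\not\subset L^2(0,T;V^*)$ (consider $g(t)=t^{-3/4}\phi$ with $0\neq\phi\in H$), so the Lions--Magenes theory gives no $W(0,T)$ bound. Nor can you upgrade $b^1_z$ to $L^\infty(0,T;H)$: under \eqref{heat:data} only $y_0\in H$ is assumed, so $\yh\in L^2(0,T;V)$ but not $L^\infty(0,T;V)$ near $t=0$, while $b^1_z$ contains $M_1\yh=2\nabla b_2\cdot\nabla\yh+\yh\Delta b_2$ by \eqref{equ-bunz} and \eqref{rem-bracket-heat1b}. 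The repair is to measure $b^1_z$ in $V^*$ instead of $H$: writing
\be
M_1\yh \;=\; 2\,\ddiv\big(\yh\,\nabla b_2\big)-\yh\,\Delta b_2,
\qquad
\|M_1\yh(t)\|_{V^*}\;\le\; C\,\|\yh(t)\|_H,
\ee
shows $b^1_z\in L^\infty(0,T;V^*)$, hence $w\,b^1_z\in L^2(0,T;V^*)$ with norm $O(\|w\|_2)$; combined with $\uh\, b_2\,\xi\in L^\infty(0,T;H)$ (recall $\xi\in C(0,T;H)$ by Lemma \ref{eq-smgs}), the variational theory then does put $\xi[w]$ in a bounded set of $W(0,T)$, and Aubin's Lemma \eqref{Aubin} gives the desired compactness. (Alternatively one can truncate $b^1_z$ in time and use total boundedness, or invoke compactness of the Dirichlet heat semigroup.) With this step repaired, your argument coincides with the paper's proof.
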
 

\begin{proof}
 (i)
It suffices to check the hypotheses for Lemma \ref{lem3}.
Relations  \eqref{hyp-Goh-tr1}, where we choose 
$E_1 := V$, follows from 
\eqref{heat:data}, \eqref{hyp-yd-heat},
and the above computation of commutators.
Since $y_{dT} \in V$ we have that 
\be
\ph \in L^2(0,T;V\cap H^2(\Om))\cap H^1(0,T;H)\subset C(0,T;V),
\ee
so that $M^*_1 \ph \in C(0,T;H)$.
Point (i) follows. 
\\ (ii)
This follows from Corollary \ref{cor2},
the compactness hypothesis \eqref{comp-hyp-xi}
being a standard result.
\\ (iii)
We apply Theorem \ref{thm-sufcond},
which assumes hypothesis \eqref{hyp-Goh-tr2}, and the latter 
are satisfied in our present setting. 
\end{proof}


\begin{remark}
In the present framework, the generator of the semigroup is
diagonalizable with a sequence of real eigenvalues
$\mu_k \rar \infty$. 
By \eqref{sec:notation-sch-d-h2}, the space $\H^2$ 
of section \ref{sec:notation-sch-d} coincides with 
$H^2\cap V$. 
\end{remark}

\begin{remark}
It is not difficult to extend such results
 for more general differential operators of the type,
where the $a_{jk}$ are as before, 
$b\in C^{0,1}(\Om)^n$ and $c\in C^{0,1}(\Om)^n$:
\be \label{general_diff_op}
(\A_\H y)(t,x)= -\sum_{j,k=1}^n \frac{\partial}{\partial x_k} 
\left[a_{jk}(x) \frac{\partial}{\partial x_j}  y (t,x) \right]
 + \sum_{j=1}^n \frac{\partial ( b_{j}(x) y(t,x)) }{\partial x_j}    + c y(t,x).
\ee
\end{remark}

\subsection{Wave equation}
\subsubsection{Statement of the problem} 
Again, let $\Omega$ be an 
open bounded subset of $\RR^n$ with $C^2$
boundary.
 The state equation is 
\be
\label{eq:wave}
\left\{
\begin{aligned}
  \frac{\partial^2 y_1(t,x)}{\partial t^2} 
 + \A_\H y_1(t,x) &=f_2(t,x)+u(b_1(x) + b_2(x) y_1(t,x))&& \text{in
 }(0,T)\times \Om, 
\\
  y_1(0,x)&=y_{01}(x), \quad 
\frac{\partial}{\partial t}y_1(0,x)=y_{02}(x) && \text{in } \Om, \\
  y_1(t,x)&=0 && \text{on } (0,T) \times \partial\Om,
\end{aligned}
\right.
\ee
with $\A_\H$ as defined in \eqref{A_elliptic},
and again $a_{jk}\in C^{0,1}(\bar \Om)$.
Setting $y_2(t) := \dot y_1(t)$,
we can reformulate the state equation as a 
first-order system in time given by
 \be
\label{eq:wave-semigroup}
\begin{aligned}
  \dot y + \A_W y &=f+u(\B_1 + \B_2 y) \quad t\in (0,T), \quad y(0)=y_0, 
\end{aligned}
\ee
with 
\be\label{wave:matrices}
\A_W := \begin{pmatrix} 0 & -\id \\ \A_\H & 0\end{pmatrix},\quad 
\B_1:=\begin{pmatrix}
      0 \\ b_1
     \end{pmatrix},
\quad
\B_2:=\begin{pmatrix}
      0 & 0\\ b_2 &0 \\
     \end{pmatrix},
     \quad
     f:=\begin{pmatrix}
           0 \\ f_2
          \end{pmatrix},\quad y_0=\begin{pmatrix} y_{01} \\ y_{02}
          \end{pmatrix}.
\ee
Set $\calh:=V \times H$ with $V:=H^1_0(\Om)$ and $H:=L^2(\Om)$.
We 
endow the space $\calh$ with the norm 
\be
\| y\|_\H := \left( \|y_1\|^2_V + \|y_2\|^2_H,
\right)^{1/2},
\ee
where for $z\in V$:
\be
\|z\|_V := \left( \sum_{i,k=1}^n  a_{i,k}(x) \int_\Om
\frac{\partial z}{\partial x_j}\frac{\partial z}{\partial x_k}
\dd x \right)^{1/2}.
\ee
It is known that $\A_W$ is the generator of a contraction semigroup with
$\dom(\A_W)=H^2_{0,1} \subset \calh$, where 
\be 
H^2_{0,1}:=( H^2(\Om)\cap  V)  \times  V.
\ee 
We verify the Hille Yosida characterization of a generator of a
contraction semigroup \eqref{Hille_Yosida} with $n=1$ and $\omega=0$
given by,
for $\lambda>0$:
\be\label{energy_est}
\norm{y_1}{V}^2 + \norm{y_2}{H}^2 
\le 
\frac{1}{\lambda ^2} \left(\norm{f_1}{V}^2 + \norm{f_2}{H}^2\right).
\ee
Indeed, consider for $(f_1,f_2)\in  \H$
the system 
\begin{align}
\lambda (y_1,v)_{ V } - (y_2,v)_{ V } &=(f_1,v)_{ V }, && \text{for all }v\in  V ,\\
(A_\H y_1,w)_{H} + \lambda (y_2,w)_{H} &= (f_2,w)_{H}, && 
\text{for all }w\in H.
\end{align}
Estimate \eqref{energy_est} follows by setting $(v,w)=(y_1,y_2)$, 
 adding the two equations, and using the Cauchy Schwarz
inequality.
Taking $\lambda=0$ and $f=0$ we obtain by similar
arguments that the operator $\A_W$ is antisymmetric. 
One can also rely on the eigenvector decomposition.
See more in  
\cite[p. 59, vol. I]{MR2273323}. 

In this section we assume
\begin{align}\label{wave1}
 (y_{01},y_{02}) \in H^{2,1}_0,\quad
 b_1    & \in H^2(\Om)\cap V, \;\;  b_2     \in
 W^{1,\infty}_0(\Om),\quad  f_2 \in L^2(0,T;V).
\end{align}

 \begin{lemma}\label{lem:wave1}
Under the assumptions \eqref{wave1} equation
\eqref{eq:wave} has a unique mild
solution $y$ in $C(0,T;H^{2,1}_0)$.
\end{lemma}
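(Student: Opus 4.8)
The plan is to obtain the claimed $H^{2,1}_0$-regularity by invoking the abstract regularity result Lemma~\ref{lem-reg.l1} with the choice $E := \dom(\A_W) = H^{2,1}_0$, as suggested by Remark~\ref{lem-reg.l1-r}. Since the data in \eqref{wave1} yield $y_0 \in \H$, $\B_1 \in \H$, $\B_2 \in \L(\H)$ and $f \in L^1(0,T;\H)$ (for $\B_2$ and $f$ one checks $\|b_2 y_1\|_H \le \|b_2\|_\infty\|y_1\|_H \le C\|y_1\|_V$ and $f_2 \in L^2(0,T;V)\subset L^1(0,T;H)$), the general well-posedness established at the beginning of Section~\ref{sec:abset} already guarantees a unique mild solution $y \in C(0,T;\H)$; the only thing left to prove is that this solution in fact lives in $C(0,T;H^{2,1}_0)$. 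Uniqueness in the smaller space is then automatic, because any mild solution in $C(0,T;H^{2,1}_0)$ is a fortiori a mild solution in $C(0,T;\H)$ and hence coincides with the unique one.

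To apply Lemma~\ref{lem-reg.l1} I must check that $E = \dom(\A_W)$ enjoys the restriction property of Definition~\ref{def:restr_prop}. The semigroup part (that $e^{-t\A_W}$ leaves $\dom(\A_W)$ invariant and restricts to a continuous semigroup there) is exactly the content of Remark~\ref{lem-reg.l1-r}, so it remains to verify $\B_1 \in \dom(\A_W)$ and $\B_2 \in \L(\dom(\A_W))$. For the first, $\B_1 = (0,b_1)^\top$ with $b_1 \in H^2(\Om)\cap V \subset V$, hence $\B_1 \in (H^2(\Om)\cap V)\times V = \dom(\A_W)$. For the second, a direct computation gives $\B_2 y = (0,\, b_2 y_1)^\top$ for $y=(y_1,y_2)$, so the point is to show $b_2 y_1 \in V = H^1_0(\Om)$ whenever $y_1 \in H^2(\Om)\cap V$. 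Writing $\nabla(b_2 y_1) = y_1 \nabla b_2 + b_2 \nabla y_1$ and using $b_2 \in W^{1,\infty}_0(\Om)$ (so $b_2,\nabla b_2 \in L^\infty$) together with $y_1, \nabla y_1 \in L^2$, both terms are in $L^2$, so $b_2 y_1 \in H^1$; the homogeneous Dirichlet condition is inherited from $b_2 \in W^{1,\infty}_0$, whence $b_2 y_1 \in V$.

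The boundedness of $\B_2$ on $\dom(\A_W)$ then follows from the identification of the graph norm with the product $H^2\times V$ topology: since $a_{jk}\in C^{0,1}(\bar\Om)$ and $\partial\Om$ is $C^2$, elliptic regularity gives $\|y_1\|_{H^2} \le C(\|y_1\|_V + \|\A_\H y_1\|_H)$, so that $\|y\|_{\dom(\A_W)}^2 \sim \|y_1\|_{H^2}^2 + \|y_2\|_V^2$. Because $\B_2 y$ has vanishing first component, $\|\B_2 y\|_{\dom(\A_W)} \sim \|b_2 y_1\|_V \le C \|b_2\|_{W^{1,\infty}}\,\|y_1\|_{H^2} \le C'\|y\|_{\dom(\A_W)}$, giving $\B_2 \in \L(\dom(\A_W))$. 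This establishes the restriction property. Finally, the data are compatible with $E$: by \eqref{wave1}, $y_0=(y_{01},y_{02}) \in H^{2,1}_0 = \dom(\A_W)$, and $f=(0,f_2)^\top$ with $f_2 \in L^2(0,T;V) \subset L^1(0,T;V)$, so $f \in L^1(0,T;\dom(\A_W))$ (its $\dom(\A_W)$-norm reduces to $\|f_2\|_V$). Lemma~\ref{lem-reg.l1} then yields $y \in C(0,T;\dom(\A_W)) = C(0,T;H^{2,1}_0)$.

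I expect the only genuinely delicate step to be the verification that $\B_2$ maps $\dom(\A_W)$ boundedly into itself, i.e.\ the pair of facts that $b_2 y_1$ stays in $H^1_0(\Om)$ and that this map is continuous in the graph norm. This is precisely where the assumption $b_2 \in W^{1,\infty}_0(\Om)$ in \eqref{wave1} is used (it is exactly the threshold keeping the product $b_2 y_1$ in $H^1$), together with the elliptic $H^2$-estimate needed to identify the $\dom(\A_W)$ topology. All other requirements are immediate from \eqref{wave1}.
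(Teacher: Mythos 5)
Your proof is correct and takes essentially the same route as the paper: the paper's one-line proof (``Consequence of Remark~\ref{lem-reg.l1-r}'') is precisely the application of Lemma~\ref{lem-reg.l1} with $E=\dom(\A_W)=H^{2,1}_0$ that you carry out. You merely make explicit the verification of the restriction property, namely $\B_1\in\dom(\A_W)$, $\B_2\in\L(\dom(\A_W))$ (where $b_2\in W^{1,\infty}_0(\Om)$ keeps $b_2y_1$ in $V$), and the compatibility $y_0\in\dom(\A_W)$, $f\in L^1(0,T;\dom(\A_W))$, which the paper leaves implicit.
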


\begin{proof}
Consequence of Remark \ref{lem-reg.l1-r}.
\end{proof}

Furthermore, let the cost be given by 
\be
\label{def-cost-fun-wawe}
\ba{lll}
J(u, y) := \disp
\alpha \int_0^T  u(t) \dd t 
+
\half\int_0^T
\| y(t)-y_d(t) \|^2_\H \dd t 
+
 \half \| y(T)-y_{dT} \|^2_\H.
\ea
\ee
We assume that 
\be\label{wave:yd}
y_d \in C(0,T;\calh); \quad y_{dT} \in \calh.
\ee
For $u\in L^1(0,T)$, write the reduced cost as 
$F(u) := J(u,y[u])$.
The optimal control problem is, $\U_{ad}$
being defined in \eqref{Uad}:
\be
\label{pb-cont-wave}
\Min F(u); \quad u \in \U_{ad}.
\ee 


\begin{lemma}
\label{wave-exist1}
Problem \eqref{pb-cont-wave} has at least one minimum.
\end{lemma}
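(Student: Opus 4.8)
The plan is to obtain existence from the general Theorem \ref{existsol}, whose sole hypothesis is the compactness condition \eqref{compachyp}; the entire task therefore reduces to verifying \eqref{compachyp} for the generator $\A_W$. Unlike the heat equation, the semigroup generated by $\A_W$ is \emph{not} compact (it merely conserves energy, cf. \eqref{energy_est}), so the required compactness cannot be harvested from the semigroup itself. It must instead come from the special structure of $\B_2$ in \eqref{wave:matrices}, which only reads the first component $y_1\in V$ of the state and feeds it, through multiplication by $b_2$, into the second component, an element of $H=L^2(\Om)$. Since the inclusion $V\hookrightarrow H$ is compact, this is precisely where a gain of compactness can be produced.

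Concretely, I would take a bounded sequence $(f_k)$ in $L^2(0,T;\calh)$ and set $y_k:=y[y_0,f_k]$, the mild solution of $\dot y+\A_W y=f_k$, $y(0)=y_0$. Because $\A_W$ generates a contraction semigroup, $(y_k)$ is bounded in $C(0,T;\calh)$, so $((y_k)_1)$ is bounded in $L^2(0,T;V)$ and $((y_k)_2)$ in $L^2(0,T;H)$. Reading off the first row of \eqref{eq:wave-semigroup}--\eqref{wave:matrices}, the first component obeys $\dot{(y_k)}_1=(y_k)_2+(f_k)_1$ as an identity in $L^2(0,T;V^*)$, and its right-hand side is bounded in $L^2(0,T;H)\subset L^2(0,T;V^*)$. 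Hence $((y_k)_1)$ is bounded in $W(0,T)$, and by Aubin's compact embedding \eqref{Aubin} (applied with the compact inclusion $V\hookrightarrow H$) a subsequence converges strongly in $L^2(0,T;H)$. Multiplication by $b_2\in W^{1,\infty}_0(\Om)$ being continuous on $H$, along this subsequence $\B_2 y_k=(0,b_2(y_k)_1)$ converges strongly in $L^2(0,T;\calh)$. As the contribution $e^{-t\A_W}y_0$ is independent of $f_k$, this shows that $f\mapsto \B_2 y[y_0,f]$ maps bounded sets of $L^2(0,T;\calh)$ to relatively compact sets, i.e. \eqref{compachyp} holds.

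With \eqref{compachyp} in hand, Theorem \ref{existsol} applies directly and yields a nonempty set of minima for \eqref{pb-cont-wave}. The main obstacle is exactly the first step: one must resist seeking smoothing from the non-smoothing wave propagator and instead exploit the off-diagonal, first-component-only structure of $\B_2$ together with the time regularity $\dot y_1=y_2+f_1$ supplied by the first-order reformulation, which is what places $(y_k)_1$ in $W(0,T)$ and thereby activates the compact embedding $W(0,T)\hookrightarrow\hookrightarrow L^2(0,T;H)$.
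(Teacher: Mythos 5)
Your proof is correct and takes essentially the same route as the paper: the paper also exploits the off-diagonal structure of $\B_2$ and applies Aubin's Lemma to the first component, noting that $f\mapsto b_2\,y_1[y_0,f]$ is continuous from $L^2(0,T;\calh)$ into $L^2(0,T;V)\cap H^1(0,T;H)$ (via $\dot y_1=y_2+f_1$), hence compact into $L^2(0,T;H)$. The only cosmetic difference is that you conclude by explicitly verifying \eqref{compachyp} and citing Theorem \ref{existsol}, whereas the paper finishes by passing to the limit in a minimizing sequence inline, which is the same argument.
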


\begin{proof}
Set $\tilde H:=L^2(0,T;\H)$.
By Aubin's Lemma, the mapping 
$f\mapsto b_2 y_1[y_0,f]$
is compact from $\tilde H$
into $L^2(0,T;H)$. 
Indeed, it is continuous
from $\tilde H$ to 
$L^2(0,T; V )
\cap H^1(0,T; H)$.
We then easily pass to the limit in a minimizing sequence
in the nonlinear term of the state equation, that involves
only the first component of the state.
\end{proof}

\subsubsection{Commutators} We have
\be 
M_1 =\begin{pmatrix}- b_2 & 0 \\ 0 &b_2\end{pmatrix};
\quad
[M_1,\B_2] =\begin{pmatrix} 0 & 0 \\ 2 b^2_2 &0\end{pmatrix}; \quad M_2=0.
\ee
Here the commutator $M_1$ is a zero order differential operator.
\subsubsection{Analysis of optimality conditions}
Again, for the sake of simplicity we only discuss the case of the 
Laplace operator and assume that 
$b_1(x)=0$ for all $x\in \Om$. 

\begin{lemma} 
Let $d_1\in W^{1,\infty}(\Om)$ and 
$d_2$, $d_3$ belong to $L^\infty(\Om)$.
Define $N\colon \H \rar \H$ by 
\be
N y := \begin{pmatrix} d_1 & 0 \\ d_2 & d_3\end{pmatrix}
\begin{pmatrix} y_1 \\ y_2\end{pmatrix}
=
\begin{pmatrix} d_1 y_1 \\ d_2 y_1 + d_3 y_2\end{pmatrix}.
\ee
Then with the same convention 
\be
N^*v =
\begin{pmatrix}  
A_V^{-1}(d_1 A_V v_1 ) +   A_V^{-1}(d_2 v_2) \\ d_3 v_2
\end{pmatrix}.
\ee
\end{lemma}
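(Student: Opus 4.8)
The plan is to compute $N^*$ directly from the defining relation of the adjoint on the Hilbert space $\H=V\times H$, namely $(Ny,v)_\H=(y,N^*v)_\H$ for all $y,v\in\H$, where $(y,v)_\H=(y_1,v_1)_V+(y_2,v_2)_H$ and $(\cdot,\cdot)_V=a(\cdot,\cdot)=\la A_V\cdot,\cdot\ra$ is the energy inner product, which is symmetric because $a_{jk}=a_{kj}$ (here the Laplacian). Expanding the left-hand side through the block form of $N$ gives
\[
(Ny,v)_\H=(d_1 y_1,v_1)_V+(d_2 y_1,v_2)_H+(d_3 y_2,v_2)_H,
\]
so the task reduces to rewriting each summand as $(y,\cdot)_\H$ and reading off the two components of $N^*v$ as the coefficient of $y_1$ in the $V$-slot and of $y_2$ in the $H$-slot.

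The second component is immediate: $(d_3 y_2,v_2)_H=(y_2,d_3 v_2)_H$, since multiplication by the real function $d_3$ is self-adjoint on $H$, whence $(N^*v)_2=d_3 v_2$. For the first component I would collect the two terms linear in $y_1$ and convert them into $V$-inner products against $y_1$, using the Gelfand triple $V\subset H\subset V^*$ together with the elementary identity $(u,A_V^{-1}\phi)_V=\la\phi,u\ra$, valid for all $\phi\in V^*$ and $u\in V$ (it follows from $a(u,A_V^{-1}\phi)=a(A_V^{-1}\phi,u)=\la\phi,u\ra$ using symmetry of $a$). For the coupling term this gives $(d_2 y_1,v_2)_H=\la d_2 v_2,y_1\ra=(y_1,A_V^{-1}(d_2 v_2))_V$, using $d_2 v_2\in H\subset V^*$. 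For the diagonal term I would write, again by symmetry, $(d_1 y_1,v_1)_V=a(d_1 y_1,v_1)=\la A_V v_1,d_1 y_1\ra=\la d_1 A_V v_1,y_1\ra=(y_1,A_V^{-1}(d_1 A_V v_1))_V$. Summing the two contributions yields $(N^*v)_1=A_V^{-1}(d_1 A_V v_1)+A_V^{-1}(d_2 v_2)$, which is the claimed formula.

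The one point needing care, and the main (if modest) obstacle, is the meaning of the product $d_1 A_V v_1$: since $A_V v_1$ lies only in $V^*$ in general, I would read multiplication by $d_1$ on $V^*$ as the transpose of multiplication by $d_1$ on $V$, that is $\la d_1\phi,w\ra:=\la\phi,d_1 w\ra$. This is legitimate precisely because $d_1\in W^{1,\infty}(\Om)$ makes $w\mapsto d_1 w$ a bounded operator $V\to V$ (this is where $W^{1,\infty}$, rather than merely $L^\infty$, is needed, to control $\nabla(d_1 w)=(\nabla d_1)w+d_1\nabla w$ and to preserve the zero trace), so its transpose maps $V^*\to V^*$ and $d_1 A_V v_1\in V^*$ is well defined; applying $A_V^{-1}\colon V^*\to V$ then shows $(N^*v)_1\in V$. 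With this convention the identity $\la d_1 A_V v_1,y_1\ra=\la A_V v_1,d_1 y_1\ra$ used above holds by definition, and I would close by noting that $d_2 v_2,d_3 v_2\in H$, so both components of $N^*v$ lie in the correct spaces, confirming that the stated expression is indeed the adjoint.
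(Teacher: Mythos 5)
Your proof is correct and follows essentially the same route as the paper's: expand $(Ny,v)_\H$ term by term, use the symmetry of $A_V$ (equivalently of the form $a$) together with the identity $\la\phi,u\ra=(u,A_V^{-1}\phi)_V$ and the Gelfand-triple identification $H\subset V^*$ to read off the two components of $N^*v$. Your extra paragraph justifying the meaning of $d_1 A_V v_1$ via the transpose of the bounded multiplication operator $w\mapsto d_1 w$ on $V$ (which is where $d_1\in W^{1,\infty}(\Om)$ is used) is a point the paper's proof passes over silently, so it is a welcome clarification rather than a deviation.
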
 

\begin{proof} 
Let $y$, $z$ belong to $\H$, then 
\be
(z,Ny)_\H =  (z_1,d_1 y_1)_V+(z_2,d_2 y_1)_H +(z_2,d_3 y_2)_H.
\ee
Clearly
\be
(z_1,d_1 y_1)_V = \la A_V z_1, d_1 y_1\ra_V  
= (A_V^{-1}(d_1 A_V z_1), y_1)_V.
\ee
Now
\be
(z_2,d_2 y_1)_H =
(d_2 z_2, y_1)_H = (A_V^{-1}(d_2 z_2), y_1)_V.
\ee
Finally 
\be
(z_2,d_3 y_2)_H = (d_3 z_2, y_2)_H. 
\ee
The result follows.
\end{proof} 
Note that the above results uses the fact that $A_V$
is a symmetric operator. 
As a consequence
\be
M^*_1 \ph = \begin{pmatrix}  
- A_V^{-1}(b_2 A_V \ph_1)\\ b_2 \ph_2
\end{pmatrix}; \\
\quad
( M^*_1 \ph, \xi)_\H = 
- \la b_2 A_V \ph_1, \xi_1\ra_V + (b_2 \ph_2,\xi_2)_H.
\ee
One easily checks that 
$\A^*_W 
= \begin{pmatrix} 0 & \id \\ -\A_\H & 0\end{pmatrix}$
has the same domain as $\A_W$.
Therefore the costate equation reads 
\be
\left\{ \ba{lll}
- \dot p_1 - p_2 &= u \A_\H^{-1}(b_2 \ph_2) + y_1 - y_{1d},
\\
- \dot p_2  + \A_\H p_1 &= y_2 - y_{2d},
\ea\right. \ee
with final condition 
$p(T) = y_{dT}$.

 The equation in $\xi:=\xi_z$ introduced in \eqref{diffeqxi} is given by
\be\label{xi-wave}
\dot\xi + \A_W \xi  = \uh \B_2 \xi + w b^1_z; \quad \xi(0)=0
\quad\text{with}\quad
b^1_{z} 
=
-\B_2 f -M_1 \yh.
\ee
Since $\B_2f=0$ the dynamics for $\xi$ reduces to 
\be
\label{xi-wave-comp} \left\{\ba{lll}
\dot\xi_1 - \xi_2 &= w b_2 \yh_1,
\\
\dot\xi_2 + \A_\H \xi_1 &= \uh b_2 \xi_1 - w b_2 \yh_2.
\ea\right.\ee
The quadratic forms $\Q$ and
$\hatQ$ defined in \eqref{tildeQ} and  \eqref{Omega}:
First
\be
  \Q(z,v) = \int_0^T \Big( \norm{z(t)}{\H}^2
 + 2 v(t) ( \ph_2(t), b_2  z_1(t) )_H  \Big)\dd t + \norm{z(T)}{\H}^2,
 \ee
and second, $ \hatQ(\xi,w,h) = \hatQ_T(\xi,h) +\hatQ_a(\xi,w)+\hatQ_b(w)$,
where 
\be
\hatQ_b(w)= \int_0^T w^2(t) R(t)\dd t.
\ee
 Here, $R\in C(0,T)$ and
\begin{align}\label{OmegaT-w}
\hatQ_T(\xi,h)&= \norm{\xi_1(T)}{V}^2  + \norm{\xi_2(T)+h b_2 \yh_1(T)}{H}^2
  +h ( \ph_2(T),b_2 \xi_1(T) )_H,
\\
\hatQ_a(\xi,w)&= \int_0^T \Big( \norm{\xi}{\H}^2 + 2 w (\xi_2,b_2 \yh_1)_H \Big)\dd t \\
 &\quad + \int_0^T \Big(2 w (\yh_2 - y_{2d},b_2\xiz_1)_H 
+  2w ( \la b_2 A_V \ph_1, \xi_1\ra_V - (b_2 \ph_2,\xi_2)_H)
\Big) \dd t, \\
R(t)&=  \norm{b_2\yh_2}{H}^2  - 2(\ph_2(t), b_2^2 \yh_1)_H.
\end{align}

\begin{theorem} 
Let $\uh$ be a weak minimum for problem \eqref{pb-cont-wave}.
Then
{\rm (i)}
The second order necessary condition 
\eqref{lem3-ome} holds, i.e.,
\be
  \hatQ (\xi[w],w,h) \ge 0\quad \text{for all } (w,h) \in PC_2(\uh).
  \ee
 {\rm (ii)} 
$R(t)\geq 0$ over singular arcs. 
\\ {\rm (iii)} 
Let \eqref{finite_structures}-\eqref{RposTBB} 
hold. Then the second order optimality condition
\eqref{sufcondso} holds iff  the quadratic growth condition
\eqref{sufcondqg}  is satisfied.
\end{theorem}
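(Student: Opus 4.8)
The plan is to obtain the three assertions exactly as for the heat equation, by specialising the abstract results of Sections~\ref{sec:soocg}--\ref{sec-suffici}: part~(i) will come from the necessary condition of Lemma~\ref{lem3}, part~(ii) from Corollary~\ref{cor2}, and part~(iii) from Theorem~\ref{thm-sufcond}. The whole matter therefore reduces to checking, in the present hyperbolic setting, the structural hypotheses \eqref{hyp-Goh-tr1} and \eqref{hyp-Goh-tr2} together with the compactness requirements \eqref{compachyp} and \eqref{comp-hyp-xi}. The feature that makes the verification of the structural hypotheses almost automatic is that, as displayed in the commutator computation, $M_1$ is a \emph{zero-order} (bounded) operator on $\H=V\times H$ and $M_2=0$; this contrasts sharply with the parabolic case, where $M_1$ was a genuine first-order differential operator and forced the choice $E_1=V$.

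First I would verify \eqref{hyp-Goh-tr1} with the admissible choice $E_1:=\H$, which is legitimate precisely because the brackets are bounded on $\H$. Conditions (i)--(ii) are immediate: $\B_1=(0,b_1)^\top\in\dom(\A_W)$ since $b_1\in H^2(\Om)\cap V$ by \eqref{wave1}, and $\B_2$ maps $\dom(\A_W)=(H^2(\Om)\cap V)\times V$ into itself because multiplication by $b_2\in W^{1,\infty}_0(\Om)$ preserves $V$; the same holds for the adjoints. Condition~(iii) is trivial, the brackets $[\A_W,\B_2]=M_1$ and $[\A_W,\B_2^2]=M_2=0$ being already bounded on $E_1=\H$, and (v) follows from Lemma~\ref{lem:wave1}, which gives $\Psih=\yh\in C(0,T;H^{2,1}_0)$, so that $\Psih\in L^2(0,T;\H)$ and $[M_1,\B_2]\Psih=(0,2b_2^2\yh_1)^\top\in L^\infty(0,T;\H)$. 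The one point deserving a word is (iv): although it asks for $f\in L^\infty(0,T;\H)$, the two algebraic identities $\B_2^2=0$ and $\B_2 f=0$ --- the latter because $\B_2$ annihilates the second slot while $f=(0,f_2)^\top$ has vanishing first slot --- show that $f$ enters the computations of Section~\ref{sec:csggt} only through $\B_2 f$ and $\B_2^2 f$, both identically zero; hence the weaker assumption $f_2\in L^2(0,T;V)$ of \eqref{wave1} suffices. The remaining demand $M^*_k\ph\in L^\infty(0,T;\H^*)$, and likewise all of \eqref{hyp-Goh-tr2}, follow from the explicit formula $M^*_1\ph=(-A_V^{-1}(b_2 A_V\ph_1),\,b_2\ph_2)^\top$ together with the costate regularity $\ph\in C(0,T;\H)$, which already places $\ph_1\in C(0,T;V)$ and $\ph_2\in C(0,T;H)$. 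With \eqref{hyp-Goh-tr1} in hand and \eqref{compachyp} supplied by the Aubin argument of Lemma~\ref{wave-exist1}, part~(i) is delivered by Lemma~\ref{lem3}.

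The genuine difficulty, which I expect to be the crux of parts~(ii) and~(iii), is the compactness \eqref{comp-hyp-xi} of $w\mapsto\xi[w]$ from $L^2(0,T)$ into $L^2(0,T;\H)$. For the heat equation this was a \emph{standard} consequence of parabolic smoothing and Aubin's lemma, but the wave semigroup does not smooth: applied to the system \eqref{xi-wave-comp} along a sequence $w_k\rightharpoonup w$ in $L^2(0,T)$, Aubin's lemma controls the first component only as $\xi_{k,1}\to\xi_1$ in $L^2(0,T;H)$ --- \emph{not} in the $V$-norm that the energy space $\H=V\times H$ uses --- and dually $\xi_{k,2}\to\xi_2$ in $L^2(0,T;V^*)$; the bracket compactness \eqref{compachyp} adds $b_2\xi_{k,1}\to b_2\xi_1$ in $L^2(0,T;H)$. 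The plan is therefore not to force full strong convergence of $\xi_k$ in $L^2(0,T;\H)$, the $V$-norm of $\xi_1$ being genuinely uncontrolled, but to pass to the limit in $\hatQ$ term by term: the positive energy contribution $\int_0^T\|\xi\|_\H^2\,\dd t$ and the endpoint squares are handled by weak lower semicontinuity (they carry the right sign), while every genuinely indefinite cross term, regrouped by means of the explicit form of $M^*_1\ph$, reduces to a pairing of $\xi_1$ against an $H$-function (convergent by Aubin and by \eqref{compachyp}) or of $\xi_2$ against a $V$-function (convergent by the dual Aubin estimate, since $b_2\yh_1\in C(0,T;V)$). The most delicate contribution is the coupling of $\xi_2$ with the costate component $\ph_2\in C(0,T;H)$, for which $b_2\ph_2$ lies only in $H$; I would treat it by the structural substitution $\xi_{k,2}=\dot\xi_{k,1}-w_kb_2\yh_1$, which transfers one part into a purely $w$-quadratic term that is absorbed into the $R$-weighted form, leaving a contribution in $\xi_1$ that Aubin resolves. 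Controlling this term is where the bulk of the work will sit.

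Once the limit passage in $\hatQ$ is secured the rest is routine. For part~(ii), Corollary~\ref{cor2} yields $R\ge 0$ over singular arcs from the weak lower semicontinuity of $\hatQ_b$, with $R(t)=\|b_2\yh_2\|_H^2-2(\ph_2(t),b_2^2\yh_1)_H$ continuous by \eqref{wave1}--\eqref{wave:yd}. For part~(iii), the hypotheses \eqref{finite_structures}--\eqref{RposTBB} are assumed and \eqref{hyp-Goh-tr2} has been checked, so one applies Theorem~\ref{thm-sufcond}: strong convergence of the normalised sequence $(\wh_k,\hh_k)$ to zero is recovered from the Legendre character of the $R$-weighted form $\hatQ_b^1$ on the singular/bang set, contradicting its unit norm and thereby giving the equivalence between the coercivity \eqref{sufcondso} and the quadratic growth \eqref{sufcondqg}.
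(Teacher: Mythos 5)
Your part~(i) is fine and matches the paper: with $E_1:=\H$ the hypotheses \eqref{hyp-Goh-tr1} are immediate because $M_1$ is a bounded (zero-order) operator and $M_2=0$, and your observation that $f$ enters only through $\B_2 f=0$ and $\B_2^2 f=0$ is a sensible way to handle \eqref{hyp-Goh-tr1}(iv). The gap is in parts~(ii)--(iii). Your entire strategy there rests on the premise that the compactness \eqref{comp-hyp-xi} of $w\mapsto\xi[w]$ into $L^2(0,T;\H)$ is unavailable because ``the wave semigroup does not smooth'' and the $V$-norm of $\xi_1$ is ``genuinely uncontrolled''. This premise is false, and establishing \eqref{comp-hyp-xi} is exactly what the paper's proof of (ii) does. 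What you overlooked is that the source of the Goh-transformed system \eqref{xi-wave-comp} is not generic $\H$-valued data: it equals $w\,b^1_z=-w\,M_1\yh=(w\,b_2\yh_1,\,-w\,b_2\yh_2)$, and by Lemma \ref{lem:wave1} the optimal trajectory itself satisfies $\yh\in C(0,T;H^2_{0,1})$, one derivative above the energy space. The wave group does not create regularity, but it propagates it; consequently $\xi[w]$ is bounded in $C(0,T;H^2_{0,1})$ uniformly for $\|w\|_2$ bounded, while the equation itself bounds $\dot\xi[w]$ in $L^2(0,T;H\times H^{-1}(\Om))$. Since $H^2_{0,1}$ is compactly embedded in $\H$ and $\H\subset H\times H^{-1}(\Om)$ continuously, Aubin's lemma yields precisely \eqref{comp-hyp-xi}: strong convergence in $L^2(0,T;\H)$, including the $V$-norm of the first component. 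With this, (ii) is a direct application of Corollary \ref{cor2} and (iii) of Theorem \ref{thm-sufcond}, exactly as in the heat case.

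The gap is not cosmetic, because the workaround you sketch does not close. Both abstract results you want to use take \eqref{comp-hyp-xi} as a hypothesis, so you would have to reprove them under partial compactness, and this fails at two points. First, for (ii), the mechanism behind Corollary \ref{cor2} is a Hestenes-type subtraction: $\hatQ\ge 0$ on $P$ makes $\hatQ$ weakly l.s.c.; the $\xi$-dependent terms are weakly \emph{continuous} by compactness; hence $\hatQ_b=\hatQ-(\xi\text{-terms})$ is weakly l.s.c., which forces $R\ge 0$. If, as you propose, $\int_0^T\|\xi\|_\H^2\,\dd t$ is handled only by lower semicontinuity ``with the right sign'', the subtraction is no longer legitimate: along an oscillating $w_k\rightharpoonup 0$ supported where (hypothetically) $R<0$, the inequality $\hatQ\ge 0$ can be sustained by a strictly positive $\liminf_k\int_0^T\|\xi_k\|_\H^2\,\dd t$, and no contradiction with $R<0$ is ever reached. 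Second, your claim that every indefinite cross term reduces to a convergent pairing is wrong for the term $\int_0^T w\,\la b_2 A_V\ph_1,\xi_1\ra_V\,\dd t$ coming from $M_1^*\ph$: since \eqref{wave:yd} only gives $y_{dT}\in\H$, the costate satisfies $\ph_1\in C(0,T;V)$ and no better, so $b_2A_V\ph_1$ lies in $C(0,T;V^*)$, not in $H$; this term is then an integral of a product of two merely weakly convergent sequences ($w_k$ in $L^2(0,T)$ and $\xi_{k,1}$ in $L^2(0,T;V)$), whose limit cannot be identified with the partial compactness you allow yourself. The same objection hits Step 2 of Theorem \ref{thm-sufcond}, where the equality in \eqref{limOmega} requires weak continuity of $\hatQ_{T,a}$, not mere lower semicontinuity. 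All of these difficulties disappear once \eqref{comp-hyp-xi} is proved by the regularity-propagation argument above, which is the actual content of the paper's proof.
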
 

\begin{proof}
(i) Again, it suffices to check the hypotheses for Lemma \ref{lem3}.
Relations  \eqref{hyp-Goh-tr1}, where we choose 
$E_1 := \H$, follow from \eqref{wave:matrices}, and \eqref{wave:yd}, and the above computation of the commutator which contains no derivative.
In particular $M^*_1 \ph \in C(0,T;\H)$.
Point (i) follows. 

(ii) To apply Corollary \ref{cor2} we check the compactness hypothesis \eqref{comp-hyp-xi}.
We have
\be
w\mapsto \xi[w],\quad L^2(0,T) \rightarrow L^2(0,T;\H),
\ee
with $\xi[w]$ being the solution of \eqref{xi-wave-comp}. Since
$
 \xi[w]\in Z:=C(0,T; H^2_{0,1}(\Om))$
and 
$\dot \xi[w] \in L^2(0,T;H\times H^{-1}(\Om))$.
Since $H^2_{0,1}$ is compactly embedded in $\H$, 
and $\H \subset H\times H^{-1}(\Om)$
with continuous inclusion,
we conclude by Aubin's Lemma.
\\ (iii) We apply Theorem \ref{thm-sufcond},
which assumes hypothesis \eqref{hyp-Goh-tr2}, 
which is satisfied in our present setting. 
\end{proof}

\begin{remark}
 As for the heat equation the framework can be extended to more general differential operators $\A_\H$ of type \eqref{general_diff_op}.
\end{remark}

\bibliographystyle{amsplain}
\bibliography{lit,hjb}
\end{document}